\documentclass[10pt, reqno]{amsart}
\pdfoutput=1 
\usepackage{amsmath, amssymb, amsthm, enumerate, bbm}
\usepackage[colorlinks, linkcolor=blue, citecolor=magenta, hypertexnames=false,backref]{hyperref} 
\usepackage[dvipsnames]{xcolor}

\usepackage{graphicx}
\usepackage{booktabs}
\usepackage[font=small]{caption} 
\usepackage[flushleft]{threeparttable}
\usepackage{mathtools, breqn}
\usepackage{url}

\def\spine{1.1in}
\usepackage[
heightrounded,
top=1in,
bottom=1.3in,
inner=\spine,
outer=\spine 
]{geometry} %

\usepackage[final, expansion=true, protrusion=true, spacing=true, kerning=true]{microtype}
\microtypecontext{spacing= nonfrench}


\def\C{\mathbb C}

\def\supp{\operatorname{supp}}
\def\theta{\vartheta}

\numberwithin{equation}{section}

\newtheorem{theorem}{Theorem}[section]
\newtheorem{lemma}[theorem]{Lemma}

\newtheorem{proposition}[theorem]{Proposition}
\newtheorem{corollary}[theorem]{Corollary}
\newtheorem{definition}[theorem]{Definition}
\theoremstyle{definition}

\title{Potential Theory with Multivariate Kernels}
\date{\today}
\author[D. Bilyk]{Dmitriy Bilyk}
\author[D. Ferizovi\'{c}]{Damir Ferizovi\'{c}}
\author[A. Glazyrin]{Alexey Glazyrin}
\author[R. Matzke]{Ryan Matzke}
\author[J. Park]{Josiah Park}
\author[O. Vlasiuk]{Oleksandr Vlasiuk}
\address{School of Mathematics, University of Minnesota, Minneapolis, MN 55455} 
\email{dbilyk@math.umn.edu}
\address{Institute of Analysis and Number Theory, Graz University of Technology, Graz, Austria} 
\email{damir.ferizovic@tugraz.at}
\address{School of Mathematical \& Statistical Sciences, The University of Texas Rio Grande Valley, Brownsville, TX 78500}
\email{alexey.glazyrin@utrgv.edu}
\address{School of Mathematics, University of Minnesota, Minneapolis, MN 55455} 
\email{matzk053@umn.edu}
\address{Department of Mathematics, Texas A\&M University, College Station, TX 778430}
\email{j.park@math.tamu.edu}
\address{Department of Mathematics, Florida State University, Tallahassee, FL 32306}
\email{ovlasiuk@fsu.edu}
\subjclass[2000]{Primary 52A40, 52C17; Secondary 41A05}


\thanks{The authors thankfully acknowledge the support of this research:  NSF grant DMS 1665007 and the Simons Foundation collaboration grant  for mathematicians 712810 (D.~Bilyk),  the Austrian Science Fund (FWF): F5503 ``Quasi-Monte Carlo Methods'' and FWF:  W1230 ``Doctoral School Discrete Mathematics'', and the Austrian Marshall Plan Foundation (D.~Ferizovi\'{c}),   the NSF Graduate Fellowship 00039202 and  the UMN Doctoral Dissertation Fellowship (R.~Matzke), AMS-Simons Travel Grant and Postdoctoral Travel Award from the FSU Office of Postdoctoral Affairs (O.~Vlasiuk), NSF grant no. NSF CCF-1934904 and NSF grant DMS-1600693 (J.~Park) }

\keywords{Potential energy minimization, optimal measures, positive definite kernels} 

\begin{document} 
\maketitle

\maketitle
\begin{abstract}
In the present paper we develop the theory of  minimization for  energies with multivariate kernels, i.e. energies, in which pairwise interactions are replaced by interactions between triples or, more generally, $n$-tuples of particles. Such objects, which arise naturally in various fields, present subtle differences and complications when compared to the classical two-input case.  We introduce appropriate analogues of conditionally positive definite kernels, establish a series of relevant   results in potential theory,  explore rotationally invariant energies on the sphere, 
and present a variety of interesting examples, in particular, some optimization problems in  probabilistic geometry which are related to multivariate versions of the Riesz energies. 
\end{abstract}
\tableofcontents

\section{Introduction and main results}

Numerous questions, which arise in such different disciplines as  discrete geometry, physics, signal processing, and many others,  can be reformulated as problems of minimization of  discrete or continuous  pairwise interaction energies, i.e. expressions of the type 
\begin{equation}\label{e.2ener}
\frac{1}{N^2} \sum_{x,y \in \omega_N}  K (x,y) \,\,\, \textup{ or }   \int_\Omega \int_\Omega K(x,y) \, d\mu (x) \, d\mu (y),
\end{equation}
where $\omega_N$ is a discrete set of $N$ points, $\mu$ is a Borel probability measure on the domain $\Omega$, and $K$ is the potential function describing the pairwise interaction. Perhaps one of the most celebrated examples of such problems is the 1904 Thomson problem, asking for an equilibrium distribution of $N$ electrons on the sphere, which is notoriously still open for most values of $N$ \cite{Th}. This and many other problems stimulated the study of such energies, which has now developed into a full-blown theory, see e.g. \cite{Bj,Fu,HS}, whose state-of-the-art is very well presented in a recent book \cite{BHS}.

While classical energies \eqref{e.2ener} model pairwise interactions between particles,  the present paper, in contrast,  initiates the study of  optimization  problems for more complicated energies, defined by interactions of triples, quadruples, or even  higher numbers of particles, i.e. energies of the type
\begin{align}\label{e.nener}
E_K (\omega_N) &  = \frac{1}{N^n} \sum_{ x_1,\dots,x_n   \in \omega_N}  K (x_1,...,x_n), \\  
\label{e.neneri} I_K (\mu) & =   \int_\Omega \dots \int_\Omega K (x_1,\dots,x_n) \, d\mu (x_1) \,\dots \,  d\mu (x_n),
\end{align}
with $n\ge 3$.  
Energies of this type arise naturally in various fields:

\begin{enumerate}[(i)]
\item  In different  branches of {\emph{physics}} (nuclear, quantum, chemical, condensed matter, material science etc.), it has been suggested  that, if the behavior of the system cannot be accurately modeled by two-body interactions,  more precise information may be obtained from three-body or many-body interactions. Such forces are observed among nucleons in atomic nuclei (three-nucleon force) \cite{Ze}, in carbon nanostructures \cite{MS}, crystallization of atomistic configurations \cite{FTh}, cold polar molecules in optical lattices \cite{BMZ}, interactions of solid and liquid forms of silicon \cite{StW}, interactions between atoms \cite{AT}, in ``perfect glass" potentials \cite{ZST}, and many other areas. 

\item  Energy integrals with   multivariate kernels defined in \eqref{e.neneri} play the role of polynomials on the space $\mathbb P (\Omega)$ of probability measures on $\Omega$ -- e.g., their linear span over all $n\in \mathbb N$ is dense in the space of continuous functions on $\mathbb P (\Omega)$, according to the Stone--Weierstrass theorem. Such functionals on the space of  measures appear  in optimal transport  \cite{Sa}  and mean field games \cite{L}.

\item A classical example of a three-input energy, coming from geometric measure theory,  is given by the total Menger curvature of a measure $\mu$ 
\begin{equation}
c^2 (\mu)  = \int_\Omega \int_\Omega \int_\Omega  c^2 (x,y,z) \,{ d\mu (x)\, d\mu (y) \, d\mu (z)}, 
\end{equation}
where  $c(x,y,z) = \frac{1}{R(x,y,z)}$ and $R(x,y,z)$ is the circumradius of the triangle $xyz$. This object plays an important role in the study  of the $L^2$-boundedness of the  Cauchy integral, analytic capacity, and uniform rectifiability \cite{D,MMV}. 

\item Some questions in {\emph{probabilistic geometry}} admit natural reformulations in terms of multi-input energies \eqref{e.nener} or  \eqref{e.neneri}. For example, assume that three points are chosen in a domain $\Omega$, e.g. $\Omega = \mathbb S^2$, independently at random, according to the probability distribution $\mu$. Which probability distribution maximizes the expected area of the triangle generated by these random points or the volume of the parallelepiped spanned by the random vectors? These quantities can be  written as energy integrals \eqref{e.nener}   with $n=3$, and higher dimensional versions of such questions call for energies with more inputs, which may be viewed as natural extensions of the classical Riesz energy.  Questions of this type are   discussed in  Section \ref{sec:minsigma} and are  explored in more detail in \cite{BFGMPV}. 

\item Energies with more than two inputs  akin to   \eqref{e.nener}  appear  in three-point bounds \cite{CW} and, more generally, $k$-point bounds \cite{DMOV,Mu1}  in  semidefinite programming \cite{BV}  -- a very fruitful method, which led to numerous breakthroughs in discrete geometry. 
A discussion of this method in the context of the multivariate energy optimization and, in particular, applications to the geometric problems described in Section \ref{sec:minsigma} can be  found in our follow-up work \cite{BFGMPV}. 

\item  Relations between the $L^2$-discrepancy and the two-input energies, in particular, the Stolarsky principle \cite{St},  are well known \cite{BDM,Sk}. In a similar spirit,  other $L^n$-norms of the discrepancy or ``number variance'' with integer values of $n$ lead to $n$-particle interaction energies \eqref{e.nener}. Some similar  ideas have been put forward in \cite{T}. 
\end{enumerate}

Despite the abundance of applications,  
there seems to have been no systematic development of a  general theory of multi-input energies, unlike the case of classical two-input  energies  which has been deeply and extensively explored. 
The present paper makes a first attempt to remedy this shortcoming and to study  the general properties of point configurations and measures, minimizing the multi-input  energies \eqref{e.nener}-\eqref{e.neneri}, and the relations between the structure of the multivariate kernel $K$ and the energy minimizers. This theory presents many intrinsic obstacles and is far from a straightforward generalization of the two-input case. In particular, in the spherical case $\Omega = \mathbb S^{d-1}$ with rotationally-invariant two-input kernels $K (x,y) = F (\langle x,y \rangle)$, classical Schoenberg's theory \cite{S} proves that the uniform surface measure $\sigma$ minimizes the energy integral in \eqref{e.2ener} if and only if the kernel $K$ is conditionally positive definite. However, in the multi-input case, such a characterization is still elusive: while we obtain various natural sufficient conditions for the surface measure $\sigma$ to minimize the energy \eqref{e.nener}  in Section \ref{sec:sph}, counterexamples presented in Section \ref{sec:PDK} show that none of them are necessary.

The outline of the paper is as follows. In Section \ref{sec:back} we introduce the notation and some of the main definitions, including the notion of $n$-positive definiteness.  In Section \ref{sec:fp} we  explore some  basic properties of multivariate energies. In particular, we analyze the connections between (conditional) positive definiteness of the kernel $K$, convexity of the energy functional $I_K (\mu)$, and arithmetic and geometric mean inequalities for the mixed energies. The meat of the paper, i.e. the results about minimizers of the $n$-input energies are concentrated in Sections \ref{sec:min}--\ref{sec:PDK}.  

Section \ref{sec:min} deals with  analogues of classical potential theoretic results \cite{Bj,BHS,Fu}, which provide certain  necessary (Theorem \ref{thm:Constant on Supp}) and sufficient (Theorem \ref{thm:Easy minimizers}) conditions for a measure $\mu$ to be a minimizer of the $n$-input energy integral in terms of the $(n-1)$-fold potential of the kernel $K$ with respect to $\mu$. Even though some of these results by themselves are clear-cut generalizations of standard statements for two-input energies, they yield several interesting consequences in the $n$-input case.  In particular, Theorem \ref{thm:lowtohigh} states that, under some additional assumptions (e.g., if $K$ is $n$-positive definite), for any $1\le k \le n-2$, if the measure $\mu$ minimizes the $(n-k)$-input energy $I_U$, where $U$ is the $k$-fold integral of $K$ with respect to $\mu$, then $\mu$ also minimizes the $n$-input energy $I_K$. This statement allows one to simplify  proving  that a given measure is a minimizer of a multi-input energy by considering energies with a  lower number of  inputs. A partial converse to  Theorem \ref{thm:lowtohigh}, for $k=n-2$, is given in Theorem \ref{thm:highto2}.  In addition, in Lemma \ref{lem:loctoglob}, we show that,  for  $n$-positive definite kernels, every local minimizer of $I_K$ is necessarily a global minimizer. 

In Section \ref{sec:sph} we adapt the methods of Section \ref{sec:min} to energies with rotationally invariant kernels on the sphere $\mathbb S^{d-1}$, where  symmetries allow for a more delicate analysis, and one has a natural candidate for a minimizer: the uniform surface measure $\sigma$.  Theorem \ref{thm:3pt sphere} states  that energies with conditionally $n$-positive definite rotationally invariant kernels on the sphere are minimized by the surface measure $\sigma$ (without any additional assumptions). As mentioned above, it turns out that, in contrast to  the classical case $n=2$, conditional $n$-positive definiteness is not necessary for $\sigma$ to minimize the $n$-input energy, which is shown by examples presented in Propositions \ref{prop:notpd1} and \ref{prop:notpd2}. Nevertheless, Theorem \ref{thm:3pt sphere}  allows one to prove that $\sigma$ minimizes a variety of interesting energies, which did not seem to be accessible by different methods, see e.g. Corollary \ref{cor:analytic(uvt)}. In Theorem \ref{t.sph} we obtain  very close necessary and sufficient conditions for $\sigma$ to be a {\emph{local}} minimizer of  the $n$-input energy $I_K$  in terms of the minimization properties of the two-input energy with the kernel given by the $(n-2)$-fold integral of $K$ (or the conditional positive definiteness of this kernel).  We also conjecture these are the correct  conditions for $\sigma$ to be a {\emph{global}} minimizer of $I_K$.

Section \ref{sec:PDK} is dedicated to constructing various classes of $n$-positive definite kernels, proving that certain kernels of interest are (conditionally) $n$-positive definite, as well as exhibiting some naturally arising $3$-input kernels on the sphere which are not conditionally $3$-positive definite, yet the corresponding energies are minimized by the surface measure $\sigma$. These examples are presented in Propositions \ref{prop:convnotpd}, \ref{prop:notpd1}, and \ref{prop:notpd2}. The first one is closely related to the semidefinite programming method as presented in \cite{BV}, while the last two are geometric. The latter kernels are studied in Section \ref{sec:minsigma} which addresses some problems from probabilistic discrete geometry. Their main objects may be viewed as multi-input analogues of the classical Riesz energies. In particular, we  show that if three random vectors are  chosen in the sphere $\mathbb S^{d-1}$ independently according to the probability distribution $\mu$, then the expected volume squared of the tetrahedron generated by these vectors  (Theorem \ref{thm:vol squared max})  
as well as the square of the area of the triangle defined by these points  (Theorem \ref{thm: triangle area squared max}) are maximized if the distribution is uniform, i.e. $\mu = \sigma$.  A more detailed study of such geometric questions is conducted by the authors in \cite{BFGMPV}. \\ 

While many of the results presented in this paper hold (or can be extended) to a larger class of kernels (e.g.,  bounded lower semi-continuous, or even singular kernels), given that this is the first effort to establish a theory of multi-input energies, for the sake of brevity and clarity of the exposition, we shall only consider continuous kernels on compact metric spaces in this paper. We shall also restrict our attention to symmetric $n$-input kernels, i.e. functions invariant with respect to any permutation of variables. These assumptions are implicitly present in all of the results presented below, even if not stated explicitly. 



\section{Background and definitions}\label{sec:back}


In what follows, we always assume that $(\Omega, \rho)$ is a compact metric space, $ n \in \mathbb{N}\setminus\{1\}$, and the kernel  $K: \Omega^n \rightarrow \mathbb{R}$ is  continuous and symmetric, i.e. for any permutation $\pi \in S_n$ and $x_1, ..., x_n \in \Omega$, $K( x_1, ..., x_n) = K( x_{\pi(1)}, ..., x_{\pi(n)})$. We denote by $\mathcal{M}(\Omega)$ the set of finite signed Borel measures on $\Omega$, and by $\mathbb{P}(\Omega)$ the set of Borel probability measures on $\Omega$. Let $\omega_N = \{ x_1, x_2, ..., x_N\}$ be an $N$-point configuration (multiset) in $\Omega$ for $N \geq n$. We define the discrete $K$-energy of $\omega_N$ to be 
\begin{equation}\label{eq:DiscreteEnergyDef}
E_K(\omega_N) :=  \frac{1}{N^n} \sum_{j_1=1}^{N} \cdots \sum_{j_n = 1}^{N} K(x_{j_1}, ..., x_{j_n}),
\end{equation}
and the minimal discrete $N$-point $K$-energy of $\Omega$ as
\begin{equation}
\mathcal{E}_K(\Omega,N) :=  \inf_{\omega_N \subseteq \Omega} E_K(\omega_N). 
\end{equation}

Let $\mu_1, ..., \mu_n \in \mathcal{M}(\Omega)$, then we define their mutual energy as 
\begin{equation}\label{eq:ContEnergyDef}
I_K(\mu_1, ..., \mu_n) = \int_{ \Omega} \cdots \int_{\Omega} K(x_1, ..., x_n) d\mu_1(x_1) \cdots  d\mu_n(x_n),
\end{equation} 
and, for $j < n$, the $j$-th potential function as
\begin{equation}
U_K^{\mu_1, ..., \mu_j}( x_{j+1}, ..., x_n) = \int_{ \Omega} \cdots \int_{\Omega} K(x_1, ..., x_n) d\mu_1(x_1) \cdots  d\mu_j(x_j).
\end{equation}

Note that since we are working with continuous $K$, the energy is well defined for all finite signed Borel measures.
We will abuse notation by writing $\mu^k$ if $k$ of the measures are the same and define the \textbf{$K$-energy functional} on $\mathcal{M}(\Omega)$ by
\begin{equation}\label{eq:energyintegral}
 I_K(\mu) = I_K(\mu^n) = I_K( \mu, ..., \mu).
 \end{equation}
The definitions of discrete  \eqref{eq:DiscreteEnergyDef} and continuous \eqref{eq:energyintegral} energies are compatible in the sense that 
\begin{equation}
E_{K}(\omega_N) = I_{K}(\mu_{\omega_N}), \; \; \; \; \text{ where } \mu_{\omega_N} = \frac{1}{N} \sum_{j =1}^{N} \delta_{x_{j}}
\end{equation}
and due to the weak-$*$ density of the linear span of Dirac masses in $\mathbb{P}(\Omega)$
\begin{equation}
\lim_{ N \rightarrow \infty} \mathcal{E}_{K}(\Omega, N) = \inf_{\mu \in \mathbb{P}(\Omega)} I_{K}(\mu).
\end{equation}

We now recall the classical  notion of positive definiteness for two-input kernels, which plays an extremely important role in energy optimization problems and   which we seek to generalize to $n$-input kernels. We state the definition in the form which is most relevant to our exposition. 
\begin{definition}\label{def:2pd}
A kernel $K: \Omega^2 \rightarrow \mathbb{R}$ is called  positive definite if for every finite signed Borel measure $\nu \in \mathcal{M}(\Omega)$, the energy integral satisfies  $I_K(\nu) \geq 0$. 

If  the inequality $I_K(\nu) \geq 0$ holds for all $\nu \in \mathcal{M}(\Omega)$ satisfying $\nu(\Omega) = 0$, we call the kernel conditionally positive definite. 
\end{definition}

A more standard way of stating  the definition of positive definiteness of $K$ is by requiring that  for all   $N \in \mathbb{N}$ and $ x_1, ..., x_N \in \Omega$, the matrix
$ [ K (x_i, x_j)]_{0 \leq i,j \leq N}$ is positive semi-definite, i.e. $$I_K \bigg(\sum_{i=1}^N c_i \delta_{x_i } \bigg) =  \sum_{i,j=1}^N  K( x_i,x_j) c_i c_j \ge 0$$ for all $c_1,\dots,c_N \in \mathbb R$.   Since $K$ is continuous, this is clearly equivalent to Definition \ref{def:2pd} due to weak-$*$ density of discrete measures.

We  extend this notion to $n$-input kernels by demanding that, if one fixes arbitrary values of all but two variables, the resulting two-input kernel is positive definite in the classical sense. 
For every $m < n$ and $z_1, z_2, ..., z_{m} \in \Omega$, we define
\begin{equation}
K_{z_1, z_2, ..., z_{m}} (x_1,..., x_{n-m}) : = K( z_1, ..., z_{m}, x_1, ..., x_{n-m}).
\end{equation}

\begin{definition}\label{def:pd}
We shall  say that a continuous symmetric kernel $K: \Omega^n \rightarrow \mathbb{R}$ is (conditionally)  $n$-positive definite if, for all $z_1, z_2, ..., z_{n-2} \in \Omega$, the two-input kernel $ K_{z_1,..., z_{n-2}}$ is (conditionally) positive definite in the sense of Definition \ref{def:2pd}. 
\end{definition}



We would like to emphasize that this definition relies more on the pointwise  two-variable structure, rather than the full set of variables. In particular, it does not have any connection to positive definite tensors \cite{Q}. Thus, it may appear that the name {\emph{$n$-positive definite}} might be somewhat misleading. However, from the point of view of energy optimization, which is the main theme of this paper, this nomenclature seems absolutely justified. Indeed, in various statements about minimal energy (e.g.,  Theorem \ref{thm:Easy minimizers}, Corollary \ref{cor:Easy minimizers}, Theorem \ref{thm:3pt sphere}), this condition naturally replaces positive definiteness of classical two-input kernels. In addition, non-symmetric multivariate kernels of similar flavor have been considered in the context of $k$-point bounds in semidefinite programming \cite{DMOV,Mu1}.
The class of $n$-positive definite  kernels  is rather  rich: throughout the text, in particular, in Section \ref{sec:PDK}, we present numerous examples of    functions with this property.

We immediately observe that this property is inherited by kernels with a lower number of inputs, which are obtained as  potentials of $K$ with respect to arbitrary probability measures.

\begin{lemma}\label{lem:lesspd}
Let $n>2$ and assume that  $K$ is (conditionally) $n$-positive definite. Then for every $\mu \in \mathbb{P}(\Omega)$, the potential  $U^{\mu}_{K}(x_1, \dots, x_{n-1}) $ is (conditionally) $(n-1)$-positive definite. 
\end{lemma}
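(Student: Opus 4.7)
The plan is a direct unpacking of definitions together with a Fubini-type interchange of integrals, exploiting the fact that $\mu$ is a nonnegative measure.

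First I would fix an arbitrary choice of $n-3$ points $z_1, \ldots, z_{n-3} \in \Omega$ and an arbitrary signed measure $\nu \in \mathcal{M}(\Omega)$ (in the conditional case, restricted by $\nu(\Omega)=0$). By Definition \ref{def:pd}, proving $(n-1)$-positive definiteness of $U^\mu_K$ amounts to verifying that
\begin{equation*}
J := \int_\Omega \int_\Omega (U^\mu_K)_{z_1,\ldots,z_{n-3}}(x,y)\, d\nu(x)\, d\nu(y) \ge 0.
\end{equation*}
Unfolding the potential, using symmetry of $K$, and applying Fubini (permissible since $K$ is continuous on the compact space $\Omega^n$ and all the measures are finite), this rewrites as
\begin{equation*}
J = \int_\Omega \left[\,\int_\Omega \int_\Omega K_{z_1,\ldots,z_{n-3},w}(x,y)\, d\nu(x)\, d\nu(y)\right] d\mu(w).
\end{equation*}

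Next I would invoke the hypothesis. Since $K$ is (conditionally) $n$-positive definite, for every fixed $w \in \Omega$ the two-input kernel $K_{z_1,\ldots,z_{n-3},w}$ is (conditionally) positive definite in the classical sense of Definition \ref{def:2pd}. Hence, in the unconditional case the inner bracketed integral is nonnegative for every $w$; in the conditional case, the assumption $\nu(\Omega)=0$ guarantees the same conclusion.

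Finally, because $\mu$ is a probability measure, hence nonnegative, integrating a pointwise nonnegative quantity against $d\mu(w)$ yields $J \ge 0$, completing the argument. There is no real obstacle here: the only thing worth flagging is the role of the positivity of $\mu$, which is what allows us to pass from pointwise nonnegativity of the inner integrand to nonnegativity of the outer integral — this would fail for a general signed measure $\mu$, and it is the one place where the hypothesis $\mu \in \mathbb{P}(\Omega)$ is used.
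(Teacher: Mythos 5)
Your proof is correct and is essentially identical to the paper's: both unfold the potential, apply Fubini to move the $\mu$-integration to the outside, invoke the (conditional) positive definiteness of $K$ with all but two variables fixed, and conclude using the nonnegativity of $\mu$. Your explicit remark about where positivity of $\mu$ enters is a nice touch that the paper leaves implicit.
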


\begin{proof}
Let $\nu$ be a finite signed Borel measure on $\Omega$ (with $\nu(\Omega) =0$ if  $K$ is conditionally $n$-positive definite). Then by Fubini--Tonelli
\begin{align*}
I_{\big( U_{K}^{\mu} \big)_{{z_2}, \dots, {z_{n-2}}}}(\nu) & = \int_{\Omega} \int_{\Omega} \,\, \int_{\Omega}  K(z_1, z_2, \dots, z_{n-3}, z_{n-2}, x, y ) d\mu(z_1)\,\,  d\nu(x)  d \nu (y)  \\
& = \int_{\Omega}\,\,  \int_{\Omega} \int_{\Omega}  K_{z_1 , ..., z_{n-2}}(x, y) d\nu(x) d\nu(y) \,\,  d \mu(z_{1})  \geq 0,
\end{align*}
since $ K_{z_1, ..., z_{n-2}}$ is (conditionally)  positive definite for all $z_1, ..., z_{n-2} \in \Omega$.
\end{proof}

As a corollary of Lemma \ref{lem:lesspd}, we observe that if $K: \Omega^n \rightarrow \mathbb{R}$ is (conditionally) $n$-positive definite, then for all $\mu_1, ..., \mu_k \in \mathbb{P}(\Omega)$, with $k \leq n-2$, $U_K^{\mu_1, ..., \mu_k}( x_{k+1}, ..., x_{n})$ is (conditionally) $(n-k)$-positive definite.


Naturally, (conditionally) $n$-positive definite kernels enjoy the same basic properties as their classical two-variable counterparts. 

\begin{lemma}\label{lem:Schur 3pt}
If $K$ and $L$ are $n$-positive definite, then so are $K+ L$ and $KL$. If $K_1, K_2, ...$ are $n$-positive definite and $\lim_{n \rightarrow \infty} K_n = K$ uniformly, then $K$ is $n$-positive definite. The statements about the sum and the limit (but not about the product) continue to  hold if we replace $n$-positive definite with conditionally $n$-positive definite.
\end{lemma}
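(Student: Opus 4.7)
The plan is to reduce each assertion to the classical two-input case by fixing variables. By Definition \ref{def:pd}, $K$ is (conditionally) $n$-positive definite exactly when, for every $z_1, \ldots, z_{n-2} \in \Omega$, the two-input kernel $K_{z_1, \ldots, z_{n-2}}$ is (conditionally) positive definite. Since the operation of fixing a block of variables commutes with pointwise sums, pointwise products, and uniform limits, one has $(K+L)_{z_1,\ldots,z_{n-2}} = K_{z_1,\ldots,z_{n-2}} + L_{z_1,\ldots,z_{n-2}}$, $(KL)_{z_1,\ldots,z_{n-2}} = K_{z_1,\ldots,z_{n-2}} \cdot L_{z_1,\ldots,z_{n-2}}$, and $(K_m)_{z_1,\ldots,z_{n-2}} \to K_{z_1,\ldots,z_{n-2}}$ uniformly on $\Omega^2$ whenever $K_m \to K$ uniformly on $\Omega^n$. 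It therefore suffices to verify each of the three preservation properties for two-input kernels.

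For the sum, linearity of the energy integral in the kernel gives immediately $I_{K+L}(\nu) = I_K(\nu) + I_L(\nu) \geq 0$ for any $\nu \in \mathcal{M}(\Omega)$, and the same argument applied to measures with $\nu(\Omega) = 0$ handles the conditional case. For uniform limits, continuity of $K$ (inherited from the $K_m$) and the bound $|I_K(\nu) - I_{K_m}(\nu)| \leq \|K - K_m\|_\infty \, |\nu|(\Omega)^2$ show that $I_K(\nu) = \lim_m I_{K_m}(\nu) \geq 0$ for every fixed signed Borel measure $\nu$; restricting again to $\nu(\Omega) = 0$ yields the conditional version.

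For the product, the essential input is the classical Schur (Hadamard) product theorem for positive semidefinite matrices: if $A$ and $B$ are PSD then so is their entrywise product $A \circ B$, which one sees by writing $A = \sum_r v_r v_r^{T}$ and noting that $A \circ B = \sum_r D_{v_r} B D_{v_r}$ with $D_{v_r} = \operatorname{diag}(v_r)$ is a sum of PSD matrices. Applied to $[K(x_i,x_j)]$ and $[L(x_i,x_j)]$, this yields $I_{KL}(\nu) \geq 0$ for every discrete signed measure $\nu$, and the case of a general $\nu \in \mathcal{M}(\Omega)$ follows from continuity of $KL$ together with weak-$*$ density of discrete measures, exactly as in the equivalence noted after Definition \ref{def:2pd}. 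I expect no real obstacle: every ingredient is classical. The only point worth flagging is that the diagonal-similarity argument for Schur's theorem requires genuine positive semidefiniteness of $A$ and $B$, not the weaker conditional version; indeed the product of two conditionally positive definite kernels need not be conditionally positive definite, which is exactly why the final sentence of the lemma correctly excludes this case.
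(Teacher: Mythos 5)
Your proof is correct and follows exactly the route the paper indicates: reduce to the two-input case by fixing $z_1,\dots,z_{n-2}$, handle the sum and the uniform limit by linearity and a $\|K-K_m\|_\infty$ estimate, and invoke the classical Schur product theorem for $KL$ (the paper leaves the details as ``straightforward'' and cites Schur for the product). Your closing remark about why the Schur argument genuinely requires positive semidefiniteness rather than the conditional version is also consistent with the paper's counterexample-based observation.
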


The proof of this lemma is straightforward. The statement about the product $KL$ follows from the classical Schur product theorem, and positive definiteness in this statement cannot be replaced by conditional positive definiteness (since, for example, a negative constant is a conditionally $n$-positive definite function). 




\section{First principles}\label{sec:fp}

In this section we explore some basic properties related to (conditional) $n$-positive definiteness, such as inequalities for mixed energies and  convexity of the energy functionals, as well as connections between these notions.  All the kernels in this section are assumed to be continuous and symmetric.

\subsection{Bounds on mutual energies}

In the classical case,  mixed energies can be bounded by  averages of energies of each individual measure. We refer the reader to Chapter 4 of  \cite{BHS} for details. 

\begin{lemma} \label{lem:BHS2inputAInequality} \label{lem:BHS2inputGInequality}

Suppose $K$ is a conditionally positive definite kernel on $\Omega^2$. Then for every pair of Borel probability measures $\mu_1$ and $\mu_2$  on $\Omega$, 
	the mutual energy $I_K(\mu_1,\mu_2)$
	 satisfies
	$$I_K(\mu_1,\mu_2)\leq \frac12 \big( I_K(\mu_1)+I_K(\mu_2) \big) . $$
	Furthermore, if $K$ is positive definite, 
	then 
	$$I_K(\mu_1,\mu_2)\leq \sqrt{I_K(\mu_1) I_K(\mu_2)}. $$
\end{lemma}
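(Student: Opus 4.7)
The overall strategy is the standard one: form an appropriate signed measure $\nu$ from $\mu_1$ and $\mu_2$, apply (conditional) positive definiteness to conclude $I_K(\nu) \geq 0$, and then expand the bilinear expression $I_K(\nu) = I_K(\nu,\nu)$ using symmetry of $K$ and multilinearity of mutual energy. Both inequalities fall out of this template; only the choice of $\nu$ changes.

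For the arithmetic mean inequality, the plan is to take $\nu = \mu_1 - \mu_2$. Since $\mu_1, \mu_2 \in \mathbb{P}(\Omega)$, we have $\nu(\Omega) = 1 - 1 = 0$, so conditional positive definiteness gives $I_K(\nu) \geq 0$. Expanding by bilinearity and symmetry,
\begin{equation*}
0 \leq I_K(\mu_1 - \mu_2) = I_K(\mu_1) - 2 I_K(\mu_1, \mu_2) + I_K(\mu_2),
\end{equation*}
which rearranges to the claimed bound. Note that symmetry of $K$ is essential here to identify $I_K(\mu_1,\mu_2)$ with $I_K(\mu_2,\mu_1)$.

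For the geometric mean inequality, I would imitate the proof of the Cauchy--Schwarz inequality for inner products. Set $\nu = t\mu_1 - s\mu_2$ for arbitrary real parameters $s,t$; since $K$ is (unconditionally) positive definite, $I_K(\nu) \geq 0$ for every choice of $s,t$, yielding the quadratic inequality
\begin{equation*}
t^2 I_K(\mu_1) - 2 st \, I_K(\mu_1,\mu_2) + s^2 I_K(\mu_2) \geq 0.
\end{equation*}
Since $I_K(\mu_1), I_K(\mu_2) \geq 0$ (again by positive definiteness applied to $\mu_1$ and $\mu_2$ themselves), either both energies vanish, in which case the quadratic being nonnegative in $s,t$ forces $I_K(\mu_1,\mu_2) = 0$ and the inequality is trivial, or at least one is positive and we may substitute $t = \sqrt{I_K(\mu_2)}$, $s = \sqrt{I_K(\mu_1)}$ to obtain $2 I_K(\mu_1) I_K(\mu_2) \geq 2 \sqrt{I_K(\mu_1) I_K(\mu_2)} \, I_K(\mu_1,\mu_2)$, which gives the desired bound after dividing.

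There is no real obstacle in these arguments; they are linear-algebraic in flavor and rest entirely on the algebraic identity for $I_K(\nu,\nu)$ combined with the definition of (conditional) positive definiteness. The only minor subtlety is the degenerate case $I_K(\mu_1) = I_K(\mu_2) = 0$ in the geometric mean bound, which has to be handled separately (or absorbed by invoking the discriminant of the quadratic form $(s,t) \mapsto I_K(t\mu_1 - s\mu_2)$).
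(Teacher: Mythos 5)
Your proof is correct. The paper does not actually prove this lemma---it is stated as a known fact with a pointer to Chapter~4 of \cite{BHS}---and your argument is exactly the classical one: expand $I_K(\mu_1-\mu_2)\ge 0$ (legitimate under conditional positive definiteness since $(\mu_1-\mu_2)(\Omega)=0$) for the arithmetic bound, and run the Cauchy--Schwarz argument on $\nu=t\mu_1-s\mu_2$ for the geometric one. The only spot to tighten is the case where exactly one of $I_K(\mu_1)$, $I_K(\mu_2)$ vanishes, where your substitution yields only $0\ge 0$; as you already note, passing instead to the discriminant condition $I_K(\mu_1,\mu_2)^2\le I_K(\mu_1)\,I_K(\mu_2)$ for the positive semidefinite binary quadratic form covers all cases uniformly.
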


These inequalities can be extended to $n$-input energies with (conditionally) $n$-positive definite kernels.

\begin{lemma}\label{lem:energyAInequality}\label{lem:energyGInequality}
Suppose $K$ is a  conditionally $n$-positive definite kernel on $\Omega^n$. Then for every $n$-tuple of Borel probability measures $\mu_1,\ldots,\mu_n$  on $\Omega$, the mutual energy $I_K(\mu_1,\ldots,\mu_n)$ satisfies
\begin{equation}\label{eq:AMineq}
I_K(\mu_1,\ldots,\mu_n)\leq  \frac{1}{n}\sum_{j=1}^nI_K(\mu_j). 
\end{equation}
If, moreover, $K$ is $n$-positive definite, 
\begin{equation}\label{eq:GMineq}
I_K(\mu_1,\ldots,\mu_n) \leq  \prod_{j=1}^{n} \sqrt[n]{ I_K(\mu_j)}. 
\end{equation}
\end{lemma}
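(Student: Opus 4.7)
The plan is to prove both inequalities by induction on $n$, with the base case $n=2$ supplied by Lemma \ref{lem:BHS2inputAInequality}. For the inductive step, I would factor $K$ through two of its potentials via Lemma \ref{lem:lesspd}.

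\textbf{AM inequality.} Set $\tilde K := U_K^{\mu_n}$, which is conditionally $(n{-}1)$-positive definite by Lemma \ref{lem:lesspd}. Applying the inductive hypothesis to $\tilde K$ with the measures $\mu_1,\ldots,\mu_{n-1}$ and unwinding the definitions yields
\[
I_K(\mu_1, \ldots, \mu_n) \leq \frac{1}{n-1}\sum_{j=1}^{n-1} I_K(\mu_j^{n-1}, \mu_n).
\]
To control each $I_K(\mu_j^{n-1}, \mu_n)$, fix $j$ and, for each $1 \le k \le n-1$, introduce the two-input kernel $\hat K_k := U_K^{\mu_j^{n-k-1},\,\mu_n^{k-1}}$, which is conditionally positive definite after $n-2$ applications of Lemma \ref{lem:lesspd}. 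The two-input AM inequality applied to $\hat K_k$ yields $b_k \le \tfrac{1}{2}(b_{k-1} + b_{k+1})$ for the sequence $b_k := I_K(\mu_j^{n-k}, \mu_n^k)$, $k=0,\ldots,n$. An integer-valued midpoint-convex sequence is convex, so $b_1 \le \tfrac{n-1}{n}b_0 + \tfrac{1}{n}b_n = \tfrac{n-1}{n}I_K(\mu_j) + \tfrac{1}{n}I_K(\mu_n)$. Substituting back and collecting terms produces exactly $\tfrac{1}{n}\sum_{j=1}^n I_K(\mu_j)$.

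\textbf{GM inequality.} The argument mirrors the AM case with arithmetic replaced by geometric averaging. Now $\tilde K = U_K^{\mu_n}$ is (unconditionally) $(n-1)$-positive definite, so the inductive hypothesis gives directly
\[
I_K(\mu_1, \ldots, \mu_n) \leq \prod_{j=1}^{n-1} I_K(\mu_j^{n-1}, \mu_n)^{1/(n-1)}.
\]
Each factor is bounded via two-input Cauchy--Schwarz on the (unconditionally) positive definite kernel $\hat K_k$, producing $b_k^2 \le b_{k-1}b_{k+1}$, so $\log b_k$ is midpoint-convex and hence $b_1 \le I_K(\mu_j)^{(n-1)/n}\,I_K(\mu_n)^{1/n}$. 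Plugging this in and simplifying the resulting product telescopes to $\prod_{j=1}^n I_K(\mu_j)^{1/n}$.

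The main obstacle I anticipate is the degenerate case of the GM bound where some $I_K(\mu_j)$ vanishes: the logarithmic convexity step requires $b_k > 0$, and this has to be handled separately, either by a short direct argument showing $I_K(\mu_1,\ldots,\mu_n) \le 0$ when an energy vanishes (using iterated Cauchy--Schwarz on the two-input potentials), or by perturbing $\mu_j \leadsto (1-\varepsilon)\mu_j + \varepsilon\tau$ for a probability measure $\tau$ with $I_K(\tau) > 0$ and passing to the limit. A minor bookkeeping point is the free use of the symmetry of $K$ to identify $I_{\tilde K}(\mu_j^{n-1})$ with $I_K(\mu_j^{n-1},\mu_n)$ irrespective of which slot the $\mu_n$ occupied in $\tilde K$.
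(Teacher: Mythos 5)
Your proof is correct, but it takes a genuinely different route from the paper's. Both arguments run by induction on $n$ with base case Lemma \ref{lem:BHS2inputAInequality}, and both begin the inductive step with the same reduction through $U_K^{\mu_n}$ (the paper peels off $\mu_1$ instead of $\mu_n$, which is immaterial by symmetry), arriving at the need to bound the ``two-measure'' mixed energies $I_K(\mu_j^{n-1},\mu_n)$. From there you diverge. The paper applies the inductive hypothesis twice more to $I_K(\mu_1,\mu_{j+1}^k)$, producing a self-referential inequality in which that same quantity appears on both sides with different exponents, and then solves for it by rearranging. You instead introduce the full sequence $b_k = I_K(\mu_j^{n-k},\mu_n^k)$, apply the two-input inequality to the iterated potentials $U_K^{\mu_j^{n-k-1},\mu_n^{k-1}}$ to get midpoint-(log-)convexity, and interpolate between the endpoints $b_0$ and $b_n$ via discrete convexity. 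Your version buys two things: the AM and GM cases really are visibly parallel (the paper only proves the GM case and asserts the AM case is ``verbatim''), and the positivity issues are forced into the open --- the paper's rearrangement step divides by powers of $I_K(\mu_1,\mu_{j+1}^k)$ and takes $k$-th roots without comment, whereas your setup makes it clear that all $b_k\ge 0$ (each intermediate $b_k$ is a diagonal energy of one of the positive definite two-input potentials). The degenerate case you flag is even easier than you anticipate: since $b_k^2\le b_{k-1}b_{k+1}$ and all $b_k\ge 0$, the vanishing of any $b_k$ with $k<n$ cascades down to $b_1=0$, and the target inequality $b_1\le b_0^{(n-1)/n}b_n^{1/n}$ holds trivially, so no perturbation is needed. (One slip of the pen: ``integer-valued midpoint-convex sequence'' should read ``real sequence indexed by integers''; for finite sequences midpoint convexity does imply convexity, so the step is sound.) The cost of your approach is only that it is somewhat longer to write out; the paper's bootstrapping is more compact but more opaque.
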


\begin{proof}
We only prove \eqref{eq:GMineq}, as one could repeat the proof below verbatim, with the multiplicative notation replaced by the additive, to arrive at  \eqref{eq:AMineq} (when $K$ is $n$-positive definite, it would also follow from the arithmetic--geometric mean inequality). 


By Lemma \ref{lem:BHS2inputAInequality}, our claim holds for $n = 2$. Now, suppose our claim holds for some $k \geq 2$, and let $\mu_1, ..., \mu_{k+1} \in \mathbb{P}(\Omega)$. Lemma \ref{lem:lesspd} tells us that for $1 \leq j \leq k+1$, $U_K^{\mu_j}$ is $k$-positive definite, so by our inductive hypothesis
\begin{equation}\label{eq:kthrootineq}
I_K( \mu_1 , ..., \mu_{k+1}) = I_{U_K^{\mu_1}}( \mu_2 , ..., \mu_{k+1}) \leq  \prod_{j=1}^{k} \sqrt[k]{ I_K( \mu_1, \mu_{j+1}^k)}.
\end{equation}

Again using the inductive hypothesis, and the fact that $K$ is symmetric, we have that for $1 \leq j \leq k$,
\begin{align*}
I_K( \mu_1, \mu_{j+1}^k) & = I_K( \mu_{j+1},  \mu_1, \mu_{j+1}^{k-1}) \\
& \leq \sqrt[k]{I_K( \mu_{j+1}, \mu_{1}^k)} \sqrt[k]{I_K( \mu_{j+1})^{k-1}} \\
& = \sqrt[k]{I_K( \mu_{1},  \mu_{j+1}, \mu_{1}^{k-1})} \sqrt[k]{I_K( \mu_{j+1})^{k-1}}\\
& \leq \sqrt[k^2]{I_K( \mu_{1})^{k-1}} \sqrt[k^2]{I_K( \mu_1, \mu_{j+1}^k)}  \sqrt[k]{I_K( \mu_{j+1})^{k-1}},
\end{align*}
where in the second and last lines we have used \eqref{eq:kthrootineq}. Rearranging the terms, we have
$$ \left( I_K( \mu_1, \mu_{j+1}^k)  \right)^{ \frac{k^2 -1}{k^2}} \leq I_K(\mu_1)^{\frac{k-1}{k^2}} I_K(\mu_{j+1})^{\frac{k-1}{k}},$$
so that
$$  \sqrt[k]{I_K( \mu_1, \mu_{j+1}^k) }  \leq I_K(\mu_1)^{\frac{1}{k(k+1)}} I_K(\mu_{j+1})^{\frac{1}{k+1}}.$$
Plugging this back into \eqref{eq:kthrootineq}, we have
\begin{equation}
I_K( \mu_1 , ..., \mu_{k+1}) \leq \prod_{j=1}^{k} \sqrt[k]{  I_K( \mu_1, \mu_{j+1}^k)}  \leq  \prod_{j=1}^{k+1} \sqrt[k+1]{ I_K( \mu_j)}. 
\end{equation}
Our claim then follows via induction.
\end{proof}

The upper bound  \eqref{eq:AMineq} allows us to prove a corresponding lower bound for the mixed energy:

\begin{corollary}\label{cor:AMILowerBound}
If $K$ is $n$-positive definite on $\Omega^n$, then for all $\mu_1, ..., \mu_n \in \mathbb{P}(\Omega)$,
\begin{equation}\label{eq:AMILowerBound}
- \frac{1}{n} \sum_{j=1}^{n} I_K(\mu_j) \leq I_K( \mu_1, ..., \mu_n).
\end{equation}
\end{corollary}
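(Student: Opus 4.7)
The plan is to reduce the desired $n$-input lower bound to the classical two-input lower bound by freezing all but two arguments of $K$ against the given probability measures. The technical engine is that, iterating Lemma \ref{lem:lesspd} $n-2$ times, the hypothesis that $K$ is $n$-positive definite ensures that the two-input kernel
\[
L(x,y) \;:=\; U_K^{\mu_3,\dots,\mu_n}(x,y)
\]
is $2$-positive definite on $\Omega^2$ in the sense of Definition \ref{def:2pd}; in particular $I_L(\nu)\ge 0$ for \textbf{every} signed Borel measure $\nu$ on $\Omega$, not merely those of total mass zero.

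First I would apply this to the (positive, non-probability) measure $\nu=\mu_1+\mu_2$ and expand by bilinearity and symmetry of $L$:
\[
0 \;\le\; I_L(\mu_1+\mu_2) \;=\; I_L(\mu_1) \,+\, 2\, I_L(\mu_1,\mu_2) \,+\, I_L(\mu_2).
\]
Rewriting each $L$-energy as the corresponding $n$-input $K$-energy via Fubini, this rearranges to the two-term lower bound
\[
I_K(\mu_1,\mu_2,\mu_3,\dots,\mu_n) \;\ge\; -\frac{1}{2}\bigl[\, I_K(\mu_1,\mu_1,\mu_3,\dots,\mu_n) \,+\, I_K(\mu_2,\mu_2,\mu_3,\dots,\mu_n)\,\bigr].
\]
Next I would invoke the arithmetic mean inequality \eqref{eq:AMineq} of Lemma \ref{lem:energyAInequality}, applied separately to each of the $n$-tuples of probability measures $(\mu_j,\mu_j,\mu_3,\dots,\mu_n)$, $j=1,2$:
\[
I_K(\mu_j,\mu_j,\mu_3,\dots,\mu_n) \;\le\; \frac{1}{n}\Bigl(2\,I_K(\mu_j) \,+\, \sum_{k=3}^{n} I_K(\mu_k)\Bigr).
\]
Summing these two inequalities, the right-hand side collapses to $\frac{2}{n}\sum_{j=1}^n I_K(\mu_j)$; combined with the previous display the factor $\frac{1}{2}$ cancels, delivering the claim $I_K(\mu_1,\dots,\mu_n)\ge -\frac{1}{n}\sum_{j=1}^n I_K(\mu_j)$.

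There is no real obstacle here; the argument is a short composition of two-input positive definiteness in two slots with the already-established $n$-variable AM bound. The one subtle point worth flagging is that the step $I_L(\mu_1+\mu_2)\ge 0$ uses $L$ being $2$-positive definite in the unrestricted sense of Definition \ref{def:2pd}, which is exactly why the corollary requires $n$-positive definiteness rather than merely conditional $n$-positive definiteness: under only the conditional hypothesis, testing $L$ against $\mu_1-\mu_2$ would reproduce the existing upper bound \eqref{eq:AMineq} and yield no matching lower bound.
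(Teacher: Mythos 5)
Your argument is correct, and it takes a genuinely different route from the paper's. The paper proves the corollary by induction on the number of inputs: the base case $n=2$ tests positive definiteness against $\tfrac12(\mu_1+\mu_2)$, and the inductive step integrates out a single measure, applies the inductive hypothesis to the $(n-1)$-positive definite potential $U_K^{\mu_1}$ (via Lemma \ref{lem:lesspd}), and then uses \eqref{eq:AMineq} to convert the resulting terms $I_K(\mu_1,\mu_{j+1}^{n-1})$ into pure energies. You instead collapse all the way to the two-input case in one step, freezing $\mu_3,\dots,\mu_n$ to form the $2$-positive definite kernel $L=U_K^{\mu_3,\dots,\mu_n}$, testing it against $\mu_1+\mu_2$, and then cleaning up the two diagonal terms $I_K(\mu_j,\mu_j,\mu_3,\dots,\mu_n)$ with \eqref{eq:AMineq}; the arithmetic ($\tfrac12\cdot\tfrac2n=\tfrac1n$) checks out, and the $n=2$ case is subsumed since $L=K$ and the AM step degenerates to an identity. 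The two proofs share the same two engines (Lemma \ref{lem:lesspd} and Lemma \ref{lem:energyAInequality}) and the same underlying trick of expanding $I(\mu_1+\mu_2)\ge 0$, but yours avoids a second induction on the lower bound itself, which makes the logical structure flatter and, to my mind, slightly more transparent; the paper's version keeps the statement self-propagating in $n$, which matches the style of the surrounding results. Your closing remark correctly identifies why full (rather than conditional) $n$-positive definiteness is the right hypothesis: the step $I_L(\mu_1+\mu_2)\ge 0$ tests $L$ against a measure of total mass $2$, which is exactly what Definition \ref{def:2pd} in its unconditional form permits.
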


\begin{proof}
Suppose $n =2$, and let $\mu_1, \mu_2 \in \mathbb{P}(\Omega)$. Setting $\mu = \frac{1}{2}( \mu_1 + \mu_2)$, we have
$$ 0 \leq 4 I_K( \mu) = I_K(\mu_1) + I_K(\mu_2) + 2 I_K(\mu_1, \mu_2),$$
since $K$ is positive definite, and \eqref{eq:AMILowerBound} follows.

Now suppose our claim holds for some $k \geq 2$, and let $\mu_1, ..., \mu_{k+1} \in \mathbb{P}(\Omega)$. Since by Lemma \ref{lem:lesspd} the potential $U_K^{\mu_1}$ is $k$-positive definite,  the  inductive hypothesis implies that 
$$ - \frac{1}{k} \sum_{j=1}^{k} I_{U_K^{\mu_1}}(\mu_{j+1}) \leq I_{U_K^{\mu_1}}(\mu_2, ..., \mu_{k+1}) = I_K(\mu_1 , ..., \mu_{k+1}).$$
For $1 \leq j \leq k$, Lemma \ref{lem:energyAInequality} gives us that
$$  I_{U_K^{\mu_1}}(\mu_{j+1}) = I_K( \mu_1, \mu_{j+1}^k) \leq \frac{1}{k+1} \Big( I_K(\mu_1) + k I_K(\mu_{j+1}) \Big),$$
leading to
$$- \frac{1}{k+1} \sum_{j=1}^{k+1} I_K(\mu_j) \leq I_K(\mu_1 , ..., \mu_{k+1}),$$
which finishes the proof of the claim. 
\end{proof}
Lemma \ref{lem:energyAInequality} and Corollary \ref{cor:AMILowerBound}  imply that 
if $K$ is $n$-positive definite on $\Omega^n$ and  $\mu_1, ..., \mu_n \in \mathbb{P}(\Omega)$, then 
\begin{equation}\label{eq:AMabsval}
 \big|  I_K( \mu_1, ..., \mu_n) \big| \leq \frac{1}{n} \sum_{j=1}^{n} I_K(\mu_j).
\end{equation}
Of course, since we can choose the  probability measures $\mu_k$ to be Dirac masses,  
inequality \eqref{eq:AMabsval} yields pointwise bounds on $K$. For instance, if $K$ is  $n$-positive definite, then for all $z_1, ..., z_n \in \Omega$,
$$\big| K(z_1, ..., z_n)  \big| \leq \frac{1}{n} \sum_{j=1}^{n} K(z_j, ..., z_j),$$
and for conditionally $n$-positive definite kernels $K$, this inequality holds without the absolute value. 
Clearly then, $K$ must achieve its maximum value on its diagonal, something that is already known for the two-input case.

\begin{corollary}\label{cor:MaxOnDiagonal}
Suppose $K$ is a conditionally $n$-positive definite kernel. Then
\begin{equation}\label{eq:MaxOnDiagonal}
K( z_1, \ldots, z_n ) \leq\max_{z \in \Omega}\{K(z,\ldots,z)\}.
\end{equation}
\end{corollary}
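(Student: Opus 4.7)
The plan is to obtain the inequality as a direct consequence of the arithmetic-mean bound for mixed energies established in Lemma \ref{lem:energyAInequality}, by specializing the measures to Dirac masses. This is essentially spelled out already in the paragraph preceding the corollary, so the proof will be very short.

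First I would fix arbitrary points $z_1, \ldots, z_n \in \Omega$ and set $\mu_j = \delta_{z_j}$ for $j = 1, \ldots, n$, which are legitimate elements of $\mathbb{P}(\Omega)$. Evaluating the mutual energy directly from its definition \eqref{eq:ContEnergyDef} gives
\begin{equation*}
I_K(\mu_1, \ldots, \mu_n) = K(z_1, \ldots, z_n), \qquad I_K(\mu_j) = K(z_j, \ldots, z_j).
\end{equation*}
Then I would apply inequality \eqref{eq:AMineq} from Lemma \ref{lem:energyAInequality}, which is valid because $K$ is conditionally $n$-positive definite, to conclude
\begin{equation*}
K(z_1, \ldots, z_n) \leq \frac{1}{n} \sum_{j=1}^n K(z_j, \ldots, z_j) \leq \max_{z \in \Omega} \{K(z, \ldots, z)\},
\end{equation*}
which is exactly \eqref{eq:MaxOnDiagonal}. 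The final inequality is just the trivial bound of an average by a maximum over the compact space $\Omega$ (the maximum is attained since $K$ is continuous and $\Omega$ is compact).

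There is no serious obstacle: the work has already been done in Lemma \ref{lem:energyAInequality}, and the only idea is to test the inequality on point masses. The one thing worth double-checking is that the arithmetic-mean inequality \eqref{eq:AMineq} indeed holds without absolute values in the conditionally $n$-positive definite case (as opposed to the two-sided bound \eqref{eq:AMabsval}, which required full $n$-positive definiteness); this is precisely the statement of Lemma \ref{lem:energyAInequality}, so we may invoke it as stated.
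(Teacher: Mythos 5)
Your proof is correct and follows exactly the route the paper takes: the corollary is stated in the paper as an immediate consequence of the pointwise bound obtained by testing Lemma \ref{lem:energyAInequality} (inequality \eqref{eq:AMineq}) on Dirac masses, which is precisely your argument. Nothing is missing.
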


\subsection{Convexity} Convexity of the underlying energy functionals naturally plays an important role in energy minimization. 

\begin{definition}
Suppose $K: \Omega^n \rightarrow \mathbb{R}$. We say that $I_K$ is convex at $\mu \in \mathbb{P} (\Omega) $ if for every  $\nu \in \mathbb{P}(\Omega)$ there exists some $t_\nu \in (0,1]$, such that for all $t \in [0,t_\nu)$
\begin{equation}\label{eq:defconvex}
I_K((1-t) \mu + t \nu) \leq (1-t) I_K(\mu) + t I_K(\nu).
\end{equation}
We say $I_K$ is convex on $\mathbb{P}(\Omega)$ if inequality \eqref{eq:defconvex} holds for every $\mu$, $\nu \in \mathbb P ( \Omega)$ and  all $t\in[0,1]$. 
\end{definition}

 We observe that convexity of $I_K$ on $\mathbb P (\Omega)$ is equivalent to the fact that $I_K$  is  convex at all $\mu \in \mathbb{P} (\Omega)$. Indeed, if \eqref{eq:defconvex} fails for some  $\mu$, $\nu \in \mathbb P ( \Omega)$, then the polynomial $f(t) =  I_K((1-t) \mu + t \nu)$ is not convex on the  interval $[0,1]$, i.e.   $f''(t) <0$ on some subinterval $[a,b] \subset [0,1]$. But in this case, one can easily see that $I_K$ fails to be convex at  $\mu_a = (1-a)\mu + a \nu$.

Conditional positive definiteness of the kernel $K$ is closely related to convexity of the corresponding energy functional $I_K$. In fact, as we shall see in Proposition \ref{prop:ConvexCPDEqual2}, when $n=2$, the two notions are equivalent.  For further discussions about  the connections between various conditions related to positive definiteness in the classical two-input case, see \cite{BMV}. 

One-sided implication holds for all $n\ge 2$: as the next proposition shows,  convexity of $I_K$ can be deduced from relaxed arithmetic or geometric mean inequalities akin to \eqref{eq:AMineq} and \eqref{eq:GMineq}. This implies, due to Lemma \ref{lem:energyAInequality}, that conditionally $n$-positive definite kernels $K$ give rise to convex energies.

\begin{proposition}\label{prop:IKconvex}
Let $K: \Omega^n \rightarrow \mathbb{R}$ be continuous and symmetric and fix  $\mu \in \mathbb{P}( \Omega)$. Suppose that for all $\nu \in \mathbb{P}( \Omega)$ and  $0 \leq k \leq n$, 
\begin{equation}\label{eq:kn}
I_{K}( \mu^k, \nu^{n-k}) \leq \frac{k}{n} I_K(\mu) + \frac{n-k}{n} I_K(\nu).
\end{equation} 
Alternatively, assume that for all $\nu \in \mathbb{P}( \Omega)$  we have $I_K (\nu)  \ge 0 $ and for all  $0 \leq k \leq n$ 
\begin{equation}\label{eq:kn1}
I_{K}( \mu^k, \nu^{n-k}) \leq  \big( I_K(\mu) \big)^{\frac{k}{n}} \cdot  \big( I_K(\nu) \big)^{\frac{n-k}{n}}.
\end{equation}
 Then $I_K$ is convex at $\mu$. If \eqref{eq:kn} or \eqref{eq:kn1} holds for all $\mu \in \mathbb P (\Omega)$, then $I_K$ is convex on $\mathbb{P}(\Omega)$.
\end{proposition}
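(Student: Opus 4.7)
The plan is to expand the energy $I_K((1-t)\mu + t\nu)$ by multilinearity and then apply one of the two hypotheses termwise in the resulting binomial sum. Since $I_K$ is multilinear in its arguments (which is immediate from the definition as an iterated integral) and $K$ is symmetric, for every $\mu, \nu \in \mathbb{P}(\Omega)$ and $t \in [0,1]$ we obtain
\begin{equation*}
I_K\bigl((1-t)\mu + t\nu\bigr) = \sum_{k=0}^n \binom{n}{k} (1-t)^k t^{n-k} I_K(\mu^k, \nu^{n-k}).
\end{equation*}
This identity is the backbone of the proof; everything else amounts to bounding the right-hand side by $(1-t) I_K(\mu) + t I_K(\nu)$.

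Under hypothesis \eqref{eq:kn}, I would substitute the bound $I_K(\mu^k, \nu^{n-k}) \le \frac{k}{n} I_K(\mu) + \frac{n-k}{n} I_K(\nu)$ into each term and then evaluate the two resulting binomial moments: the binomial theorem gives $\sum_{k=0}^n \binom{n}{k}(1-t)^k t^{n-k} = 1$, while the mean of a $\mathrm{Binomial}(n,1-t)$ variable gives $\sum_{k=0}^n k\binom{n}{k}(1-t)^k t^{n-k} = n(1-t)$ and hence $\sum_{k=0}^n (n-k)\binom{n}{k}(1-t)^k t^{n-k} = nt$. Putting these together yields exactly $(1-t) I_K(\mu) + t I_K(\nu)$, which establishes \eqref{eq:defconvex} for all $t \in [0,1]$, so in particular one may take $t_\nu = 1$.

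Under hypothesis \eqref{eq:kn1}, set $a := I_K(\mu)^{1/n}$ and $b := I_K(\nu)^{1/n}$, which are nonnegative by assumption. Substituting the bound $I_K(\mu^k, \nu^{n-k}) \le a^k b^{n-k}$ into the multilinear expansion and applying the binomial theorem collapses the right-hand side to $\bigl((1-t)a + tb\bigr)^n$. By convexity of $x \mapsto x^n$ on $[0,\infty)$ (Jensen's inequality), this is at most $(1-t) a^n + t b^n = (1-t) I_K(\mu) + t I_K(\nu)$, giving \eqref{eq:defconvex}. Alternatively one can apply the weighted AM--GM inequality to $a^k b^{n-k}$ inside each term to reduce directly to the computation from the previous paragraph.

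Finally, the passage from convexity at each $\mu \in \mathbb{P}(\Omega)$ to convexity on $\mathbb{P}(\Omega)$ is essentially the remark made by the authors just before Proposition \ref{prop:IKconvex}: the above argument in fact produces the inequality \eqref{eq:defconvex} for every $t \in [0,1]$, so no additional work is needed. I expect the main potential pitfall to be purely bookkeeping --- making sure the multilinear expansion is rigorously justified (it is, because $K$ is continuous and $\Omega$ is compact, so Fubini applies and there are no convergence issues) and keeping track of the symmetry of $K$ so that the $\binom{n}{k}$ collected terms are genuinely equal. There is no deep obstacle; the proposition is essentially a binomial-identity computation coupled with the appropriate mean inequality.
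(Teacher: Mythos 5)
Your proposal is correct and follows essentially the same route as the paper: the multilinear binomial expansion of $I_K((1-t)\mu+t\nu)$, termwise application of \eqref{eq:kn}, and evaluation of the binomial moments; for the multiplicative case the paper uses exactly your second alternative (weighted AM--GM reduces \eqref{eq:kn1} to \eqref{eq:kn}). Your first alternative via $\bigl((1-t)a+tb\bigr)^n$ and convexity of $x\mapsto x^n$ is a valid minor variant but not a genuinely different argument.
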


\begin{proof}
Assume that \eqref{eq:kn} holds. For all $t \in [0,1]$, we have
\begin{align*}
I_K((1-t)  \mu + t \nu) & = \sum_{k=0}^{n} (1-t)^k t^{n-k} \binom{n}{k} I_K( \mu^k, \nu^{n-k}) \\
& \leq \sum_{k=0}^{n} (1-t)^k t^{n-k} \binom{n}{k} 
\Bigg( \frac{k}{n}  I_K( \mu) + \frac{n-k}{n}  I_K(\nu) \Bigg)\\
& = \sum_{k=1}^{n}(1-t)^k t^{n-k} \binom{n-1}{k-1} I_K( \mu) +  \sum_{k=0}^{n-1}(1-t)^k t^{n-k} \binom{n-1}{k} I_K(\nu) \\
& = (1-t)  I_K( \mu) + t I_K(\nu),
\end{align*}
which proves convexity of the energy functional. The multiplicative inequality \eqref{eq:kn1} implies \eqref{eq:kn} by the arithmetic-geometric mean inequality, leading to convexity of $K$ in this case. 
\end{proof}

Lemma \ref{lem:energyAInequality} with $\mu_1 = \dots = \mu_k = \mu$ and $\mu_{k+1} = \dots = \mu_n = \nu$ shows that inequality \eqref{eq:kn} holds, if $K$ is conditionally $n$-positive definite. This leads to the following corollary.

\begin{corollary}\label{cor:pdconvex}
If $K$ is conditionally $n$-positive definite, then $I_K$ is convex on $\mathbb{P}(\Omega)$.
\end{corollary}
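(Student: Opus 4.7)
The plan is to observe that this corollary is essentially a direct composition of two results already in hand, namely Lemma \ref{lem:energyAInequality} and Proposition \ref{prop:IKconvex}. All the real work has been done; what remains is to verify that the hypotheses of the proposition are met in the setting of conditional $n$-positive definiteness.

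Concretely, fix $\mu, \nu \in \mathbb{P}(\Omega)$ and an integer $0 \leq k \leq n$. I would apply Lemma \ref{lem:energyAInequality} to the $n$-tuple of measures consisting of $k$ copies of $\mu$ and $n-k$ copies of $\nu$. Since the kernel is conditionally $n$-positive definite, the lemma yields
\begin{equation*}
I_K(\mu^k, \nu^{n-k}) \leq \frac{1}{n}\left( k \, I_K(\mu) + (n-k) \, I_K(\nu) \right) = \frac{k}{n} I_K(\mu) + \frac{n-k}{n} I_K(\nu),
\end{equation*}
which is precisely inequality \eqref{eq:kn} in the hypothesis of Proposition \ref{prop:IKconvex}. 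Since $\mu$ and $\nu$ were arbitrary, inequality \eqref{eq:kn} holds for all $\mu \in \mathbb{P}(\Omega)$ (and all admissible $\nu$), so the last clause of Proposition \ref{prop:IKconvex} gives convexity of $I_K$ on the whole of $\mathbb{P}(\Omega)$.

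There is no serious obstacle here; the proof is effectively a one-line invocation. The only thing worth double-checking is the bookkeeping at the boundary cases $k=0$ and $k=n$, where \eqref{eq:kn} becomes the trivial equality $I_K(\nu) = I_K(\nu)$ or $I_K(\mu) = I_K(\mu)$, and hence holds automatically without any appeal to $n$-positive definiteness. Thus the statement follows with no additional ingredients beyond what is already established in the preceding subsections.
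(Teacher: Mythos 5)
Your proof is correct and is exactly the paper's argument: the authors likewise obtain inequality \eqref{eq:kn} by applying Lemma \ref{lem:energyAInequality} with $\mu_1=\dots=\mu_k=\mu$ and $\mu_{k+1}=\dots=\mu_n=\nu$, and then invoke Proposition \ref{prop:IKconvex}. Nothing is missing.
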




To prove the converse implication for $n=2$, we start by observing that Proposition \ref{prop:IKconvex} admits a partial converse:

\begin{lemma}\label{lem:Convex bound}
Suppose $\mu \in \mathbb{P}(\Omega)$ is  such that $I_K$ is convex at $\mu$. Then for all $\nu \in \mathbb{P}(\Omega)$,
\begin{equation}\label{eq:mun-1}
I_K(\mu^{n-1}, \nu) \leq \frac{n-1}{n} I_K(\mu) + \frac{1}{n} I_K(\nu).
\end{equation}
\end{lemma}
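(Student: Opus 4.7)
The plan is to expand the energy $I_K((1-t)\mu + t\nu)$ as a polynomial in $t$, and then extract the desired inequality by comparing derivatives at $t=0$ using the local convexity hypothesis.

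First, I would write, by multilinearity of $I_K$ in its measure arguments,
\begin{equation*}
f(t) := I_K((1-t)\mu + t\nu) = \sum_{k=0}^{n}\binom{n}{k}(1-t)^{n-k}t^{k}\, I_K(\mu^{n-k},\nu^{k}),
\end{equation*}
which is a polynomial in $t$ of degree $n$. Observe that $f(0) = I_K(\mu)$. Next I would compute $f'(0)$. In the sum above, the only terms contributing to the derivative at $t=0$ are those coming from $k=0$ (where $\frac{d}{dt}(1-t)^n\big|_{t=0} = -n$) and $k=1$ (where $\frac{d}{dt}\bigl[n(1-t)^{n-1}t\bigr]\big|_{t=0} = n$); all terms with $k\ge 2$ vanish to first order in $t$. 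Hence
\begin{equation*}
f'(0) = -n\,I_K(\mu) + n\,I_K(\mu^{n-1},\nu).
\end{equation*}

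On the other hand, set $g(t) = (1-t)I_K(\mu) + t\,I_K(\nu)$, so that $g(0) = I_K(\mu)$ and $g'(0) = I_K(\nu) - I_K(\mu)$. The convexity of $I_K$ at $\mu$ guarantees some $t_\nu \in (0,1]$ with $f(t) \le g(t)$ for all $t \in [0,t_\nu)$. Since $f(0)=g(0)$, dividing by $t$ and letting $t \to 0^+$ yields $f'(0) \le g'(0)$, i.e.
\begin{equation*}
-n\,I_K(\mu) + n\,I_K(\mu^{n-1},\nu) \le -I_K(\mu) + I_K(\nu).
\end{equation*}
Rearranging and dividing by $n$ gives exactly \eqref{eq:mun-1}.

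There is no serious obstacle here; the only things to be careful about are the combinatorial bookkeeping in the derivative (verifying that $k\ge 2$ terms drop out) and the fact that the convexity hypothesis is local in $t$, which is why we must pass to derivatives at $t=0$ rather than evaluating at $t=1$ directly. The argument uses nothing beyond symmetry of $K$, multilinearity of $I_K$, and a one-sided derivative comparison.
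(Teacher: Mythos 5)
Your proof is correct and follows essentially the same route as the paper: expand $I_K((1-t)\mu+t\nu)$ as a polynomial in $t$ via multilinearity, use the convexity inequality on a small interval $[0,t_\nu)$ together with $f(0)=g(0)$, and pass to the limit $t\to 0^+$ (the paper phrases this as dividing by $t(1-t)$ rather than as a derivative comparison, but the computation is the same). No gaps.
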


\begin{proof} 
Let $\nu \in \mathbb{P}(\Omega)$. Assume $t \in (0,1)$ such that \eqref{eq:defconvex} holds. Then
\begin{align*}
t I_K(\nu) + (1-t) I_K(\mu) & \geq  I_K(t \nu + (1-t) \mu) \\
& = \sum_{j=0}^{n} (1-t)^j t^{n-j} \binom{n}{j} I_K(\mu^j, \nu^{n-j}). 
\end{align*}
Clearly then
\begin{equation*}
\Big( t - t^n \Big) I_K(\nu) + \Big( (1-t) - (1-t)^n \Big) I_K(\mu) \geq \sum_{j=1}^{n-1} (1-t)^j t^{n-j} \binom{n}{j} I_K(\mu^j, \nu^{n-j}),
\end{equation*}
and dividing by $t(1-t)$, we obtain
\begin{equation*}
\Big( \sum_{k=0}^{n-2} t^k \Big) I_K(\nu) + \Big( \sum_{l=0}^{n-2} (1-t)^l \Big) I_K(\mu) \geq \sum_{j=1}^{n-1} (1-t)^{j-1} t^{n-j-1} \binom{n}{j} I_K(\mu^j, \nu^{n-j}).
\end{equation*}
If $I_K$ is convex at $\mu$, then we may take the limit as $t$ goes to $0$, which gives  us
\begin{equation*}
I_K(\nu) + (n-1)  I_K(\mu) \geq n I_K(\mu^{n-1}, \nu).
\end{equation*}
\end{proof}
Observe that if $I_K$ is convex (in particular, convex at $\nu$),  
switching the roles of $\mu$ and $\nu$ we obtain
\begin{equation*}
(n-1) I_K(\nu) +  I_K(\mu) \geq n I_K(\mu, \nu^{n-1}).
\end{equation*}
Therefore,  in the case  $n = 2,3$, Lemma \ref{lem:Convex bound} provides  the converse of Proposition \ref{prop:IKconvex}, in other  words, $I_K$ is convex if and only if it satisfies the arithmetic mean inequalities \eqref{eq:kn}. We are now ready to demonstrate the  equivalence of the conditional positive definiteness of $K$ and the convexity of $K$ for the two-input case.

\begin{proposition}\label{prop:ConvexCPDEqual2}
Suppose $K: \Omega^2 \rightarrow \mathbb{R}$ is continuous and symmetric. Then $K$ is conditionally positive definite if and only if $I_K$ is convex. 
\end{proposition}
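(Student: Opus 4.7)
The forward direction is already in hand: Corollary~\ref{cor:pdconvex} (with $n=2$) says that if $K$ is conditionally positive definite, then $I_K$ is convex on $\mathbb{P}(\Omega)$. So the plan is to concentrate on the converse, assuming $I_K$ is convex on $\mathbb{P}(\Omega)$ and deducing that $I_K(\nu)\geq 0$ for every signed Borel measure $\nu$ with $\nu(\Omega)=0$.

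The key idea is to reduce the inequality on signed measures to the two-point arithmetic-mean bound for \emph{probability} measures that is already delivered by Lemma~\ref{lem:Convex bound}. Given such a $\nu$, I would take its Hahn--Jordan decomposition $\nu = \nu^+ - \nu^-$; the condition $\nu(\Omega)=0$ forces $\nu^+(\Omega)=\nu^-(\Omega)=c$ for some $c\geq 0$. The case $c=0$ is trivial since then $\nu=0$, so I assume $c>0$ and set $\mu_1 = \nu^+/c$ and $\mu_2 = \nu^-/c$, which lie in $\mathbb{P}(\Omega)$ and satisfy $\nu = c(\mu_1-\mu_2)$.

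Expanding by bilinearity (and using symmetry of $K$),
\begin{equation*}
I_K(\nu) \;=\; c^2\bigl(I_K(\mu_1) - 2 I_K(\mu_1,\mu_2) + I_K(\mu_2)\bigr).
\end{equation*}
Now the convexity of $I_K$ on $\mathbb{P}(\Omega)$ means in particular that $I_K$ is convex at $\mu_1$, so Lemma~\ref{lem:Convex bound} applied with $n=2$ yields
\begin{equation*}
I_K(\mu_1,\mu_2) \;\leq\; \tfrac{1}{2}\bigl(I_K(\mu_1)+I_K(\mu_2)\bigr),
\end{equation*}
which gives $I_K(\nu)\geq 0$ at once. Combined with the forward direction, this proves the equivalence.

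I do not expect any substantial obstacles: the Hahn decomposition together with Lemma~\ref{lem:Convex bound} (whose $n=2$ instance is exactly the estimate we need) does all the work, and no additional regularity or density argument is required since $K$ is continuous and $\Omega$ is compact, so $I_K$ is bilinear and finite on signed Borel measures. The only point worth flagging is that the lemma demands convexity of $I_K$ at the first slot $\mu_1$; this is fine because $I_K$ is assumed convex on \emph{all} of $\mathbb{P}(\Omega)$, and the symmetry of $K$ lets us freely relabel $\mu_1$ and $\mu_2$ if desired.
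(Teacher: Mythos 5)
Your proposal is correct and follows essentially the same route as the paper: both use Corollary~\ref{cor:pdconvex} for the forward direction, and for the converse both write the balanced signed measure as $c(\mu_+-\mu_-)$ with $\mu_\pm\in\mathbb{P}(\Omega)$, expand $I_K$ by bilinearity, and invoke Lemma~\ref{lem:Convex bound} with $n=2$ to bound the cross term. The only difference is that you spell out the Hahn--Jordan decomposition explicitly, which the paper leaves implicit.
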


\begin{proof}
Corollary \ref{cor:pdconvex} gives us one direction. For the other, assume that $I_K$ is convex. Let $\mu \in \mathcal{M} (\Omega)$ satisfy  $\mu(\Omega) = 0$. Then there exist $\mu_{+}, \mu_{-} \in \mathbb{P}(\Omega)$ and some constant $c \geq 0$ such that $\mu = c( \mu_+ - \mu_-)$. 
 Lemma \ref{lem:Convex bound} with $n=2$ implies that $I_K (\mu_+,\mu_-) \le \frac12 \big(I_K (\mu_+) + I_K (\mu_-)\big)$ and therefore
$$ I_K(\mu) = c^2 \Big( I_K(\mu_+) - 2 I_K(\mu_+ , \mu_- ) + I_K( \mu_- ) \Big) \geq 0,$$
i.e. $K$ is conditionally positive definite. 
\end{proof}


It is not completely clear whether this equivalence holds for $n\ge 3$, but evidence suggests that it does not. Indeed, Proposition \ref{prop:convnotpd} provides an example of  a three-input kernel with  $\Omega = \mathbb S^{d-1}$, which is not conditionally  $3$-positive definite, but at the same time the energy functional is convex at $\sigma$ (although we don't know if it is convex at {\emph{all}} measures in $\mathbb P (\mathbb S^{d-1})$) and is minimized by $\sigma$. 

In this regard, we would also like to point out that a number of our results about energy minimizers do not require the full power of convexity of $I_K$ on  $\mathbb P (\Omega)$, but rather just the convexity at the presumptive minimizer $\mu$. In particular, condition \eqref{eq:alpha},  which appears  in Theorems \ref{thm:Easy minimizers} and \ref{thm:lowtohigh}, is implied by inequality \eqref{eq:mun-1}  of Lemma \ref{lem:Convex bound}, and hence it holds if $I_K$ is convex at $\mu$.

Using  convexity of the energy functional, one can draw a connection between  minimizing the $n$-input energy  $I_K$ and the $(n-1)$-input energy $I_{U_K^\mu}$, thus obtaining  our first result about minimizers of  multi-input  energies.

\begin{proposition}\label{prop:n-pt energy convex}
Let $n\ge 3$. Assume that $K: \Omega^n \rightarrow \mathbb R$ is continuous and symmetric, $I_K$ is convex, and that  $\mu \in \mathbb{P}(\Omega)$ is a minimizer of $I_{U_K^{\mu}}$. Then $\mu$ is a minimizer of $I_K$.
\end{proposition}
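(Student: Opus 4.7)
The plan is to combine two ingredients: the Fubini identity that reinterprets $I_{U_K^\mu}$ as a mutual energy of the form $I_K(\mu^{n-1}, \nu)$, and the arithmetic mean bound furnished by convexity of $I_K$ at $\mu$ via Lemma \ref{lem:Convex bound}. These two bounds point in opposite directions and pin $I_K(\mu)$ below $I_K(\nu)$.

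First I would unpack the hypothesis. By Fubini--Tonelli, for any $\nu \in \mathbb{P}(\Omega)$,
\begin{equation*}
I_{U_K^{\mu}}(\nu) = \int_\Omega \cdots \int_\Omega U_K^\mu(x_2,\dots,x_n)\, d\nu(x_2) \cdots d\nu(x_n) = I_K(\mu, \nu^{n-1}),
\end{equation*}
and in particular $I_{U_K^\mu}(\mu) = I_K(\mu)$. The assumption that $\mu$ minimizes $I_{U_K^\mu}$ on $\mathbb{P}(\Omega)$ therefore reads
\begin{equation*}
I_K(\mu, \nu^{n-1}) = I_{U_K^\mu}(\nu) \geq I_{U_K^\mu}(\mu) = I_K(\mu) \qquad \text{for all } \nu \in \mathbb{P}(\Omega).
\end{equation*}

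Next I would invoke convexity. Since $I_K$ is convex on $\mathbb{P}(\Omega)$ it is in particular convex at $\mu$, so Lemma \ref{lem:Convex bound} (applied with the roles of $\mu$ and $\nu$ left as in that lemma, using the symmetry of $K$ to interpret $I_K(\mu^{n-1},\nu) = I_K(\mu,\nu^{n-1})$) gives
\begin{equation*}
I_K(\mu, \nu^{n-1}) \leq \frac{n-1}{n} I_K(\mu) + \frac{1}{n} I_K(\nu).
\end{equation*}

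Combining the two displays yields $I_K(\mu) \leq \frac{n-1}{n} I_K(\mu) + \frac{1}{n} I_K(\nu)$, which rearranges to $I_K(\mu) \leq I_K(\nu)$, as desired. Since there is no real obstacle here — everything is a one-line consequence once the hypothesis is rewritten via Fubini — the only subtle point is to be sure Lemma \ref{lem:Convex bound} is applicable, which it is because convexity on $\mathbb{P}(\Omega)$ certainly implies convexity at the specific measure $\mu$.
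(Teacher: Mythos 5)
Your overall strategy coincides with the paper's: rewrite $I_{U_K^\mu}(\nu) = I_K(\mu,\nu^{n-1})$ via Fubini, bound this mixed energy from below by $I_K(\mu)$ using the minimization hypothesis, and from above by a convex combination of $I_K(\mu)$ and $I_K(\nu)$ using Lemma \ref{lem:Convex bound}. However, the step producing the upper bound contains a genuine error. Symmetry of $K$ does \emph{not} give $I_K(\mu^{n-1},\nu) = I_K(\mu,\nu^{n-1})$: the first quantity integrates $K$ against $n-1$ copies of $\mu$ and one copy of $\nu$, while the second uses one copy of $\mu$ and $n-1$ copies of $\nu$. For $n\ge 3$ these are different mixed energies --- symmetry only permits permuting the order of the arguments, not changing the multiplicities of the measures. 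So Lemma \ref{lem:Convex bound} applied at $\mu$ (``with the roles left as in that lemma,'' as you put it) controls $I_K(\mu^{n-1},\nu)$, which is not the quantity your Fubini identity produces.

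The repair is exactly what the paper does: apply Lemma \ref{lem:Convex bound} with the roles of $\mu$ and $\nu$ interchanged, which uses convexity of $I_K$ \emph{at $\nu$} and yields
\begin{equation*}
I_K(\mu,\nu^{n-1}) = I_K(\nu^{n-1},\mu) \le \tfrac{n-1}{n}\, I_K(\nu) + \tfrac{1}{n}\, I_K(\mu)
\end{equation*}
(note the coefficients are the reverse of the ones you wrote). Combined with $I_K(\mu)\le I_K(\mu,\nu^{n-1})$ this gives $\tfrac{n-1}{n} I_K(\mu) \le \tfrac{n-1}{n} I_K(\nu)$, so the conclusion still follows. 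This also shows that your closing remark misidentifies where the hypothesis is used: the inequality you need is convexity at the competitor $\nu$, not at $\mu$, which is precisely why the proposition assumes $I_K$ is convex on all of $\mathbb{P}(\Omega)$ rather than merely at the presumptive minimizer.
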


\begin{proof}
We first prove that if the energy $I_K$ is convex and $\mu$, $\nu \in \mathbb{P}(\Omega)$, then
\begin{equation}\label{eq:convn-1}
I_K( \nu) - I_K(\mu) \geq \frac{n}{n-1} \Big( I_{U_K^{\mu}}(\nu) - I_{U_K^{\mu}}(\mu) \Big).
\end{equation}
Indeed, we have $ I_{U_K^{\mu}}(\mu) = I_K (\mu)$ and, by  Lemma \ref{lem:Convex bound},  $ I_{U_K^{\mu}}(\nu) = I_K (\mu,\nu^{n-1})\le \frac{1}{n} I_K (\mu) + \frac{n-1}{n} I_K (\nu)$. Thus, 
\begin{align*}
I_K(\nu) - I_K(\mu) - n \Big( I_{U_K^{\mu}}(\nu) - I_{U_K^{\mu}}(\mu) \Big) & = I_K(\nu) - n I_K(\mu, \nu^{n-1}) + (n-1) I_K(\mu) \\
& \geq (n-2) \Big( I_K(\mu) - I_K(\nu) \Big),
\end{align*}
which implies inequality \eqref{eq:convn-1}.

Inequality \eqref{eq:convn-1}, together with  the fact that $\mu$ is a minimizer of $I_{U_K^{\mu}}$,   implies that for all $\nu \in \mathbb{P}(\Omega)$, we have
$$ I_K(\nu) - I_K(\mu) \geq \frac{n}{n-1} \Big( I_{U_K^{\mu}}(\nu) - I_{U_K^{\mu}}(\mu) \Big) \geq 0,$$
hence $\mu$ minimizes $I_K$. 
\end{proof}

Proposition \ref{prop:n-pt energy convex} can be viewed as a precursor of some of our more advanced results from Section \ref{sec:min} which  show that there is a strong relation between $\mu$ minimizing the $n$-input energy $I_K$  and  the energy functional $I_{U^{\mu^k}_K}$ with a lower number of inputs. In fact, Theorem \ref{thm:lowtohigh} contains Proposition \ref{prop:n-pt energy convex} as a special case. We have nevertheless decided to include this proposition, as it admits a very transparent and elementary proof, which also provides a quantitative relation between the minimization of $I_K$  and $I_{U^{\mu}_K}$. 


\section{Minimizers of the energy functional}\label{sec:min}

 We finally turn to some of the general results about minimizers of energies with multivariate kernels. 
 In the classical two-input case, properties of minimizing measures are closely related to their potentials, see e.g. \cite{Bj,BHS}. Direct analogues of such statements can be obtained for multi-input energies. We start with the  necessary condition, which states  that the potential of a minimizer is constant on its support.  As before, in all of the statements of this section we assume that $K: \Omega^n \rightarrow \mathbb R $ is continuous and symmetric, even if not explicitly stated. 


\begin{theorem}\label{thm:Constant on Supp}
Let $K: \Omega^n \rightarrow R $ be  continuous and symmetric. Suppose that $\mu$ is a minimizer of $I_K$ over $\mathbb{P}(\Omega)$. Then $U_K^{\mu^{n-1}}(x) = I_K(\mu)$ on $\operatorname{supp}(\mu)$ and $U_K^{\mu^{n-1}}(x) \geq I_K(\mu)$ on $\Omega$.
\end{theorem}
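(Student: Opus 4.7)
The plan is to use the classical variational argument from potential theory, adapted to the multivariate setting by exploiting multilinearity and symmetry of $I_K$. The core idea is to test the minimizer $\mu$ against the one-parameter family of perturbations $\mu_t := (1-t)\mu + t\delta_x$, where $x \in \Omega$ is arbitrary and $t \in [0,1]$, and extract the inequality $U_K^{\mu^{n-1}}(x) \ge I_K(\mu)$ from the sign of the right derivative at $t=0$.

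First I would expand $I_K(\mu_t)$ using the multilinearity of the mutual energy and the symmetry of $K$. Since each integration variable is integrated against $(1-t)\mu + t\delta_x$, a binomial-type expansion yields
\begin{equation*}
f(t) := I_K(\mu_t) = \sum_{k=0}^{n} \binom{n}{k} (1-t)^k\, t^{n-k}\, I_K(\mu^k, \delta_x^{n-k}),
\end{equation*}
which is a polynomial in $t$. Because $\mu$ minimizes $I_K$ over $\mathbb{P}(\Omega)$ and $\mu_t \in \mathbb{P}(\Omega)$ for all $t \in [0,1]$, we have $f(0) \le f(t)$ for all such $t$, and hence $f'(0) \ge 0$.

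Next I would compute $f'(0)$ explicitly. Direct differentiation shows that only the $k=n$ and $k=n-1$ terms contribute at $t=0$: the former produces $-n\, I_K(\mu^n) = -n\, I_K(\mu)$, and the latter produces $n\, I_K(\mu^{n-1}, \delta_x) = n\, U_K^{\mu^{n-1}}(x)$, while all other terms carry a factor of $t^{n-k-1}$ with $n-k-1 \ge 1$ and thus vanish at $0$. Therefore $f'(0) = n\bigl(U_K^{\mu^{n-1}}(x) - I_K(\mu)\bigr) \ge 0$, proving the pointwise inequality $U_K^{\mu^{n-1}}(x) \ge I_K(\mu)$ on all of $\Omega$.

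For the equality on $\operatorname{supp}(\mu)$, I would integrate the inequality against $\mu$: by Fubini,
\begin{equation*}
\int_\Omega U_K^{\mu^{n-1}}(x)\, d\mu(x) = I_K(\mu^n) = I_K(\mu),
\end{equation*}
so the continuous nonnegative function $U_K^{\mu^{n-1}}(\cdot) - I_K(\mu)$ has zero integral with respect to $\mu$ and therefore must vanish identically on $\operatorname{supp}(\mu)$. There is no serious obstacle here; the only point requiring care is the derivative computation at the boundary $t=0$, which is a routine but slightly subtle combinatorial bookkeeping because both the $k=n$ and $k=n-1$ terms survive while the apparently singular factor $t^{n-k-1}$ at $k=n$ is killed by its vanishing coefficient $(n-k)$.
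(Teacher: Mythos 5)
Your proof is correct. It rests on the same underlying idea as the paper's argument --- the first variation of $I_K$ at the minimizer along an admissible perturbation --- but the execution differs in both halves. The paper perturbs by a general signed measure $\nu$ with $\nu(\Omega)=0$ and $\mu+\varepsilon\nu\ge 0$, extracts $I_K(\mu^{n-1},\nu)\ge 0$, and then obtains the conclusion by contradiction: it assumes $U_K^{\mu^{n-1}}(z)>U_K^{\mu^{n-1}}(y)$ for some $z\in\operatorname{supp}(\mu)$ and $y\in\Omega$, and builds a specific $\nu = m\,\delta_y - \mu\vert_B$ supported on a small ball $B$ about $z$ on which the potential oscillates by at most $\tfrac{a-b}{2}$, forcing $I_K(\mu^{n-1},\nu)<0$. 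You instead specialize immediately to the perturbations $\mu_t=(1-t)\mu+t\delta_x$, read off the global inequality $U_K^{\mu^{n-1}}(x)\ge I_K(\mu)$ directly from $f'(0)\ge 0$, and then get equality on $\operatorname{supp}(\mu)$ by observing that the continuous nonnegative function $U_K^{\mu^{n-1}}-I_K(\mu)$ integrates to zero against $\mu$ and hence vanishes on the support. Your route avoids the ball-and-oscillation construction entirely and is arguably cleaner for this continuous-kernel setting; the paper's more general perturbation class has the advantage that it adapts verbatim to the local-minimizer refinement recorded in Corollary \ref{cor:Constant on Supp} (since $\mu+\varepsilon\nu=(1-\varepsilon)\mu+\varepsilon\widetilde\nu$ there), though your Dirac perturbations are also of that convex-combination form, so your argument localizes just as well. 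Your derivative bookkeeping at $t=0$ is handled correctly, including the vanishing coefficient that kills the apparently singular $k=n$ term.
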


\begin{proof} The proof is a simple extension of the proof of Theorem 2 in \cite{Bj}, 
and we include it for the sake of completeness. 
Let $\nu \in \mathcal{M}(\Omega)$ be such that $\nu(\Omega) = 0$ and $\mu(A) + \varepsilon \nu(A) \geq 0$ for all Borel subsets $A \subseteq \Omega$ and $0 \leq \varepsilon \leq 1$. This clearly means that $\mu + \varepsilon \nu \in \mathbb{P}(\Omega)$, so
\begin{align*}
I_K(\mu) & \leq I_K(\mu + \varepsilon \nu)  = \sum_{k=0}^{n} \binom{n}{k} \varepsilon^k I_K( \mu^{n-k}, \nu^k).
\end{align*}
Thus, for $0 \leq \varepsilon \leq 1$,
$$ 0 \leq \varepsilon \left( \sum_{k=1}^{n} \binom{n}{k} \varepsilon^{k-1} I_K( \mu^{n-k}, \nu^k) \right).$$
This means that $I_K( \mu^{n-1}, \nu) \geq 0$.

Suppose, indirectly, that there exist $a,b \in \mathbb{R}$, $z \in \operatorname{supp}(\mu)$ and $y \in \Omega$ such that
$$ a = U_K^{\mu^{n-1}}(z) > U_K^{\mu^{n-1}}(y) = b.$$
Let $B$ be a ball centered at $z$, small enough so that $y \not\in B$ and oscillation of $ U_K^{\mu^{n-1}}(x)$ is at most $\frac{a-b}{2}$, and let $m = \mu(B)$. Let $\nu$ be defined by
\begin{equation}\label{eq:nu1}
 \nu(A) = m \delta_{y}(A) - \mu(A \cap B).
 \end{equation}
Then
\begin{align*}
I_K( \mu^{n-1}, \nu) & = U_K^{\mu^{n-1}}(y) \cdot m - \int_{B} U_K^{\mu^{n-1}}(x) d\mu(x)  \leq bm - \left( a - \frac{a-b}{2} \right)m  < 0,
\end{align*}
which is a contradiction. Thus, if $U_K^{\mu^{n-1}}(z) = a$ for some $z \in \operatorname{supp}(\mu)$, then $U_K^{\mu^{n-1}}(x) \geq a$ for all $x \in \Omega$. Our claim then follows.
\end{proof}

\begin{definition}\label{def:locmin}
We shall say that $\mu$ is a {\it{local minimizer}} of $I_K$ if it is a local minimizer in every direction, in other words, if  for each $\nu \in \mathbb P (\Omega)$, there exists $t_\nu  \in (0,1]$  such that  for all $t \in [0, t_\nu]$ we have 
\begin{equation*}\label{eq:deflocmin} I_K \big(  (1-t) \mu + t \nu \big) \geq I_K (\mu).
\end{equation*}
\end{definition}
Observe that this definition differs from the definition of local minimizers with respect to some metric, such as the Wasserstein $d_\infty$ metric or the total variation norm (the difference is similar to that between the Gateaux and Fr\'echet derivatives). \\

Analyzing the proof of Theorem \ref{thm:Constant on Supp}, we find that for $\nu$ defined in \eqref{eq:nu1}, we can write $\mu +  \varepsilon \nu  = (1- \varepsilon) \mu + \varepsilon \widetilde{\nu}$ with $\widetilde{\nu} = \mu + \nu \in \mathbb P (\Omega)$. Hence, one arrives at a  contradiction even if  $\mu$ is just a local minimizer.

\begin{corollary}\label{cor:Constant on Supp}
The statement of Theorem \ref{thm:Constant on Supp} remains true if we only  assume that $\mu$ is a local (not global) minimizer of $I_K$. 
\end{corollary}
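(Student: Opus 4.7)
The plan is to revisit the proof of Theorem \ref{thm:Constant on Supp} and observe that only the local minimality hypothesis is needed, using the author's hint: every admissible perturbation $\mu + \varepsilon \nu$ is secretly a convex combination of $\mu$ with another probability measure in $\mathbb P(\Omega)$.

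First I would repeat the opening setup of the original proof. Pick a signed Borel measure $\nu$ with $\nu(\Omega) = 0$ and $\mu + \varepsilon \nu \ge 0$ for all $\varepsilon \in [0,1]$, and define $\tilde\nu := \mu + \nu$. Taking $\varepsilon = 1$ in the positivity hypothesis gives $\tilde\nu \ge 0$, and since $\tilde\nu(\Omega) = \mu(\Omega) + \nu(\Omega) = 1$, we have $\tilde\nu \in \mathbb P(\Omega)$. The identity
\[
\mu + \varepsilon \nu = (1-\varepsilon)\mu + \varepsilon \tilde\nu
\]
lets me invoke Definition \ref{def:locmin} in the direction $\tilde\nu$: there exists $t_{\tilde\nu} > 0$ such that $I_K(\mu) \le I_K(\mu + \varepsilon \nu)$ for all $\varepsilon \in [0, t_{\tilde\nu}]$. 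Expanding the right-hand side by multilinearity, subtracting $I_K(\mu)$, and dividing by $\varepsilon > 0$ yields
\[
0 \le \sum_{k=1}^{n} \binom{n}{k} \varepsilon^{k-1} I_K(\mu^{n-k}, \nu^k),
\]
and letting $\varepsilon \to 0^+$ produces the variational inequality $I_K(\mu^{n-1}, \nu) \ge 0$, just as in the global case.

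The remainder of the argument carries over verbatim. Assuming for contradiction that $U_K^{\mu^{n-1}}(z) > U_K^{\mu^{n-1}}(y)$ for some $z \in \supp(\mu)$ and $y \in \Omega$, I would construct $\nu$ as in \eqref{eq:nu1} (mass transferred from a small ball $B$ around $z$ to the Dirac mass at $y$) and check that $\nu(\Omega) = 0$ and $\mu + \varepsilon \nu \ge 0$ for $\varepsilon \in [0,1]$, so this $\nu$ is admissible for the variational inequality above. The oscillation estimate from the original proof then gives $I_K(\mu^{n-1}, \nu) < 0$, a contradiction. No genuine obstacle arises here: the only new verification is the admissibility of $\tilde\nu$, which is immediate from the bound $\varepsilon \in [0,1]$ already built into the original argument. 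In effect, the global minimization hypothesis of Theorem \ref{thm:Constant on Supp} was never exercised beyond what local minimality in every direction already supplies.
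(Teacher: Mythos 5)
Your proposal is correct and follows exactly the paper's own argument: the paper likewise observes that $\mu + \varepsilon\nu = (1-\varepsilon)\mu + \varepsilon\widetilde{\nu}$ with $\widetilde{\nu} = \mu + \nu \in \mathbb{P}(\Omega)$, so that local minimality in the direction $\widetilde{\nu}$ suffices to run the contradiction from Theorem \ref{thm:Constant on Supp}. Your write-up just makes the limiting step $\varepsilon \to 0^+$ and the admissibility check explicit, which the paper leaves implicit.
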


In general,  the converse to Theorem \ref{thm:Constant on Supp} is not true. However, with some  additional convexity assumptions, the necessary condition also becomes sufficient.

\begin{theorem}\label{thm:Easy minimizers}
Let $K: \Omega^n \rightarrow \mathbb{R}$ be symmetric and continuous. Suppose that for some $\mu \in \mathbb{P}(\Omega)$, there exists a finite constant $M$ such that $U_{K}^{\mu^{n-1}}(x) \geq M$ on $\Omega$ and $ U_{K}^{\mu^{n-1}}(x) = M$ on $\operatorname{supp}(\mu)$. Suppose further that for all $\nu \in \mathbb{P}(\Omega)$, there exists some $\alpha \in (0,1)$, possibly depending on  $\nu$, such that
\begin{equation}\label{eq:alpha}
 I_K(\mu^{n-1}, \nu) \leq \alpha I_K(\nu) + (1- \alpha) I_K(\mu).
 \end{equation} \noindent Then $\mu$ is a minimizer of $I_K$. 
\end{theorem}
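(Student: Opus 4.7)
The plan is to observe that the hypothesis $U_K^{\mu^{n-1}} = M$ on $\operatorname{supp}(\mu)$ immediately pins down the value of the constant $M$: integrating this identity against $\mu$ itself (which is a probability measure supported on $\operatorname{supp}(\mu)$) gives
\begin{equation*}
I_K(\mu) = \int_\Omega U_K^{\mu^{n-1}}(x)\, d\mu(x) = \int_{\operatorname{supp}(\mu)} M\, d\mu(x) = M.
\end{equation*}
So the assumption really says $U_K^{\mu^{n-1}} \geq I_K(\mu)$ on all of $\Omega$, with equality on $\operatorname{supp}(\mu)$.

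Next, I would pick an arbitrary competitor $\nu \in \mathbb{P}(\Omega)$ and integrate the pointwise lower bound $U_K^{\mu^{n-1}}(x) \geq I_K(\mu)$ against $\nu$. Since $\nu(\Omega) = 1$, this yields
\begin{equation*}
I_K(\mu^{n-1}, \nu) = \int_\Omega U_K^{\mu^{n-1}}(x)\, d\nu(x) \geq I_K(\mu).
\end{equation*}
This is the key one-sided bound on the mixed energy coming from the pointwise potential inequality.

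Finally, I would combine this with the hypothesized inequality \eqref{eq:alpha}. Chaining them gives
\begin{equation*}
I_K(\mu) \;\leq\; I_K(\mu^{n-1}, \nu) \;\leq\; \alpha I_K(\nu) + (1-\alpha) I_K(\mu),
\end{equation*}
and subtracting $(1-\alpha) I_K(\mu)$ from both sides leaves $\alpha I_K(\mu) \leq \alpha I_K(\nu)$. Since $\alpha \in (0,1)$ we may divide by it and conclude $I_K(\mu) \leq I_K(\nu)$. As $\nu$ was arbitrary, $\mu$ minimizes $I_K$ over $\mathbb{P}(\Omega)$.

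There is no serious obstacle here; the argument is essentially a two-line computation once one identifies $M$ with $I_K(\mu)$. The only subtlety worth flagging is conceptual: the role of condition \eqref{eq:alpha} is to transfer the lower bound from the mixed energy $I_K(\mu^{n-1},\nu)$ back to the pure energy $I_K(\nu)$, which in the classical two-input case is automatic from Lemma \ref{lem:BHS2inputAInequality} (i.e.\ conditional positive definiteness), and for $n \geq 3$ is supplied here as a hypothesis — a condition that, by Lemma \ref{lem:Convex bound}, is weaker than convexity of $I_K$ at $\mu$ and in particular follows from conditional $n$-positive definiteness of $K$.
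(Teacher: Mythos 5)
Your proof is correct and is essentially identical to the paper's own argument: both identify $M = I_K(\mu)$ by integrating the potential against $\mu$, deduce $I_K(\mu^{n-1},\nu) \geq I_K(\mu)$ from the pointwise lower bound, and then close the loop using \eqref{eq:alpha}. No issues.
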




\begin{proof}
For any $\nu \in \mathbb{P}(\Omega)$, for  some $\alpha \in (0,1)$, we have 
$$I_K(\mu) = \int_\Omega U_K^{\mu^{n-1}} (x) d\mu (x) \leq  \int_\Omega U_K^{\mu^{n-1}} (x) d\nu (x)  =  I_K( \mu^{n-1}, \nu) \leq \alpha I_K(\nu) + (1- \alpha) I_K(\mu),
$$
hence  $I_K(\mu) \leq I_K(\nu)$.
\end{proof}

Some remarks concerning the assumptions of  Theorem \ref{thm:Easy minimizers}, i.e. condition \eqref{eq:alpha}, are in order. Due to Lemma \ref{lem:Convex bound}, convexity of the energy functional $I_K$ at $\mu$ implies condition \eqref{eq:alpha} with $\alpha = \frac1{n}$. In turn, if $K$ is conditionally $n$-positive definite, Corollary \ref{cor:pdconvex} states that $I_K$ is convex, and hence again condition \eqref{eq:alpha} is satisfied (alternatively,  Lemma \ref{lem:energyAInequality} shows directly that conditional $n$-positive definiteness of $K$ implies  the convexity condition \eqref{eq:alpha} of  Theorem \ref{thm:Easy minimizers} with $\alpha = \frac1{n}$).  The hierarchy of these conditions can be summarized in the following diagram:
\begin{align}
\label{eq:conditions} &  \textup{ $K$ is $n$-positive definite }  \Longrightarrow  \textup{ $K$ is conditionally $n$-positive definite  }  \Longrightarrow \\ \nonumber  & \Longrightarrow  \textup{  $I_K$ is convex  }  \Longrightarrow 
   \textup{  $I_K$ is convex at $\mu$  } \Longrightarrow   \textup{ condition \eqref{eq:alpha} holds.}
\end{align}
Therefore, Theorem \ref{thm:Easy minimizers} (as well as  other statements relying on \eqref{eq:alpha}, e.g. Lemma \ref{lem:loctoglob} or Theorem \ref{thm:lowtohigh}) may be applied under the assumptions that $K$ is (conditionally) $n$-positive definite or that $I_K$ is convex at $\mu$.\\

We also make the following remark: in the case when $\mu$ has full support, i.e. $\operatorname{supp} (\mu) = \Omega$, if the   first condition of Theorem \ref{thm:Easy minimizers} holds, i.e. $  U_{K}^{\mu^{n-1}}(x) = M$ for all $x\in \Omega$, then $I_K (\mu^{n-1}, \nu) = I_K (\mu)$, and the assumption \eqref{eq:alpha} is obviously  the same as the conclusion of Theorem \ref{thm:Easy minimizers}. This does not, however, render this case of the theorem useless  -- on the contrary, if one replaces \eqref{eq:alpha} with one of the stronger conditions in \eqref{eq:conditions}, one obtains an interesting and meaningful statement. (This  shows that the most of the content is hidden in the implications presented in \eqref{eq:conditions}.)  We summarize this case in a separate corollary, as it will be of use later. 
\begin{corollary}\label{cor:Easy minimizers}
Let $K: \Omega^n \rightarrow \mathbb{R}$ be symmetric and continuous. Suppose that  $\mu \in \mathbb{P}(\Omega)$ has full support $\operatorname{supp} (\mu) = \Omega$ and that there exists a constant $M$ such that $ U_{K}^{\mu^{n-1}}(x) = M$ on $\Omega$. Assume also that any of the conditions in \eqref{eq:conditions} holds (e.g., $K$ is $n$-positive definite or $I_K$ is convex). 
 Then $\mu$ is a minimizer of $I_K$. 
\end{corollary}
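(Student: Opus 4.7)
The plan is to apply Theorem \ref{thm:Easy minimizers} directly. Under the hypotheses of the corollary, both premises of that theorem reduce to routine checks, so almost no new work is required; the corollary is essentially a convenient repackaging of Theorem \ref{thm:Easy minimizers} in the setting where the potential is globally constant.

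First I would verify the potential hypothesis of Theorem \ref{thm:Easy minimizers}. Since we are given $U_K^{\mu^{n-1}}(x) = M$ for every $x \in \Omega$, both the inequality $U_K^{\mu^{n-1}}(x) \geq M$ on $\Omega$ and the equality $U_K^{\mu^{n-1}}(x) = M$ on $\operatorname{supp}(\mu) \subseteq \Omega$ hold trivially with the same constant $M$.

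Next I would verify condition \eqref{eq:alpha}, and for this I would walk down the hierarchy \eqref{eq:conditions} that is already stated in the excerpt. The cleanest route is: if $K$ is conditionally $n$-positive definite, then applying Lemma \ref{lem:energyAInequality} with $\mu_1 = \cdots = \mu_{n-1} = \mu$ and $\mu_n = \nu$ gives
\[
I_K(\mu^{n-1},\nu) \leq \frac{n-1}{n} I_K(\mu) + \frac{1}{n} I_K(\nu),
\]
which is precisely \eqref{eq:alpha} with $\alpha = 1/n$. The same inequality is supplied by Lemma \ref{lem:Convex bound} under the weaker assumption that $I_K$ is convex at $\mu$, so \emph{any} condition in \eqref{eq:conditions} implies \eqref{eq:alpha}. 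Once both hypotheses of Theorem \ref{thm:Easy minimizers} have been verified, the conclusion $I_K(\mu) \leq I_K(\nu)$ for all $\nu \in \mathbb{P}(\Omega)$ follows immediately.

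There is no genuine obstacle here; the main point worth emphasizing is the one already noted in the discussion following Theorem \ref{thm:Easy minimizers}, namely that when the potential is globally constant and $\operatorname{supp}(\mu) = \Omega$ one has $I_K(\mu^{n-1},\nu) = I_K(\mu)$ for every $\nu$, so the bare condition \eqref{eq:alpha} would collapse to the conclusion. Consequently the real content of the corollary lies in explicitly certifying \eqref{eq:alpha} via the stronger, more verifiable conditions in the implication chain \eqref{eq:conditions}, which is exactly what the two short steps above accomplish.
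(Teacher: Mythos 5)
Your proposal is correct and follows exactly the paper's route: the corollary is obtained by feeding the (trivially satisfied) constant-potential hypothesis together with condition \eqref{eq:alpha}, certified through the implication chain \eqref{eq:conditions}, into Theorem \ref{thm:Easy minimizers}. Your closing remark about the collapse of \eqref{eq:alpha} when $\operatorname{supp}(\mu)=\Omega$ mirrors the paper's own discussion preceding the corollary.
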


We also observe that Corollary \ref{cor:Constant on Supp} and Theorem \ref{thm:Easy minimizers} imply the following local-to-global principle for minimizers of $I_K$ under convexity assumptions.

\begin{lemma}\label{lem:loctoglob}
Let $n\ge 2$ and let $\mu$ be a local minimizer of the energy functional $I_K$. Assume also that condition \eqref{eq:alpha} is satisfied. Then $\mu$ is a global minimizer of $I_K$ over $\mathbb P (\Omega)$. 
\end{lemma}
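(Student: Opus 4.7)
The plan is that this lemma is an almost immediate combination of the two preceding results, so my strategy is simply to chain them together.

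First, I would invoke Corollary \ref{cor:Constant on Supp}, which extends Theorem \ref{thm:Constant on Supp} to local minimizers: since $\mu$ is assumed to be a local minimizer of $I_K$, the corollary yields the potential identity $U_K^{\mu^{n-1}}(x) = I_K(\mu)$ on $\operatorname{supp}(\mu)$ together with the inequality $U_K^{\mu^{n-1}}(x) \geq I_K(\mu)$ everywhere on $\Omega$. In other words, setting $M = I_K(\mu)$, the first hypothesis of Theorem \ref{thm:Easy minimizers} is satisfied automatically, purely from local minimality.

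Second, I would observe that the hypotheses of Theorem \ref{thm:Easy minimizers} are now fully in place: the potential condition with constant $M = I_K(\mu)$ holds by the previous step, and the convexity-type inequality \eqref{eq:alpha} is assumed. Applying that theorem directly gives $I_K(\mu) \leq I_K(\nu)$ for every $\nu \in \mathbb{P}(\Omega)$, i.e., $\mu$ is a global minimizer.

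There is essentially no obstacle here; the content of the lemma is really concentrated in Corollary \ref{cor:Constant on Supp} (which does the local-to-potential step) and Theorem \ref{thm:Easy minimizers} (which does the potential-plus-convexity-to-global step). The only thing worth noting in the write-up is that the constant $M$ in the application of Theorem \ref{thm:Easy minimizers} must be identified with $I_K(\mu)$, which is finite because $K$ is continuous on the compact space $\Omega^n$. After that identification, the chain of implications summarized in \eqref{eq:conditions} confirms that \eqref{eq:alpha} is exactly the right hypothesis under which the potential characterization forces global minimality, completing the proof.
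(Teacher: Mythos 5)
Your proposal is correct and follows exactly the paper's own argument: Corollary \ref{cor:Constant on Supp} supplies the potential condition of Theorem \ref{thm:Easy minimizers} with $M = I_K(\mu)$, and that theorem together with hypothesis \eqref{eq:alpha} yields global minimality. No differences worth noting.
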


\begin{proof}
Corollary \ref{cor:Constant on Supp} shows that the first condition of Theorem \ref{thm:Easy minimizers} holds. Together with condition \eqref{eq:alpha}, this implies that $\mu$ is a global minimizer of $I_K$. 
\end{proof}

Naturally, the set of minimizers of a convex functional is convex. By Corollary \ref{cor:pdconvex}, for conditionally $n$-positive definite kernels, the energy $I_K$ is convex, i.e. 
minimizers of $I_K$ form a convex set in this case.

\begin{proposition}\label{prop:convexset}
Let $K$ be a conditionally $n$-positive definite kernel. Then the set of  minimizers of the energy $ I_K$ is convex.
\end{proposition}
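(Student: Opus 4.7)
The plan is to combine two ingredients already established in the paper: first, that conditional $n$-positive definiteness of $K$ yields convexity of the functional $I_K$ on $\mathbb{P}(\Omega)$, and second, the general fact that the minimizing set of a convex functional on a convex domain is convex. The space $\mathbb{P}(\Omega)$ is itself convex (any convex combination of Borel probability measures is again a Borel probability measure), so this general principle applies cleanly here.

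More concretely, I would proceed as follows. Let $m = \inf_{\nu \in \mathbb{P}(\Omega)} I_K(\nu)$ and let $\mathcal{M}_K \subseteq \mathbb{P}(\Omega)$ denote the set of measures achieving this infimum. Pick any $\mu_1, \mu_2 \in \mathcal{M}_K$ and any $t \in [0,1]$, and set $\mu_t = (1-t)\mu_1 + t\mu_2$, which lies in $\mathbb{P}(\Omega)$. By Corollary \ref{cor:pdconvex}, the conditional $n$-positive definiteness of $K$ guarantees that $I_K$ is convex on $\mathbb{P}(\Omega)$, so
\[
I_K(\mu_t) \leq (1-t) I_K(\mu_1) + t I_K(\mu_2) = (1-t)m + tm = m.
\]
On the other hand, by the definition of $m$ as an infimum, $I_K(\mu_t) \geq m$. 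Therefore $I_K(\mu_t) = m$, i.e.\ $\mu_t \in \mathcal{M}_K$, which is exactly convexity of $\mathcal{M}_K$.

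There is essentially no obstacle here: the substantive work has already been done in Lemma \ref{lem:energyAInequality} and Corollary \ref{cor:pdconvex}, which upgrade conditional $n$-positive definiteness to convexity of $I_K$ through the arithmetic-mean inequality for mutual energies. The only thing to note is that one must invoke convexity of $I_K$ on all of $\mathbb{P}(\Omega)$ (not merely at a single measure), which is precisely what Corollary \ref{cor:pdconvex} delivers. The proof is therefore a two-line argument once that corollary is cited.
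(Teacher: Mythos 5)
Your proof is correct and follows exactly the paper's route: the paper also deduces this proposition by citing Corollary \ref{cor:pdconvex} for convexity of $I_K$ and then invoking the standard fact that minimizers of a convex functional on a convex set form a convex set, which you have simply written out in full.
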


%

While Theorems \ref{thm:Constant on Supp} and \ref{thm:Easy minimizers} are straightforward generalizations of the corresponding facts for the classical two-input energies, they lead to some interesting consequences for energies with multivariate kernels. In particular, we start by showing that under condition \eqref{eq:alpha}, if $\mu$ minimizes  the lower input energy with the kernel $U_{K}^{\mu^{k}}$, then it also minimizes the original $n$-input  energy $I_K$.

\begin{theorem}\label{thm:lowtohigh}
Let $K: \Omega^n \rightarrow \mathbb{R}$, $n\ge3$, be symmetric and continuous. Assume that  for some $1\le k\le n-2$, the measure  $\mu \in \mathbb{P}(\Omega)$ (locally) minimizes the $(n-k)$-input energy $I_{U_{K}^{\mu^{k}}}$. Assume also that $\mu$ satisfies  condition \eqref{eq:alpha} of Theorem \ref{thm:Easy minimizers}.  Then $\mu$ minimizes the $n$-input energy $I_K$. 
\end{theorem}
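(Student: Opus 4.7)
The plan is to reduce the claim to Theorem \ref{thm:Easy minimizers} by producing the constancy-on-support property of the full $(n-1)$-fold potential $U_K^{\mu^{n-1}}$ from the (local) minimizing property of $\mu$ for the reduced $(n-k)$-input energy $I_{U_K^{\mu^k}}$. The key ingredient is a Fubini--Tonelli identification: the top-level potential of the kernel $U_K^{\mu^k}$ against $(n-k-1)$ copies of $\mu$ coincides pointwise with $U_K^{\mu^{n-1}}$, so the necessary conditions of the lower-input problem translate directly into those of the full $n$-input problem.

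First, I would set $L := U_K^{\mu^k}$, which is a continuous symmetric kernel in $n-k$ variables; the assumption $k \le n-2$ guarantees $n-k \ge 2$, so $I_L$ is a genuine (at least two-input) energy. A direct application of Fubini--Tonelli yields
$$I_L(\mu) = I_K(\mu^k, \mu^{n-k}) = I_K(\mu), \qquad U_L^{\mu^{n-k-1}}(x) = U_K^{\mu^{n-1}}(x) \text{ for every } x \in \Omega.$$
This is the only substantive step; the rest is essentially bookkeeping.

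Second, I would apply Corollary \ref{cor:Constant on Supp} -- the local-minimizer version of Theorem \ref{thm:Constant on Supp} -- to $\mu$ as a (local) minimizer of the $(n-k)$-input energy $I_L$. This gives $U_L^{\mu^{n-k-1}}(x) = I_L(\mu)$ on $\operatorname{supp}(\mu)$ and $U_L^{\mu^{n-k-1}}(x) \ge I_L(\mu)$ throughout $\Omega$. Translating via the identifications from the first step, $U_K^{\mu^{n-1}}(x) = I_K(\mu)$ on $\operatorname{supp}(\mu)$ and $U_K^{\mu^{n-1}}(x) \ge I_K(\mu)$ everywhere on $\Omega$, which is precisely the first hypothesis of Theorem \ref{thm:Easy minimizers} with the constant $M = I_K(\mu)$.

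Third, the hypothesis of the theorem also grants condition \eqref{eq:alpha}, so Theorem \ref{thm:Easy minimizers} applies directly to conclude that $\mu$ is a global minimizer of $I_K$ on $\mathbb{P}(\Omega)$. The main obstacle, such as it is, is purely notational -- keeping careful track of how many copies of $\mu$ have been integrated at each stage -- and the crucial structural point is that the necessary condition of Theorem \ref{thm:Constant on Supp} plus the convexity-type condition \eqref{eq:alpha} of Theorem \ref{thm:Easy minimizers} together form a matched necessary/sufficient pair, allowing one to bootstrap minimality from any intermediate reduced energy back up to the original $n$-input energy.
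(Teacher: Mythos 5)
Your proposal is correct and follows essentially the same route as the paper: apply Corollary \ref{cor:Constant on Supp} to the reduced kernel $U_K^{\mu^k}$, identify $U_{U_K^{\mu^k}}^{\mu^{n-k-1}}$ with $U_K^{\mu^{n-1}}$ via Fubini--Tonelli, and then invoke Theorem \ref{thm:Easy minimizers} together with condition \eqref{eq:alpha}. The only difference is that you spell out the bookkeeping more explicitly than the paper does.
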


\begin{proof}

Theorem \ref{thm:Constant on Supp} (or Corollary \ref{cor:Constant on Supp}) applied to the kernel ${U_{K}^{\mu^{k}}}$ implies that for all $x\in \Omega$
$$ U_K^{\mu^{n-1}} (x)  = U_{{U_{K}^{\mu^{k}}}}^{\mu^{n-k-1}} (x)  \ge I_{U_{K}^{\mu^{k}}} (\mu )  = I_K (\mu)$$ with equality for $x\in \supp (\mu)$.  Condition   \eqref{eq:alpha} then allows one to invoke Theorem \ref{thm:Easy minimizers}, which shows that $\mu$ minimizes $I_K$. 
\end{proof}

 The converse to Theorem \ref{thm:lowtohigh} holds for $k=n-2$ even without any convexity assumptions: in this case, if $\mu$ locally  minimizes $I_K$, it  also locally  minimizes  the  two-input energy $I_{U_K^{\mu^{n-2}}}$. 
 Moreover, under the additional condition that $\mu$ has full support, one can deduce that  the measure  $\mu$ is a {\emph{global}} minimizer of $I_{U_K^{\mu^{n-2}}}$, see parts \eqref{ttt1}-\eqref{ttt1a} of Theorem \ref{thm:highto2} below.
 Furthermore, this   implication  may be reversed, if one additionally assumes that $\mu$ {\emph{uniquely}} minimizes $I_{U_K^{\mu^{n-2}}}$.  Observe that, unlike Theorem \ref{thm:lowtohigh},  part \eqref{ttt2} of Theorem \ref{thm:highto2}  does not require any of the conditions of \eqref{eq:conditions} and, unlike part \eqref{ttt1a}, it does not require the condition $\supp (\mu) = \Omega$.

\begin{theorem}\label{thm:highto2}
Let $K: \Omega^n \rightarrow \mathbb{R}$, $n\ge3$, be symmetric and continuous and  let  $\mu \in \mathbb{P}(\Omega)$. 
\begin{enumerate}[(i)]
\item\label{ttt1} Let $\mu$ be  a local minimizer of $I_K$.   Then $\mu$ is a local minimizer of the two-input energy $I_{U_K^{\mu^{n-2}}}$. 
\item\label{ttt1a} Let $\mu$ be  a local minimizer of $I_K$ and assume, in addition, that $\mu$ has full support, i.e. $\supp (\mu) = \Omega$.   Then $\mu$ minimizes the two-input energy $I_{U_K^{\mu^{n-2}}}$ over $\mathbb P(\Omega)$. 
\item\label{ttt2}  If $\mu$ is the {{unique}} minimizer of $I_{U_K^{\mu^{n-2}}}$ in $\mathbb P(\Omega)$, then $\mu$ is a local minimizer of  $I_K$. 
\end{enumerate}
\end{theorem}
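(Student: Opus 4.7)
The plan is to compare Taylor expansions of $I_K((1-t)\mu + t\nu)$ and $I_L((1-t)\mu + t\nu)$ around $t=0$, where $L := U_K^{\mu^{n-2}}$ is the two-input kernel appearing in the statement. By Fubini, $I_L(\mu) = I_K(\mu)$, $I_L(\mu,\nu) = I_K(\mu^{n-1},\nu)$, and $I_L(\nu) = I_K(\mu^{n-2},\nu^2)$, so setting $\Delta_1 := I_K(\mu^{n-1},\nu) - I_K(\mu)$ and $\Delta_2 := I_K(\mu) - 2I_K(\mu^{n-1},\nu) + I_K(\mu^{n-2},\nu^2)$, the binomial expansion of $((1-t)\mu + t\nu)^n$ will give
\begin{equation*}
I_K((1-t)\mu + t\nu) - I_K(\mu) = n t\, \Delta_1 + \binom{n}{2} t^2\, \Delta_2 + O(t^3),
\end{equation*}
while the corresponding \emph{exact} identity for the two-input kernel reads
\begin{equation*}
I_L((1-t)\mu + t\nu) - I_L(\mu) = 2t\, \Delta_1 + t^2\, \Delta_2.
\end{equation*}
Both expressions depend on $\nu$ only through the scalars $\Delta_1$ and $\Delta_2$, and the coefficient pairs $(n, \binom{n}{2})$ and $(2,1)$ have matching signs. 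This is the central observation that lets minimization data transfer between the two energies.

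For part \eqref{ttt1}, I would argue that local minimality of $\mu$ for $I_K$ forces $\Delta_1 \geq 0$ for every $\nu$ (leading-order analysis), and $\Delta_2 \geq 0$ whenever $\Delta_1 = 0$ (next-order analysis). Substituting into the exact two-input identity immediately yields $I_L((1-t)\mu + t\nu) \geq I_L(\mu)$ for sufficiently small $t \geq 0$. For part \eqref{ttt1a}, the hypothesis $\supp(\mu) = \Omega$ lets Corollary \ref{cor:Constant on Supp} upgrade the potential inequality to an equality: $U_K^{\mu^{n-1}} \equiv I_K(\mu)$ on all of $\Omega$, so $\Delta_1 \equiv 0$ for every $\nu \in \mathbb{P}(\Omega)$. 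Local minimality of $I_K$ now gives $\Delta_2 \geq 0$ universally, and evaluating the exact two-input identity at $t = 1$ reads $I_L(\nu) - I_L(\mu) = \Delta_2 \geq 0$, establishing global minimality.

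For part \eqref{ttt2}, applying Theorem \ref{thm:Constant on Supp} to the two-input kernel $L$ at its minimizer $\mu$ and unfolding $U_L^\mu = U_K^{\mu^{n-1}}$ yields $\Delta_1 \geq 0$ for every $\nu \in \mathbb{P}(\Omega)$. I would then split into cases: if $\nu = \mu$ the whole expansion vanishes; if $\Delta_1 > 0$, the linear term dominates for small $t > 0$; and if $\nu \neq \mu$ with $\Delta_1 = 0$, then $I_L(\mu,\nu) = I_L(\mu)$ forces $\Delta_2 = I_L(\nu) - I_L(\mu) > 0$ by uniqueness of the minimizer, so the quadratic term dominates. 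In each case the $O(t^3)$ correction is harmless for $t$ small enough, giving local minimality. The main delicate point is the last case: without uniqueness one could only conclude $\Delta_2 \geq 0$, and then the sign of the next-order term would become relevant; the strict inequality extracted from uniqueness is precisely what is needed when $\Delta_1$ degenerates to zero.
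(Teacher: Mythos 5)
Your proposal is correct and follows essentially the same route as the paper: both arguments compare the expansions of $I_K$ and $I_{U_K^{\mu^{n-2}}}$ along $(1-t)\mu+t\nu$, note that the first and second derivatives at $t=0$ agree up to the positive factors $n/2$ and $n(n-1)/2$, and then run the same case analysis on the signs of $\Delta_1$ and $\Delta_2$ (the paper obtains $\Delta_1\ge 0$ via Corollary \ref{cor:Constant on Supp} rather than directly from the first-order condition, but this is immaterial).
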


\begin{proof}

Fix an arbitrary measure $\nu \in \mathbb P (\Omega)$. For $t\in [0,1]$, let  us define two functions 
$g_\nu (t) = I_K  \big( (1-t) \mu  + t \nu \big)$ and $h_\nu (t) = I_{U_K^{\mu^{n-2}}}  \big( (1-t) \mu  + t \nu \big) = I_K \big(\mu^{n-2},  \big((1-t) \mu  + t \nu  \big)^2\big)$. We have 
\begin{equation}\label{eq:gnu}\nonumber
g_\nu (t) = (1-t)^n I_K (\mu)  + n t (1-t)^{n-1}  I_K (\mu^{n-1},\nu)  + {n \choose 2} t^2 (1-t)^{n-2} I_K (\mu^{n-2}, \nu^2) + R_\nu (t), 
\end{equation}
where each term in $R_\nu (t)$ contains a factor of the form $t^k$ with $k\ge 3$ and, therefore, $R'_\nu (0) = R''_\nu (0) = 0$,
\begin{equation}\label{eq:hnu}\nonumber
h_\nu (t) =   (1-t)^2 I_K (\mu)  + 2 t (1-t)  I_K (\mu^{n-1},\nu)  +   t^2   I_K (\mu^{n-2}, \nu^2).
\end{equation}
A direct (elementary, but lengthy) computation, which we omit,  shows that
\begin{align}
\label{1der0} h'_\nu (0) =  \frac{2}{n}  g'_\nu (0) &  =  2 \big(  I_K (\mu^{n-1},\nu ) - I_K (\mu)  \big), \\
 \label{2der0}  h''_\nu (0) =  \frac{2}{n(n-1)}  g''_\nu (0) &  =  2 \big( I_K (\mu) - 2  I_K (\mu^{n-1},\nu )  + I_K (\mu^{n-2},\nu^2   ) \big).
\end{align}

We now start by proving  \eqref{ttt1}.  Let  $\mu$ be a local minimizer of $I_K$.  According to Corollary  \ref{cor:Constant on Supp},  we have that $U_K^{\mu^{n-1}}(x) \ge I_K (\mu)$  on $\Omega$ and therefore,  
$I_K (\mu^{n-1}, \nu ) \ge  I_K (\mu)$ for any $\nu\in \mathbb P(\Omega)$. Since $g_\nu$ has a local minimum at $t=0$, either $g'_\nu (0) > 0$, or $g'_\nu (0) =  0$ and $g''_\nu (0) \ge  0$. In the first case, we also have $h'_\nu (0) >0$. In the second case, $h'_\nu (0) =  0$ and $h''_\nu (0) \ge  0$, and since $h_\nu$ is quadratic, this implies that  $h_\nu (t) = a t^2 + b$ with $a\ge 0$. Thus,  $h_\nu$ has a local minimum at $t=0$ for each $\nu\in \mathbb P(\Omega)$, i.e. $\mu$ is a local minimizer of $I_{U_K^{\mu^{n-2}}}$.  \vskip2mm

If in addition  $\mu$ has full support,  then  Corollary  \ref{cor:Constant on Supp} implies that  for any  $\nu\in \mathbb P(\Omega)$, we have $I_K (\mu^{n-1}, \nu )  =   I_K (\mu)$.  Therefore, relations \eqref{1der0}-\eqref{2der0}, together with the fact that $g_\nu$ has a local minimum at $t=0$,  show that $g'_\nu (0) =0$, hence $g''_\nu (0) \ge 0$,   and  at the same time 
\begin{equation}\label{eq:gnu2}
 g''_\nu (0) = n(n-1) \big(  I_K (\mu^{n-2}, \nu^2) - I_K (\mu) \big)  =   n(n-1) \Big(  I_{U_K^{\mu^{n-2}}}(\nu) - I_{U_K^{\mu^{n-2}}}(\mu) \Big).
\end{equation}  Hence, $\mu$ is a global minimizer of $ I_{U_K^{\mu^{n-2}}} $, which proves  part \eqref{ttt1a}. \vskip2mm



To prove \eqref{ttt2}, assume that $\mu$ is the unique  global minimizer of $ I_{U_K^{\mu^{n-2}}} $. Observe that,   since 
the potential of $ {U_K^{\mu^{n-2}}}$ with respect to $\mu$ is  $U_K^{\mu^{n-1}}$,  Theorem \ref{thm:Constant on Supp} applied  to ${U_K^{\mu^{n-2}}}$ implies that, just like in part \eqref{ttt1}, we have $I_K (\mu^{n-1}, \nu ) \ge  I_K (\mu)$.  Thus, $g'_\nu (0 ) \ge 0$ by \eqref{1der0}. If $g'_\nu (0 ) > 0$, there is a local minimum at $t=0$. If, however, $g'_\nu (0 ) = 0$, then  $I_K (\mu^{n-1}, \nu ) =  I_K (\mu)$  and  relation \eqref{eq:gnu2} holds.  Since $\mu$ uniquely minimizes $I_{U_K^{\mu^{n-2}}}$,  this proves that $g''_\nu (0) > 0$ for  $\nu \neq \mu$. Hence, in each case, $g_\nu$ has a local minimum at $t=0$, i.e. $\mu$ is a local minimizer of $I_K$. 
\end{proof}


For classical pairwise interaction energies, it is well known that the kernel is conditionally positive definite on the support of the minimizer (see, e.g., \cite{FSch}), therefore, we obtain the following corollary to part \eqref{ttt1a} Theorem \ref{thm:highto2}:

\begin{corollary}\label{cor:pd2}
Assume that $\mu \in \mathbb{P}(\Omega)$ with $\supp (\mu) = \Omega$   is a local minimizer of $I_K$.  Then the $(n-2)$-fold potential of $K$ with respect to $\mu$, i.e. the two-variable  function ${U_K^{\mu^{n-2}}} (x,y)$, is conditionally positive definite on $\Omega$. 
\end{corollary}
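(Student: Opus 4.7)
The plan is to reduce the multivariate statement to a classical two-input potential-theoretic fact via Theorem~\ref{thm:highto2}\eqref{ttt1a}, and then invoke the quoted reference. Concretely, I would introduce the two-input kernel $L(x,y) := U_K^{\mu^{n-2}}(x,y)$, which is continuous and symmetric on $\Omega^2$ under our standing assumptions on $K$. The hypothesis that $\mu$ is a local minimizer of $I_K$ with $\supp(\mu) = \Omega$ is exactly what is needed to apply part~\eqref{ttt1a} of Theorem~\ref{thm:highto2}, which then yields that $\mu$ is a \emph{global} minimizer of $I_L$ over $\mathbb{P}(\Omega)$.

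This reduces the corollary to the classical two-input statement alluded to in the paragraph preceding it: if a continuous symmetric two-input energy is globally minimized by a probability measure of full support, then the kernel is conditionally positive definite (cf.~\cite{FSch}). For completeness, the heart of that argument proceeds by perturbation: for any bounded measurable $f$ on $\Omega$ with $\int f\,d\mu = 0$, since $\supp(\mu) = \Omega$ and $f$ is bounded, the signed measure $\mu_\varepsilon := (1+\varepsilon f)\mu$ is a genuine probability measure for every sufficiently small $\varepsilon \in \mathbb{R}$ of either sign. Expanding
\[
0 \leq I_L(\mu_\varepsilon) - I_L(\mu) = 2\varepsilon\, I_L(\mu, f\mu) + \varepsilon^2 I_L(f\mu)
\]
and letting $\varepsilon$ range over both signs forces $I_L(\mu, f\mu) = 0$ and $I_L(f\mu) \geq 0$ for every such $f$.

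The main obstacle is then to upgrade this inequality from measures of the form $f\mu$ (with $f$ bounded and $\mu$-mean zero) to \emph{all} finite signed Borel measures $\nu$ with $\nu(\Omega) = 0$, which is what conditional positive definiteness demands. The strategy is to exploit $\supp(\mu) = \Omega$: any such $\nu$ may be approximated in the weak-$*$ topology of $C(\Omega)^*$ by measures $\nu_n = f_n \mu$ with $f_n \in L^\infty(\mu)$ and $\int f_n\,d\mu = 0$, and continuity of $L$ on the compact product space $\Omega^2$ propagates weak-$*$ convergence to convergence of the bilinear form $I_L(\nu_n) \to I_L(\nu)$. This preserves the inequality $I_L \geq 0$ in the limit, yielding the conditional positive definiteness of $L = U_K^{\mu^{n-2}}$ on $\Omega$.
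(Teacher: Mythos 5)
Your proposal is correct and follows essentially the same route as the paper: apply part \eqref{ttt1a} of Theorem \ref{thm:highto2} to conclude that $\mu$ globally minimizes the two-input energy $I_{U_K^{\mu^{n-2}}}$, and then invoke the classical fact that a continuous symmetric two-input kernel whose energy is minimized by a full-support measure is conditionally positive definite (the paper simply cites \cite{FSch} for this). Your additional sketch of that classical step is sound, with the minor caveat that in the weak-$*$ approximation of a general balanced signed measure $\nu$ by densities $f_n\mu$ one should also keep the total variations $\|f_n\mu\|$ uniformly bounded so that $I_L(f_n\mu)\to I_L(\nu)$.
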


Observe that, if the kernel $K$ is conditionally $n$-positive definite, then, according to Lemma \ref{lem:lesspd}, ${U_K^{\mu^{n-2}}} (x,y)$ is conditionally positive definite. Moreover, Theorem \ref{thm:lowtohigh} applies for conditionally positive definite kernels $K$. Therefore, the statement of Corollary \ref{cor:pd2} may be viewed as a partial converse of Theorem \ref{thm:lowtohigh} for conditionally positive definite kernels. This interplay will manifest itself in an even stronger fashion on the sphere, the situation to be explored in  Section \ref{sec:sph}.

\section{Multi-input energy on the sphere}\label{sec:sph}

We now restrict our attention to the case when $\Omega$ is the unit sphere, i.e. $\Omega = \mathbb S^{d-1} \subset \mathbb R^d$, where the symmetries and structure of the domain allow one to deduce additional information about energy minimization. 

We shall denote by $\sigma$ the normalized uniform surface measure on the sphere. One of the most natural questions is whether $\sigma$ minimizes the energy functional over $\mathbb P (\mathbb S^{d-1})$, or, in other words, whether energy minimization induces uniform distribution.

In this section, we shall be  interested in kernels, which (in addition to being continuous and symmetric) are {\it{rotationally invariant}}, i.e. have  the form
\begin{equation}\label{e.rotinv}
K(x_1,\dots,x_n) =  F \Big( (\langle x_i, x_j\rangle)_{i,j=1}^n \Big),
\end{equation}
in other words, they depend only on the Gram matrix of $\{x_1,\dots, x_n \} \subset \mathbb S^{d-1}$. 

When $n=2$, one obtains classical  pairwise interaction kernels of the form $K(x,y ) = F ( \langle x,y \rangle)$. The theory of both discrete and continuous energies with such kernels on the sphere is very rich and goes back at least to Schoenberg \cite{S}.

In the case $n=3$ rotationally invariant kernels are functions of the form
\begin{equation}\label{eq:uvt}
K(x, y, z ) = F ( \langle x,y \rangle,  \langle y,z \rangle,  \langle z,x \rangle )  = F (u,v,t),
\end{equation} 
where we set $u =  \langle x,y \rangle$, $ v=  \langle y,z \rangle$, $t =  \langle z,x \rangle$,  
and we shall keep this notation throughout the text (the slightly non-alphabetic order is inherited from \cite{CW}).  \\  

Observe that, if the $n$-input kernel $K$ is rotationally invariant, its potential  with respect to $\sigma$ is again rotationally invariant. Indeed, for any $V \in SO (d)$, we have 
\begin{equation}\label{e.rotinv2}
U_K^\sigma (Vx_1,\dots, Vx_{n-1}) = U_K^\sigma (x_1,\dots, x_{n-1}),
\end{equation}
which easily  follows from \eqref{e.rotinv} and the facts that $\langle x_i, x_j \rangle = \langle Vx_i, Vx_j \rangle $ and $\langle Vx_i, x_n \rangle  = \langle x_i, V^{-1}x_n \rangle $, $1\le i,j\le n-1$,  together with the rotational invariance of $\sigma$, i.e. $d\sigma (x_n ) = d\sigma (V^{-1} x_n)$. Iterating this observation, one finds that all $k$-fold potentials of $K$ with respect to $\sigma$, i.e. functions $U_K^{\sigma^k}$ with $1\le k \le n-1$, are rotationally invariant. In particular, when $k =n-2$, the two-input kernel $U_K^{\sigma^{n-2}}$ depends only on the inner product of the inputs, and  for $k=n-1$, the potential $U_K^{\sigma^{n-1}}$  is just a constant:
\begin{equation}\label{e.riconst}
U_K^{\sigma^{n-2}} (x,y) = G  ( \langle x,y \rangle) = G (u) \,\,\, \textup{ and } \,\,\, U_K^{\sigma^{n-1}} (x) = \operatorname{const } =  I_K (\sigma) .
\end{equation}
Recall that Theorem \ref{thm:Constant on Supp} would guarantee the latter condition in the case when $\sigma$ is a minimizer of $I_K$. However, for rotationally invariant kernels, this is automatically satisfied, which facilitates the application of the  results of Section \ref{sec:min} and   will play an important role later, in Theorem \ref{thm:3pt sphere}. \\


Turning to   the primary task of understanding when $\sigma$ minimizes $I_K$, 
 we first remind ourselves that in the classical case of a two-input energy with a rotationally invariant kernel $ G ( \langle x,y \rangle ) $ on $\mathbb S^{d-1}$, the answer to this question is well understood. In particular,  the following three conditions are equivalent, see e.g. \cite{BDM}:
\begin{enumerate}[(i)]
\item The uniform surface measure $\sigma$ minimizes $I_G$ over $\mathbb P (\mathbb S^{d-1})$.
\item The kernel $G$ is conditionally positive definite on $\mathbb S^{d-1}$.
\item The kernel $G$ is  positive definite on $\mathbb S^{d-1}$ up to a constant term, i.e. there exists a constant $c\in \mathbb R$ such that $G+c $ is  positive definite on $\mathbb S^{d-1}$ (in fact, one can take $c = - I_G (\sigma)$). 
\end{enumerate}

Our goal is to generalize these statements (at least partially) to the case of multi-input energies. 
We observe that,  if  a symmetric rotationally invariant kernel $K$  is conditionally $n$-positive definite on $\mathbb S^{d-1}$, 
then, according to Lemma \ref{lem:lesspd},  the potential $G(u) = U_K^{\sigma^{n-2}} (x, y)$ is also conditionally positive definite, and hence, 
by the discussion above, $\sigma$ is a minimizer of the two-input energy $I_{U_K^{\sigma^{n-2}}}$. Therefore, since conditionally $n$-positive definite kernels satisfy condition \eqref{eq:alpha}, Theorem \ref{thm:lowtohigh} with $k=n-2$ applies and we obtain the following statement:

\begin{theorem}\label{thm:3pt sphere}
Suppose that $K: ( \mathbb S^{d-1} )^n \rightarrow \mathbb R$ is continuous, symmetric, rotationally invariant, and conditionally $n$-positive definite on $\mathbb S^{d-1}$. 
Then $\sigma$ is a minimizer of $I_K$ over $\mathbb{P}(\Omega)$. 
\end{theorem}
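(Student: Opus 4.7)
The plan is to reduce the $n$-input problem to the classical two-input spherical setting by integrating out $n-2$ variables against $\sigma$, and then bootstrap back up to the full energy using Theorem \ref{thm:lowtohigh}. The reduction is available because rotational invariance of $K$ is inherited by its potentials against $\sigma$, and conditional $n$-positive definiteness of $K$ is inherited by those same potentials via Lemma \ref{lem:lesspd}.

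First I would record that, by the rotational-invariance computation leading to \eqref{e.rotinv2}--\eqref{e.riconst}, the two-input potential $U_K^{\sigma^{n-2}}(x,y)$ depends only on $\langle x, y \rangle$, so it equals $G(\langle x, y \rangle)$ for some continuous $G : [-1,1] \to \mathbb{R}$. Next, applying Lemma \ref{lem:lesspd} iteratively with $\mu = \sigma$ (or invoking the corollary noted just after that lemma), the conditional $n$-positive definiteness of $K$ propagates to conditional $(n-k)$-positive definiteness of $U_K^{\sigma^{k}}$ for each $0 \le k \le n-2$; taking $k = n-2$ gives that $U_K^{\sigma^{n-2}}$ is conditionally positive definite in the classical two-input sense.

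At this stage I would invoke the classical Schoenberg equivalence recorded just before the theorem (the implication (ii) $\Rightarrow$ (i) of the three equivalent conditions on the sphere): since the rotationally invariant two-input kernel $G(\langle x,y\rangle)$ is conditionally positive definite, $\sigma$ minimizes the two-input energy $I_{U_K^{\sigma^{n-2}}}$ over $\mathbb{P}(\mathbb{S}^{d-1})$. To lift this back to $I_K$, I would apply Theorem \ref{thm:lowtohigh} with $\mu = \sigma$ and $k = n-2$. The hypothesis \eqref{eq:alpha} demanded by that theorem is furnished by the implication chain \eqref{eq:conditions}: more concretely, Lemma \ref{lem:energyAInequality} applied with $\mu_1 = \cdots = \mu_{n-1} = \sigma$ and $\mu_n = \nu$ yields
\begin{equation*}
I_K(\sigma^{n-1}, \nu) \;\le\; \frac{n-1}{n}\, I_K(\sigma) + \frac{1}{n}\, I_K(\nu),
\end{equation*}
which is precisely \eqref{eq:alpha} with $\alpha = 1/n$. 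Theorem \ref{thm:lowtohigh} then concludes that $\sigma$ minimizes $I_K$ over $\mathbb{P}(\mathbb{S}^{d-1})$.

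I do not anticipate any serious obstacle: all the machinery has already been assembled. The only points that require a bit of care are (a) the correct iteration of Lemma \ref{lem:lesspd} down to the two-input level, which rests on the probability measure $\sigma$ being well-defined and on rotational invariance surviving each integration, and (b) checking that the condition \eqref{eq:alpha} needed to invoke Theorem \ref{thm:lowtohigh} is indeed supplied for free by conditional $n$-positive definiteness, not some stronger hypothesis. Both points are routine consequences of earlier results in the paper.
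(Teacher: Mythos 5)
Your proposal is correct and follows essentially the same route as the paper's own argument: Lemma \ref{lem:lesspd} pushes conditional $n$-positive definiteness down to the rotationally invariant two-input potential $U_K^{\sigma^{n-2}}$, the classical Schoenberg equivalence gives that $\sigma$ minimizes that two-input energy, and Theorem \ref{thm:lowtohigh} with $k=n-2$ lifts this back to $I_K$, with condition \eqref{eq:alpha} supplied by Lemma \ref{lem:energyAInequality} exactly as you say. The paper also notes an even shorter alternative via Corollary \ref{cor:Easy minimizers}, using the fact that $U_K^{\sigma^{n-1}}$ is automatically constant by rotational invariance, but your argument is the one the authors actually spell out.
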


This theorem also easily follows   from Theorem \ref{thm:Easy minimizers} and the remarks thereafter (or, more precisely, from Corollary \ref{cor:Easy minimizers}), since, as explained above, the potential $U_K^{\sigma^{n-1}}$ is constant on $\mathbb S^{d-1}$.  


Notice that, unlike some statements of Section \ref{sec:min}, e.g.  Theorem \ref{thm:lowtohigh}, for rotationally invariant kernels in  the theorem above  one does not need to assume anything about energies with a lower number of inputs -- conditional positive definiteness alone suffices. 

Theorem \ref{thm:3pt sphere} immediately  yields some interesting examples:

\begin{corollary}\label{cor:analytic(uvt)}
Let $f: [-1,1] \rightarrow \mathbb{R}$ be a real-analytic function with  nonnegative Maclaurin coefficients and  let $F (u,v,t) = f(uvt)$. Then, for $K$ defined as in \eqref{eq:uvt}, the uniform surface measure $\sigma$ minimizes the energy $I_K$ over $\mathbb{P}(\mathbb S^{d-1})$.
\end{corollary}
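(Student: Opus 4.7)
The plan is to verify the hypothesis of Theorem \ref{thm:3pt sphere}: $K(x,y,z) = f(\langle x,y\rangle\langle y,z\rangle \langle z,x\rangle)$ is clearly continuous, symmetric, and rotationally invariant, so the only thing to prove is that $K$ is conditionally $3$-positive definite. In fact, I will show the stronger statement that $K$ is $3$-positive definite, exploiting the nonnegativity of the Maclaurin coefficients together with the closure properties of $n$-positive definite kernels established in Lemma \ref{lem:Schur 3pt}.

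The first step is to reduce to monomials. Write $f(s) = \sum_{k=0}^\infty a_k s^k$ with $a_k \ge 0$. Since $f$ is real-analytic on $[-1,1]$, its Maclaurin series converges uniformly on $[-1,1]$, and for $x,y,z \in \mathbb{S}^{d-1}$ the argument $uvt = \langle x,y\rangle\langle y,z\rangle\langle z,x\rangle$ lies in $[-1,1]$. Hence the partial sums $K_N(x,y,z) = \sum_{k=0}^N a_k (uvt)^k$ converge uniformly to $K$ on $(\mathbb{S}^{d-1})^3$. By Lemma \ref{lem:Schur 3pt}, it therefore suffices to show that each monomial kernel
\[
M_k(x,y,z) := \langle x,y\rangle^k \langle y,z\rangle^k \langle z,x\rangle^k
\]
is $3$-positive definite (for $k=0$ this is the constant $1$, which is trivially $3$-positive definite).

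Now fix $k \ge 1$ and an arbitrary $z \in \mathbb{S}^{d-1}$, and consider the two-input kernel
\[
(M_k)_z(x,y) = \langle x,y\rangle^k \cdot \bigl(\langle x,z\rangle^k \langle y,z\rangle^k\bigr).
\]
I will show both factors on the right are classical positive definite kernels on $\mathbb{S}^{d-1}$, so their product is positive definite by Schur's product theorem. For the first factor, $\langle x,y\rangle^k$ is positive definite on $\mathbb{S}^{d-1}$ — this is Schoenberg's classical result (polynomials in $\langle x,y\rangle$ with nonnegative coefficients are positive definite). For the second factor, set $g(w) = \langle w,z\rangle^k$; then $\langle x,z\rangle^k \langle y,z\rangle^k = g(x)g(y)$, and for any finite signed measure $\nu$,
\[
\int\!\!\int g(x)g(y)\, d\nu(x)\, d\nu(y) = \Bigl(\int g\, d\nu\Bigr)^2 \ge 0,
\]
so $g(x)g(y)$ is positive definite. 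Thus $(M_k)_z$ is positive definite for every $z$, which means $M_k$ is $3$-positive definite.

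Combining these two steps, $K$ is $3$-positive definite (in particular conditionally $3$-positive definite), and Theorem \ref{thm:3pt sphere} delivers the conclusion that $\sigma$ minimizes $I_K$ on $\mathbb{P}(\mathbb{S}^{d-1})$. The only delicate point is justifying uniform convergence of the Maclaurin series on $[-1,1]$ so that the closure-under-limits clause of Lemma \ref{lem:Schur 3pt} applies; this is straightforward since real-analyticity on the closed interval $[-1,1]$ means $f$ extends analytically to a neighborhood, giving radius of convergence strictly greater than $1$. Everything else reduces to standard positive-definiteness manipulations (Schur product and rank-one outer products).
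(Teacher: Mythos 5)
Your proof is correct and follows essentially the same route as the paper's: both reduce to checking $3$-positive definiteness of $K$ at a single fixed third point (by rotational invariance), recognize the restricted two-variable kernel as a product of positive definite kernels including a rank-one factor $g(x)g(y)$, and then invoke Theorem \ref{thm:3pt sphere}. The only cosmetic difference is that the paper verifies the case $k=1$ by a direct computation and obtains the powers $(uvt)^k$ from the product-closure clause of Lemma \ref{lem:Schur 3pt}, whereas you verify each monomial directly via Schur's product theorem; your extra care with uniform convergence of the Maclaurin series (which really needs the nonnegativity of the coefficients, via Pringsheim, not just real-analyticity on $[-1,1]$) is a point the paper glosses over.
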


\begin{proof}
Observe first  that in this setup, if $K_z$ is positive definite for one point $z\in \mathbb S^{d-1}$, it is also positive definite for each $z\in \mathbb S^{d-1}$ due to rotational invariance, i.e. Definition \ref{def:pd} only needs to be checked at one point. Consider first  $F(u,v,t) = uvt$ and fix any $z\in \mathbb S^{d-1}$, e.g., $z = e_1$. Then for any $\nu \in \mathcal M (\mathbb S^{d-1})$,
$$ I_{K_{e_1}} (\nu ) = \int_{\mathbb S^{d-1}} \int_{\mathbb S^{d-1}} \langle x,y \rangle x_1 y_1 d\nu(x) d\nu (y)  = \sum_{i=1}^d \bigg( \int_{\mathbb S^{d-1}} x_1 x_i \, d\nu (x) \bigg)^2 \ge 0, $$
i.e. the kernel $K(x,y,z) = \langle x,y \rangle \langle y,z \rangle \langle z,x \rangle  = uvt $ is $3$-positive definite, and hence, by Lemma \ref{lem:Schur 3pt}, so are all of its integer powers, positive linear combinations and their limits. The conclusion now follows  from Theorem \ref{thm:3pt sphere}.
\end{proof}

This corollary provides a whole array  of examples: for instance, three-input energies with kernels $K(x,y,z) = uvt$, or $(uvt)^n$, or $e^{uvt}$ are all minimized by $\sigma$. We remark that, while for $K=uvt$ this statement could be proved using semidefinite programing,  for higher powers $(uvt)^n$ this would be extremely difficult technically, and for kernels like $e^{uvt}$ almost impossible.

For even exponents, the energies with the kernels $K = (uvt)^{2k}$ can be viewed as three-input generalizations of the well-known $p$-frame potentials \cite{EO,BGMPV}, which are closely related to tight frames and projective designs \cite{BF,SG}. We also point out  that Proposition \ref{prop:PosDefProduct} provides a more general  class of $n$-positive definite kernels, which contains $K= uvt$ as a special case. \\

Unfortunately, unlike the classical  two-input case, the converse to  Theorem \ref{thm:3pt sphere}  is not true: Propositions \ref{prop:convnotpd}, \ref{prop:notpd1}, and \ref{prop:notpd2} show that some   kernels,  naturally arising in semidefinite programming and geometry,  fail to be conditionally $n$-positive definite, even though $\sigma$ minimizes corresponding energies (see Theorems \ref{thm:vol squared max}  and  \ref{thm: triangle area squared max}). 
 In other words, conditional $n$-positive definiteness of the kernel is not equivalent to the fact that  $\sigma$ minimizes the energy.

We suspect  that the  property  that $\sigma$  minimizes $I_K$ is equivalent to the fact that  $U_K^{\sigma^{n-2}}$ is conditionally positive definite, 
i.e. the two-input energy $I_{U_K^{\sigma^{n-2}}}$ is minimized by $\sigma$.   This conjecture is supported by all the examples known to us. Conditional positive-definiteness of $U_K^{\sigma^{n-2}}$  obviously follows from conditional $n$-positive definiteness of $K$, due to Lemma \ref{lem:lesspd}, but the converse implication is not true, see e.g. Proposition \ref{prop:convnotpd}. In fact, all the kernels discussed in Section \ref{sec:notnpd}  (Propositions \ref{prop:convnotpd}, \ref{prop:notpd1}, and \ref{prop:notpd2}) possess this property: they are not $3$-positive definite, but their potentials $U_K^\sigma$  with respect to $\sigma$  are (conditionally) positive definite, and the correspondig energies $I_K$ are minimized by $\sigma$.

Theorem \ref{t.sph} below (which is essentially  a restatement of Theorem \ref{thm:highto2} for the spherical case, along with the fact that $\sigma$ has full support) shows   that conditional positive definiteness of $U_K^{\sigma^{n-2}}$ is implied if  $\sigma$ is  a {\it{local}} minimizer of $I_K$, and a partial converse to this statement also holds. Observe that, if the conjecture above is true, then being a local and global minimizer are equivalent for $\sigma$: this fact is indeed true for the two-input energies,  see \cite{BGMPV}.  


\begin{theorem}\label{t.sph}
Let $K: ( \mathbb S^{d-1} )^n \rightarrow \mathbb R$ be a  continuous, symmetric, and rotationally invariant kernel. 
\begin{enumerate}[(i)]
\item\label{i1}  Assume that  $\sigma$ is a local minimizer of $I_K$ in  $\mathbb{P}(\mathbb S^{d-1})$. 
Then  the uniform measure $\sigma$  is a global minimizer of the two-input energy $I_{U_K^{\sigma^{n-2}}}$, or, equivalently, ${U_K^{\sigma^{n-2}}}$ is conditionally positive definite on the sphere $\mathbb S^{d-1}$.
\item\label{i2} Assume that $\sigma$ is the {unique} global minimizer of $I_{U_K^{\sigma^{n-2}}}$ over $\mathbb{P}(\mathbb S^{d-1})$. Then $\sigma$ is a local minimizer of the $n$-input energy $I_K$.
\end{enumerate}
\end{theorem}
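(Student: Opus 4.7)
The plan is to reduce both parts to the corresponding statements of Theorem \ref{thm:highto2} applied to $\mu = \sigma$, using two features special to the sphere: the uniform measure $\sigma$ has full support on $\mathbb{S}^{d-1}$, and for rotationally invariant two-input kernels on the sphere there is a classical equivalence (stated just before Theorem \ref{thm:3pt sphere}) between $\sigma$ being a global energy minimizer and the kernel being conditionally positive definite.

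For part \eqref{i1}, I would first invoke the discussion surrounding \eqref{e.riconst}: rotational invariance of $K$ together with iterated invariance of $\sigma$ under $SO(d)$ yields that $U_K^{\sigma^{n-2}}(x,y) = G(\langle x,y\rangle)$ for some continuous $G\colon [-1,1]\to\mathbb{R}$, so this potential is an honest classical rotationally invariant two-input kernel on the sphere. Now apply Theorem \ref{thm:highto2}\eqref{ttt1a} to $\mu=\sigma$: the full support hypothesis $\supp(\sigma)=\mathbb{S}^{d-1}$ is automatic, so local minimality of $\sigma$ for $I_K$ yields that $\sigma$ is a global minimizer of the two-input energy $I_{U_K^{\sigma^{n-2}}}$ over $\mathbb{P}(\mathbb{S}^{d-1})$. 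The equivalence with conditional positive definiteness of $U_K^{\sigma^{n-2}}$ is then precisely the classical equivalence (i)$\Leftrightarrow$(ii) recalled immediately before Theorem \ref{thm:3pt sphere}, applied to the rotationally invariant kernel $G(\langle x,y\rangle)$.

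For part \eqref{i2}, I would apply Theorem \ref{thm:highto2}\eqref{ttt2} directly to $\mu=\sigma$: if $\sigma$ is the unique global minimizer of $I_{U_K^{\sigma^{n-2}}}$ over $\mathbb{P}(\mathbb{S}^{d-1})$, then $\sigma$ is automatically a local minimizer of the $n$-input energy $I_K$. Note that this direction does not need the full support hypothesis, so no extra step is required beyond a direct quotation.

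There is no real obstacle here; the argument is an application of the previously established general results specialized to the sphere. The only point that deserves a careful sentence is the identification of $U_K^{\sigma^{n-2}}$ as a rotationally invariant two-input kernel of the form $G(\langle x,y\rangle)$, which is what allows us to replace the abstract ``$\sigma$ minimizes $I_{U_K^{\sigma^{n-2}}}$'' in the conclusion of Theorem \ref{thm:highto2}\eqref{ttt1a} by the more concrete ``$U_K^{\sigma^{n-2}}$ is conditionally positive definite on $\mathbb{S}^{d-1}$'' appearing in the statement of part \eqref{i1}.
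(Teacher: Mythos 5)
Your proposal is correct and matches the paper's own treatment: the paper explicitly describes Theorem \ref{t.sph} as ``essentially a restatement of Theorem \ref{thm:highto2} for the spherical case, along with the fact that $\sigma$ has full support,'' which is precisely your reduction via parts \eqref{ttt1a} and \eqref{ttt2} applied to $\mu=\sigma$. Your additional care in identifying $U_K^{\sigma^{n-2}}$ as a rotationally invariant kernel $G(\langle x,y\rangle)$ so that the classical equivalence with conditional positive definiteness applies is exactly the point the paper relies on via \eqref{e.riconst} and the discussion preceding Theorem \ref{thm:3pt sphere}.
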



Theorem \ref{t.sph} above shows that  if $\sigma$ is a global minimizer of $I_K$, then the potential $U_K^{\sigma^{n-2}}$ is conditionally positive definite. We do not know whether the converse of this statement holds.  One can show, however, at least for $n=3$ that  if $\sigma$  minimizes $I_{U_K^{\sigma}}$ (in other words, ${U_K^{\sigma}}$ is conditionally positive definite), but fails to  minimize $I_K$, then the global  minimizer of $I_K$ cannot be supported on the whole sphere.

\begin{lemma}
Let $K: ( \mathbb S^{d-1} )^3 \rightarrow \mathbb R$ be a  continuous, symmetric, and rotationally invariant three-input kernel. Assume that $U^\sigma_K$ is conditionally positive definite on the sphere $\mathbb S^{d-1}$ (i.e. $\sigma$ minimizes $I_{U^\sigma_K}$), but at the same time $\sigma$ is not a minimizer of $I_K$. Let $\mu $ be a minimizer of $I_K$. Then $\operatorname{supp} (\mu) \subsetneq \mathbb S^{d-1}$.
\end{lemma}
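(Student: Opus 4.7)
The plan is to argue by contradiction: assume that the minimizer $\mu$ has full support, $\operatorname{supp}(\mu) = \mathbb{S}^{d-1}$, and derive $I_K(\mu) \geq I_K(\sigma)$, which will contradict the hypothesis that $\sigma$ is not a minimizer of $I_K$.

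The first step is to apply Theorem \ref{thm:Constant on Supp} (with $n=3$) to the minimizer $\mu$. Since by assumption $\operatorname{supp}(\mu) = \mathbb{S}^{d-1}$, the necessary condition yields
\[
U_K^{\mu^2}(x) = I_K(\mu) \qquad \text{for every } x \in \mathbb{S}^{d-1}.
\]
Integrating this identity against $\sigma$ produces the mutual energy identity $I_K(\sigma,\mu,\mu) = I_K(\mu)$. The second step invokes the standing assumption that $U_K^\sigma$ is conditionally positive definite on $\mathbb{S}^{d-1}$, which is equivalent to $\sigma$ being a minimizer of the two-input energy $I_{U_K^\sigma}$ (this is the classical characterization on the sphere, recalled above Theorem \ref{thm:3pt sphere}, and is valid here because rotational invariance of $K$ forces $U_K^\sigma$ to be a rotationally invariant two-input kernel). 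Testing this minimization property at $\nu = \mu$ and unfolding the potential via symmetry of $K$ and Fubini gives
\[
I_K(\sigma,\mu,\mu) \;=\; I_{U_K^\sigma}(\mu) \;\geq\; I_{U_K^\sigma}(\sigma) \;=\; I_K(\sigma).
\]

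Chaining the two steps yields $I_K(\mu) = I_K(\sigma,\mu,\mu) \geq I_K(\sigma)$. Combined with the opposite inequality $I_K(\mu) \leq I_K(\sigma)$ (since $\mu$ is a global minimizer of $I_K$), we obtain $I_K(\mu) = I_K(\sigma)$, making $\sigma$ itself a minimizer of $I_K$, in contradiction with the hypothesis that it is not. Hence $\operatorname{supp}(\mu)$ must be a proper subset of $\mathbb{S}^{d-1}$. The proof is essentially bookkeeping: there is no substantial obstacle, only a careful juggling of the three mutual energies $I_K(\sigma)$, $I_K(\mu)$, and $I_K(\sigma,\mu,\mu)$, with the symmetry of $K$ used to identify $I_{U_K^\sigma}(\mu) = I_K(\sigma,\mu,\mu)$ and the full-support hypothesis used to upgrade the "constant on $\operatorname{supp}(\mu)$" conclusion of Theorem \ref{thm:Constant on Supp} to "constant on all of $\mathbb{S}^{d-1}$."
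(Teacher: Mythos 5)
Your proposal is correct and follows essentially the same route as the paper's own proof: apply Theorem \ref{thm:Constant on Supp} under the full-support assumption to get $U_K^{\mu^2} \equiv I_K(\mu)$, integrate against $\sigma$ to identify $I_{U_K^\sigma}(\mu) = I_K(\mu)$, and contrast this with the minimization property of $\sigma$ for $I_{U_K^\sigma}$. The only cosmetic difference is that the paper works directly with the strict inequality $I_K(\mu) < I_K(\sigma)$ to contradict conditional positive definiteness, whereas you chain two weak inequalities to force $I_K(\mu) = I_K(\sigma)$; both are valid.
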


\begin{proof} Assume, by contradiction, that $\operatorname{supp} (\mu) = \mathbb S^{d-1}$. Then, by Theorem \ref{thm:Constant on Supp}, $U_K^{\mu^2} (x) = I_K (\mu)$ for every $x\in \mathbb S^{d-1}$, and therefore, 
$$I_{U_K^\sigma} (\mu)  = I_K (\mu,\mu,\sigma)   = \int_{\mathbb S^{d-1}} U^{\mu^2}_K (x) \, d\sigma (x) = I_K (\mu). $$
On the other hand, obviously,  $ I_K (\sigma ) = I_{U^\sigma_K} (\sigma)$.  Since $\mu$ is a minimizer of $I_K$, and $\sigma$ is not, we have $I_K (\mu) < I_K (\sigma)$. This implies that $I_{U^\sigma_K} (\mu) < I_{U^\sigma_K} (\sigma)$, which contradicts the conditional positive definiteness of ${U^\sigma_K}$. 
\end{proof}

\section{Positive definite kernels}\label{sec:PDK}

Corollary \ref{cor:analytic(uvt)} of  the previous section  already provided a class of $3$-positive definite functions.  In this section we  provide several other  classes of kernels that are (conditionally) $n$-positive definite.  

\subsection{General classes of (conditionally) $n$-positive definite kernels}  We start with some very natural examples, which show how to construct (conditionally) $n$-positive definite kernels from kernels with  fewer inputs. In particular, we show that an $n$-input kernel can be constructed from $m$-input ones, $m<n$, by considering the sum or product over all $m$-element subsets of inputs. We first deal  with the statement about the sum.

\begin{proposition}\label{lem:CondPosDefSum}
Let $2 \leq m \leq n-1$, and suppose $H: \Omega^m \rightarrow \mathbb{R}$ is continuous, symmetric, and conditionally $m$-positive definite. Then
\begin{equation*}
K(z_1, ..., z_n) :=  \sum_{1 \leq j_1 < j_2 < \cdots < j_m \leq n} H(z_{j_1}, z_{j_2}, ..., z_{j_m})
\end{equation*}
is conditionally $n$-positive definite.
\end{proposition}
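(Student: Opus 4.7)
The plan is to apply the definition of conditional $n$-positive definiteness directly: fix arbitrary $z_1,\dots,z_{n-2}\in\Omega$ and a signed measure $\nu\in\mathcal{M}(\Omega)$ with $\nu(\Omega)=0$, and show that $I_{K_{z_1,\dots,z_{n-2}}}(\nu)\ge 0$. The natural approach is to split the defining sum of $K(z_1,\dots,z_{n-2},x,y)$ into four groups according to whether an $m$-subset $\{j_1,\dots,j_m\}\subseteq\{1,\dots,n\}$ contains neither, one, or both of the last two indices $n-1$ and $n$ (corresponding to the variables $x$ and $y$).

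With $x=z_{n-1}$ and $y=z_n$ fixed as the free variables, I would write
\begin{equation*}
K_{z_1,\dots,z_{n-2}}(x,y)=A+B(x)+C(y)+D(x,y),
\end{equation*}
where $A$ collects the terms whose index subset lies entirely in $\{1,\dots,n-2\}$ (hence is independent of $x,y$), $B(x)$ and $C(y)$ collect terms containing exactly one of $z_{n-1},z_n$, and
\begin{equation*}
D(x,y)=\sum_{\{j_1,\dots,j_{m-2}\}\subseteq\{1,\dots,n-2\}}H(z_{j_1},\dots,z_{j_{m-2}},x,y)=\sum_{\{j_1,\dots,j_{m-2}\}\subseteq\{1,\dots,n-2\}}H_{z_{j_1},\dots,z_{j_{m-2}}}(x,y).
\end{equation*}
(If $m=n-1$, the set $A$ is empty; all cases remain well-defined.) The key observation is that integrating against $d\nu(x)\,d\nu(y)$ annihilates the first three summands: the constant $A$ contributes $A\cdot\nu(\Omega)^2=0$, while $B(x)$ and $C(y)$ contribute factors of $\nu(\Omega)=0$ by Fubini.

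Therefore
\begin{equation*}
I_{K_{z_1,\dots,z_{n-2}}}(\nu)=\sum_{\{j_1,\dots,j_{m-2}\}\subseteq\{1,\dots,n-2\}}I_{H_{z_{j_1},\dots,z_{j_{m-2}}}}(\nu),
\end{equation*}
and each summand is nonnegative by conditional $m$-positive definiteness of $H$ (Definition~\ref{def:pd}), since $\nu(\Omega)=0$. This yields $I_{K_{z_1,\dots,z_{n-2}}}(\nu)\ge 0$, proving that $K_{z_1,\dots,z_{n-2}}$ is conditionally positive definite for every choice of $z_1,\dots,z_{n-2}$, which is exactly what is needed.

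There is no real obstacle here; the only point requiring a bit of care is the bookkeeping of the four cases and the verification that, in the edge case $m=n-1$, the decomposition still makes sense (the constant piece simply disappears). The argument does not require $H$ itself to be positive definite in the unconditional sense, since the mean-zero hypothesis on $\nu$ is precisely what kills the lower-order $A$, $B$, $C$ terms.
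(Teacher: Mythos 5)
Your proof is correct and follows essentially the same route as the paper's: both decompose $K(z_1,\dots,z_{n-2},x,y)$ according to how many of the free variables $x,y$ each $m$-subset contains, observe that the terms with fewer than two free variables vanish upon integration against $d\nu(x)\,d\nu(y)$ because $\nu(\Omega)=0$, and conclude from the conditional $m$-positive definiteness of $H$ applied to the remaining terms. The only cosmetic difference is that you separate the single-variable terms into two groups $B(x)$ and $C(y)$ where the paper keeps them together.
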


\begin{proof}
Let $\nu$ be a finite signed Borel measure on $\Omega$ such that $\nu(\Omega) = 0$. Then for any fixed $z_1, ..., z_{n-2} \in \Omega$, since $H$ is conditionally $m$-positive definite, we have

\begin{align*}
\int_{\Omega} \int_{\Omega} K( z_1,& ..., z_{n-2}, x,y  ) d \nu(x) d\nu(y)  = \int_{\Omega} \int_{\Omega}  \sum_{1 \leq j_1 < \cdots < j_{m-2} \leq n-2} H( z_{j_1}, ..., z_{j_{m-2}}, x,y ) d \nu(x) d\nu(y) \\
&  \; \; \; \; \;  + \int_{\Omega} \int_{\Omega}  \sum_{1 \leq k_1 < \cdots < k_{m-1} \leq n-2} \Big( H( z_{k_1}, ..., z_{k_{m-1}}, x) + H(z_{k_1}, ..., z_{k_{m-1}}, y) \Big) d\nu(x) d\nu(y) \\
&  \; \; \; \; \;  + \int_{\Omega} \int_{\Omega} \sum_{1 \leq l_1 <  \cdots < l_{m} \leq n-2} H(z_{l_1}, ..., z_{l_m}) d \nu(x) d\nu(y) \\
& \; \; \;  = \sum_{1 \leq j_1 < \cdots < j_{m-2} \leq n-2} \int_{\Omega} \int_{\Omega}   H(z_{j_1}, ..., z_{j_{m-2}},x,y) d \nu(x) d\nu(y)  \geq 0,
\end{align*}
which shows that $K$ is conditionally $n$-positive definite. 
\end{proof}



We can also prove an analogue of Proposition \ref{lem:CondPosDefSum} for products of positive definite functions. 

\begin{proposition}\label{prop:PosDefProduct}
Let $2 \leq m \leq n-1$ and assume that  $H: \Omega^m \rightarrow \mathbb{R}$ is continuous, symmetric, and $m$-positive definite. If $H$ is a nonnegative function or $m = n-1$, then
\begin{equation*}
K(z_1,..., z_n) = \prod_{1 \leq j_1 < \cdots < j_m \leq n} H(z_{j_1}, ..., z_{j_m})
\end{equation*}
is $n$-positive definite.
\end{proposition}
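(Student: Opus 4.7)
The plan is to fix arbitrary $z_1, \ldots, z_{n-2} \in \Omega$ and show that the two-variable section $K_{z_1, \ldots, z_{n-2}}(x, y)$ is positive definite in $(x,y)$, which is precisely what Definition \ref{def:pd} requires. The key idea is to partition the collection of $m$-element subsets of $\{z_1, \ldots, z_{n-2}, x, y\}$ appearing in the defining product for $K$ according to how many of the variables $x, y$ they contain --- zero, one, or two --- and factor the product accordingly.

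Using symmetry of $H$ to identify the contributions from subsets containing only $x$ with those containing only $y$, this yields a factorization of the form
\begin{equation*}
K_{z_1, \ldots, z_{n-2}}(x, y) = C \cdot \phi(x)\phi(y) \cdot \prod_{\substack{I \subseteq \{1, \ldots, n-2\} \\ |I| = m-2}} H(z_I, x, y),
\end{equation*}
where $C = \prod_{|J| = m,\, J \subseteq \{1, \ldots, n-2\}} H(z_J)$ collects the zero-$\{x,y\}$ contributions, and $\phi(x) = \prod_{|J| = m-1,\, J \subseteq \{1, \ldots, n-2\}} H(z_J, x)$ collects the one-$\{x,y\}$ contributions.

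I would then verify that each of the three factors is positive definite in $(x, y)$. The rank-one factor $\phi(x)\phi(y)$ is automatically positive definite, since integration against $d\nu(x)\,d\nu(y)$ gives $\bigl(\int \phi \, d\nu\bigr)^2 \geq 0$ for every signed measure $\nu$. Each kernel $H(z_I, x, y)$ in the remaining product has exactly $|I| = m - 2$ fixed entries and two free ones, so it is positive definite in $(x, y)$ by the $m$-positive definiteness of $H$; hence their product is positive definite by Lemma \ref{lem:Schur 3pt}. The leading constant $C$ is nonnegative in either branch of the hypothesis: if $H \geq 0$ pointwise, then $C \geq 0$ as a product of nonnegative numbers, while if $m = n-1$ then $m > n-2$ admits no $m$-subsets of $\{z_1, \ldots, z_{n-2}\}$ at all, whence $C = 1$ as an empty product. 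One final application of Lemma \ref{lem:Schur 3pt} combines these three factors into a positive definite kernel.

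The only place where the dichotomous hypothesis really enters is in controlling the sign of the constant $C$: without either $H \geq 0$ or $m = n-1$, the number $C$ could be negative, and multiplication by a negative scalar would destroy positive definiteness. So the split into two cases in the statement is dictated precisely by the need to guarantee $C \geq 0$, and the rest of the argument is a clean application of Schur's product theorem in the guise of Lemma \ref{lem:Schur 3pt}.
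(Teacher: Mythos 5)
Your proposal is correct and follows essentially the same route as the paper's proof: the same factorization of $K_{z_1,\ldots,z_{n-2}}(x,y)$ into the constant block, the rank-one block $\phi(x)\phi(y)$, and the product of genuinely two-variable sections of $H$, with the same use of Lemma \ref{lem:Schur 3pt} and the same observation that the hypothesis is needed only to control the sign of the constant factor. No gaps.
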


\begin{proof}
Fix $z_1,\dots, z_{n-2} \in \Omega$.  We can write 
\begin{align}
\label{prod1} K(z_1,\dots,z_{n-2}, x, y ) & =  \prod_{1\le j_1< \dots<j_m \le n-2}  H (z_{j_1},\dots, z_{j_m}) \\ 
\label{prod2} & \; \; \times    \prod_{1\le j_1< \dots<j_{m-1} \le n-2}  H (z_{j_1},\dots, z_{j_{m-1}}, x)    \\
\label{prod3} & \; \;  \times    \prod_{1\le j_1< \dots<j_{m-1} \le n-2}  H (z_{j_1},\dots, z_{j_{m-1}}, y)    \\
\label{prod4}  & \; \;  \times   \prod_{1\le j_1< \dots<j_{m-2} \le n-2}  H (z_{j_1},\dots, z_{j_{m-2}},x, y) .
\end{align}
Observe that the product in line \eqref{prod1} is non-negative when $H\ge 0$ or if $m=n-1$ (the product is empty in the latter case). The product of lines \eqref{prod2} and \eqref{prod3} is positive definite as a function of $x$ and $y$: indeed, it has the form $F(x,y) = \phi (x) \phi (y)$ and hence $$ I_F (\mu ) = \bigg( \int_\Omega \phi (x) d\mu (x) \bigg)^2 \ge 0$$ for any $\mu \in \mathcal M (\Omega)$. Finally, every factor in the product in line \eqref{prod4} is positive definite as a function of $x$ and $y$, because $H$ is $m$-positive definite. Thus, Schur's product theorem (see Lemma \ref{lem:Schur 3pt}) ensures that the whole product is positive definite as a function of $x$ and $y$, therefore,  $K$ is $n$-positive definite. 
\end{proof}

Propositions \ref{lem:CondPosDefSum} and \ref{prop:PosDefProduct} provide us with large classes of $n$-positive definite kernels. However, these constructions do not exhaust all such kernels. In the following subsection, we provide examples of three-positive definite kernels, which are not obtained from two-input kernels by the methods described above. 

\subsection{Three-positive definite kernels on the sphere} We  also provide some examples of kernels on the unit sphere $\mathbb{S}^{d-1}$. 
We use the same notation  as in Section \ref{sec:sph}: for $x,y,z \in \mathbb S^{d-1}$,  we set $u = \langle x, y \rangle$, $v = \langle y, z \rangle$, and $t = \langle z, x \rangle$.

In Corollary \ref{cor:analytic(uvt)}, we showed that $K=uvt$ is  $3$-positive definite on the sphere. Observe that  this is a specific case of  Proposition \ref{prop:PosDefProduct} above, since $ \langle x, y \rangle$ is a positive definite function on $\mathbb S^{d-1}$.  More generally, Proposition \ref{prop:PosDefProduct}  implies that any kernel of the form $K (x,y,z) = h (u) h(v) h (t)$ is $3$-positive definite, as long as $h$ is a positive definite function on the sphere. 


The kernels considered in Lemmas \ref{lem:PosDefa} and \ref{lem:cPosDefa} are closely related to  the parallelepiped spanned by the vectors $x$, $y$, and $z\in \mathbb S^{d-1}$. Indeed, setting $a = 2$ in \eqref{eq:PosDefa}, one obtains negative volume squared of this parallelepiped: this kernel is not conditionally $3$-positive definite according to  Proposition \ref{prop:notpd1},  even though   $\sigma$ is a minimizer of the corresponding energy, as shown in Theorem \ref{thm:vol squared max}. However, positive definiteness does hold for other values of the parameter $a$.

\begin{lemma}\label{lem:PosDefa}
For $a < 1$, 
\begin{equation}\label{eq:PosDefa}
K(x,y,z) = t^2 + u^2 + v^2 - a uvt + \frac{1}{1-a}
\end{equation}
is $3$-positive definite.
\end{lemma}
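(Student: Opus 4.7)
\smallskip

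My plan is to verify Definition \ref{def:pd} directly. Fix $z \in \mathbb{S}^{d-1}$; by rotational invariance of the kernel we may take $z = e_1$, so it suffices to show that the two-input kernel
\[
K_{e_1}(x,y) = x_1^2 + y_1^2 + \langle x,y\rangle^2 - a\, x_1 y_1 \langle x,y\rangle + \tfrac{1}{1-a}
\]
is positive definite on $\mathbb{S}^{d-1}$. First I would pick an arbitrary $\nu \in \mathcal{M}(\mathbb{S}^{d-1})$ and expand $I_{K_{e_1}}(\nu)$ in terms of the second moments $M_{ij} := \int x_i x_j\, d\nu(x)$ and the total mass $c := \nu(\mathbb{S}^{d-1})$. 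Using $\langle x,y\rangle^2 = \sum_{i,j} (x_i x_j)(y_i y_j)$ and $x_1 y_1 \langle x,y\rangle = \sum_i (x_1 x_i)(y_1 y_i)$, Fubini gives
\[
\int\!\!\int \langle x,y\rangle^2 \,d\nu(x)d\nu(y) = \sum_{i,j} M_{ij}^2, \qquad \int\!\!\int x_1 y_1\langle x,y\rangle \,d\nu(x)d\nu(y) = \sum_i M_{1i}^2,
\]
while $\int\!\!\int x_1^2\, d\nu(x)d\nu(y) = \int\!\!\int y_1^2\, d\nu(x)d\nu(y) = c\,M_{11}$ and the constant contributes $c^2/(1-a)$.

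Collecting terms and separating the index $1$ from the rest yields
\[
I_{K_{e_1}}(\nu) = (1-a) M_{11}^2 + 2c\,M_{11} + \tfrac{c^2}{1-a} + (2-a) \sum_{i\ge 2} M_{1i}^2 + \sum_{i,j\ge 2} M_{ij}^2.
\]
The key observation is that, because $a < 1$, the first three terms complete the square:
\[
(1-a) M_{11}^2 + 2c M_{11} + \tfrac{c^2}{1-a} = \Bigl(\sqrt{1-a}\,M_{11} + \tfrac{c}{\sqrt{1-a}}\Bigr)^2 \geq 0.
\]
The coefficient $2-a$ is positive (since $a < 1 < 2$), and the remaining term $\sum_{i,j\ge 2} M_{ij}^2$ is a sum of squares, so $I_{K_{e_1}}(\nu) \geq 0$ for every $\nu$. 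By Definition \ref{def:2pd}, $K_{e_1}$ is positive definite; by rotational invariance, $K_z$ is positive definite for every $z \in \mathbb{S}^{d-1}$, establishing that $K$ is $3$-positive definite.

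I do not expect any substantial obstacle here: the calculation is purely algebraic once the moment expansion is set up. The only subtlety is recognizing the role of the constant $\tfrac{1}{1-a}$ -- it is precisely what allows the isolated $x_1^2 + y_1^2$ terms (which on their own fail to form a positive definite kernel in $(x,y)$) to combine with the constant into a perfect square, and this is what forces the particular form of the additive constant in the statement.
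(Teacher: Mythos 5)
Your proof is correct and is essentially the same argument as the paper's: both reduce to $z=e_1$ by rotational invariance and arrive at the identical sum-of-squares decomposition $\sum_{j,k\ge 2}M_{jk}^2+(2-a)\sum_{m\ge2}M_{1m}^2+\bigl(\sqrt{1-a}\,M_{11}+\tfrac{c}{\sqrt{1-a}}\bigr)^2$, the only cosmetic difference being that the paper completes the square at the level of the kernel (writing $K_{e_1}$ as a nonnegative combination of products $\phi(x)\phi(y)$) while you do it after integrating against $\nu\otimes\nu$.
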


\begin{proof}
Due to rotational invariance, we need only check one value of $z$. Let $z = e_1$. We have that
\begin{align*}
K(x,y,e_1) & = \langle x, y \rangle^2 + x_1^2 + y_1^2 - a x_1 y_1 \langle x,y \rangle + \frac{1}{1-a} \\
& = \Bigg( \langle x, y \rangle^2 - a x_1 y_1 \langle x, y \rangle - (1-a) x_1^2 y_1^2 \Bigg) + (1-a) x_1^2 y_1^2 + x_1^2 + y_1^2 + \frac{1}{1-a} \\
& = \Bigg( \langle x, y \rangle^2 - a x_1 y_1 \langle x, y \rangle - (1-a) x_1^2 y_1^2 \Bigg) + \Big( x_1^2 \sqrt{1-a}  + \frac{1}{\sqrt{1-a}} \Big) \Big( y_1^2 \sqrt{1-a} + \frac{1}{\sqrt{1-a}} \Big) \\
& = \sum_{j=2}^{d} \sum_{k=2}^{d} x_j y_j x_k y_k + (2-a) \sum_{m=2}^{d} x_1 y_1 x_m y_m + \Big( x_1^2 \sqrt{1-a}  + \frac{1}{\sqrt{1-a}} \Big) \Big( y_1^2 \sqrt{1-a} + \frac{1}{\sqrt{1-a}} \Big). \\
\end{align*}
We quickly see that for any finite signed Borel measure $\nu \in \mathcal{M}(\mathbb{S}^{d-1}$),
\begin{align*}
\int_{\mathbb{S}^{d-1}} \int_{\mathbb{S}^{d-1}} K(x,y, e_1) d \nu(x) d \nu(y) &= \sum_{j=2}^{d} \sum_{k=2}^{d} \Big( \int_{\mathbb{S}^{d-1}} x_j x_k d\nu(x) \Big)^2 + (2-a)\sum_{m=2}^{d} \Big( \int_{\mathbb{S}^{d-1}} x_1 x_m d\nu(x) \Big)^2 \\
& \; \; \; \; \; + \Big( \int_{\mathbb{S}^{d-1}} \Big( x_1^2 \sqrt{1-a}  + \frac{1}{\sqrt{1-a}} \Big) d \nu(x) \Big)^2 \geq 0,
\end{align*}
hence, $K$ is $3$-positive definite. 
\end{proof}

\begin{lemma}\label{lem:cPosDefa}
For $ a \leq 1$, $K(x,y,z) =  t^2 + u^2 + v^2 - a uvt$ is conditionally $3$-positive definite.
\end{lemma}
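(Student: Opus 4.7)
The plan is to reduce the claim to Lemma \ref{lem:PosDefa} in two stages. For $a<1$, Lemma \ref{lem:PosDefa} already asserts that the kernel $K_a(x,y,z) + \tfrac{1}{1-a}$ is $3$-positive definite, hence conditionally $3$-positive definite. A constant kernel is trivially conditionally $3$-positive definite, since any finite signed measure $\nu$ with $\nu(\mathbb{S}^{d-1}) = 0$ satisfies $\iint c\, d\nu\, d\nu = c\cdot \nu(\mathbb{S}^{d-1})^2 = 0$. Hence by the additivity part of Lemma \ref{lem:Schur 3pt}, subtracting the constant $\tfrac{1}{1-a}$ preserves conditional $3$-positive definiteness and gives the conclusion for every $a<1$.

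The endpoint $a=1$ then follows from a uniform limit: as $a \to 1^-$, the kernels $K_a(x,y,z) = t^2+u^2+v^2 - a uvt$ converge uniformly to $K_1$ on the compact set $(\mathbb{S}^{d-1})^3$, so the closure of conditionally $3$-positive definite kernels under uniform limits (again Lemma \ref{lem:Schur 3pt}) finishes the argument.

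Alternatively, one can give a single unified proof covering $a\le 1$ (including $a=1$ directly) by revisiting the sum-of-squares decomposition used in the proof of Lemma \ref{lem:PosDefa}. Fixing $z=e_1$ by rotational invariance and subtracting the constant $\tfrac{1}{1-a}$, one finds that for any $\nu$ with $\nu(\mathbb{S}^{d-1})=0$, the double integral $\iint K(x,y,e_1)\, d\nu(x)\, d\nu(y)$ collapses to a nonnegative combination of squares whose coefficients are $1$, $2-a$, and $1-a$ (the pesky $\frac{1}{\sqrt{1-a}}$ term from the rank-one factorization vanishes because $\nu$ has total mass zero, so the only surviving contribution from that factor is $(1-a)\bigl(\int x_1^2\, d\nu\bigr)^2$). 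Both $2-a$ and $1-a$ are nonnegative precisely when $a\le 1$, handling the endpoint simultaneously with the interior.

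I do not anticipate any serious obstacle: the algebraic identity underlying the proof was already extracted in Lemma \ref{lem:PosDefa}, and passing from positive definiteness to conditional positive definiteness merely amounts to stripping off an additive constant, which is exactly what the zero-mass condition on $\nu$ permits.
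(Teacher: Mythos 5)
Your first argument is exactly the paper's proof: for $a<1$ apply Lemma \ref{lem:PosDefa} and observe that the additive constant $\tfrac{1}{1-a}$ contributes nothing against a zero-mass measure, then handle $a=1$ by the uniform-limit closure in Lemma \ref{lem:Schur 3pt}. Your alternative unified sum-of-squares argument is also correct (provided one works with the expanded form $(1-a)x_1^2y_1^2+x_1^2+y_1^2$ rather than the $\sqrt{1-a}$ factorization at the endpoint), but it is not needed beyond what the two-step argument already gives.
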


\begin{proof}
For $a <1$, according to Lemma \ref{lem:PosDefa}, $K+ \frac{1}{1-a}$ is  $3$-positive definite.  Thus, for any fixed $z\in \mathbb S^{d-1}$ and  any $\nu \in \mathcal M (\mathbb S^{d-1})$ with $\nu (\mathbb S^{d-1}) =0$, $$ I_{K_z}  (\nu ) = I_{K_z  + \frac{1}{1-a}} (\nu) \ge 0,$$ i.e. $K$ is conditionally $3$-positive definite. 
Lemma \ref{lem:Schur 3pt} then gives the result for $a = 1$.
\end{proof}

\subsection{Some counterexamples}\label{sec:notnpd}

While our results provide new and less complicated  means to determine minimizers for a wide range of kernels, it is clear that more general ideas are necessary to categorize  all kernels on the sphere for which $\sigma$ is a minimizer.  In this  subsection, we present  naturally arising kernels on the sphere which are {\emph{not}} $3$-positive definite on the sphere, but yet the three-input energies generated by these kernels are minimized by the uniform measure $\sigma$. 

The semidefinite programming methods of Bachoc and Vallentin \cite{BV} 
are more computationally difficult than ours, and would likely be infeasible for non-polynomial  kernels in the context relevant to this paper. 
At the same time, they apply to  certain functions which are not covered by our methods from Section \ref{sec:sph}. In particular, an appropriate version of semidefinite programming implies  that  the energies with the following kernels (we keep the notation introduced in \cite{BV})  
\begin{equation}\label{eq:SemiDefS011}
S^d_{0,1,1}(x,y,z) = uv + vt + tu
\end{equation}
and
\begin{equation}\label{eq:SemiDefS100}
S^d_{1,0,0}(x,y,z) = (t - uv) + (u - vt) + (v - tu)
\end{equation}
are both minimized by $\sigma$, see \cite{BFGMPV}. However, neither function  is conditionally $3$-positive definite, as we demonstrate below. This implies that the converse to Theorem \ref{thm:3pt sphere} does not hold.  In addition, the potential of both kernels with respect to $\sigma$ is a positive definite two-input kernel, which provides evidence  that this might indeed be the correct necessary and sufficient condition for $\sigma$ to minimize the three-input energy (see the discussion before Theorem \ref{t.sph}). 

The former example \eqref{eq:SemiDefS011} is particularly interesting, since  
 the energy functional with this kernel is convex at the minimizer $\sigma$, which suggests that conditional $n$-positive definiteness and convexity of the energy functional are perhaps not equivalent for $n\ge 3$, unlike the two-input case (see Proposition \ref{prop:ConvexCPDEqual2}).  We summarize these properties in the following proposition: 
  
\begin{proposition}\label{prop:convnotpd}
Let $\Omega = \mathbb S^{d-1}$ and set $$ K (x,y,z) = S^d_{0,1,1} (x,y,z) = uv + vt + tu.$$ The kernel $K$ satisfies the following:
\begin{enumerate}[(i)]
\item\label{k1} the uniform measure $\sigma$ minimizes  the energy $I_K$,
\item\label{k2}  the energy functional $I_K$ is convex at $\sigma$,
\item\label{k2a} $U_{K}^{\sigma}(x,y) $ is positive definite,
\item\label{k3}  $K$ is not conditionally $3$-positive definite.
\end{enumerate}
\end{proposition}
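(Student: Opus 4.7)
The plan is to address parts (iii), (i), (ii), and (iv) in this order, since a single preliminary computation feeds all four claims. For any signed Borel measure $\nu$ on $\mathbb{S}^{d-1}$, set $m(\nu) := \int x \, d\nu(x)$. Expanding $K(x,y,z) = \langle x,y\rangle\langle y,z\rangle + \langle y,z\rangle\langle z,x\rangle + \langle z,x\rangle\langle x,y\rangle$ and applying Fubini together with the identities $\int z \, d\sigma(z) = 0$ and $\int z z^{\top} d\sigma(z) = \tfrac{1}{d} I_d$, I would derive
\begin{equation*}
U_K^\sigma(x,y) = \tfrac{1}{d}\langle x,y\rangle, \quad I_K(\sigma) = 0, \quad I_K(\sigma^2,\nu) = 0, \quad I_K(\sigma,\nu^2) = \tfrac{1}{d}|m(\nu)|^2,
\end{equation*}
\begin{equation*}
I_K(\nu) = 3\int_{\mathbb{S}^{d-1}} \langle m(\nu), y\rangle^2 \, d\nu(y).
\end{equation*}
Part (iii) is then immediate, since $\langle x,y\rangle$ is positive definite on $\mathbb{S}^{d-1}$. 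For part (i), the last formula gives $I_K(\nu) \geq 0$ for every $\nu \in \mathbb{P}(\mathbb{S}^{d-1})$, while $m(\sigma) = 0$ yields $I_K(\sigma) = 0$; hence $\sigma$ is a global minimizer.

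For part (ii), the polynomial $f(t) := I_K((1-t)\sigma + t\nu)$ expands via the trinomial theorem, using the identities above, into $f(t) = \tfrac{3(1-t)t^2}{d}|m(\nu)|^2 + 3t^3 P$, where $P := \int \langle m(\nu), y\rangle^2 d\nu(y)$. The convexity inequality $f(t) \leq (1-t) I_K(\sigma) + t I_K(\nu) = 3tP$ reduces, after dividing by $3t > 0$, to $t|m(\nu)|^2/d \leq (1+t)P$. If $m(\nu)=0$ both sides vanish; otherwise Jensen's inequality applied to $y \mapsto \langle m(\nu), y\rangle$ on the probability measure $\nu$ yields $P \geq |m(\nu)|^4 > 0$, so the right-hand side strictly exceeds the left-hand side at $t=0$, and continuity in $t$ produces a $t_\nu > 0$ for which the inequality holds on $[0, t_\nu)$.

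Part (iv) is a direct computation. Fixing $z = e_1$ yields $K_{e_1}(x,y) = \langle x,y\rangle(x_1 + y_1) + x_1 y_1$. The signed measure $\nu = \delta_{-e_1} - \delta_{e_2}$ satisfies $\nu(\mathbb{S}^{d-1}) = 0$, and evaluating the three kernel values gives $K_{e_1}(-e_1,-e_1) = -1$, $K_{e_1}(-e_1,e_2) = 0$, and $K_{e_1}(e_2,e_2) = 0$, whence $I_{K_{e_1}}(\nu) = -1 - 2 \cdot 0 + 0 = -1 < 0$, so $K_{e_1}$ is not conditionally positive definite.

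The main obstacle lies in part (ii): the naive global inequality $|m(\nu)|^2/d \leq P$ fails for $\nu$ concentrated near a great circle orthogonal to $m(\nu)$, so one cannot hope to prove convexity along the full segment from $\sigma$ to $\nu$, and must instead exploit that convexity at $\sigma$ is a local condition near $t = 0$. Jensen's inequality, which gives $P \geq |m(\nu)|^4$ rather than the stronger $P \geq |m(\nu)|^2/d$, supplies exactly the strict positivity at $t=0$ needed to extract the neighborhood $[0, t_\nu)$.
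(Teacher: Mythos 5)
Your proposal is correct and follows essentially the same route as the paper: the same identities $U_K^\sigma(x,y)=\tfrac1d\langle x,y\rangle$ and $I_K(\nu)=3\int\langle m(\nu),y\rangle^2\,d\nu(y)$, the same cubic expansion of $t\mapsto I_K((1-t)\sigma+t\nu)$ for convexity at $\sigma$, and the same two-point signed measure (up to sign) at $z=e_1$ for the failure of conditional $3$-positive definiteness. The only cosmetic difference is that you make $I_K(\sigma,\nu^2)=\tfrac1d|m(\nu)|^2$ explicit and use Jensen to split on $m(\nu)=0$ versus $m(\nu)\neq 0$, where the paper splits on $I_K(\nu)=0$ versus $I_K(\nu)>0$ — these case distinctions coincide.
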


\begin{proof}
As mentioned above, part \eqref{k1} follows from the semidefinite programming method \cite{BFGMPV}, 
however, there is also a simple direct proof of this fact. Observe that by symmetry, for any $\nu \in \mathbb P (\mathbb S^{d-1})$, 
\begin{equation}\label{eq:S011min}
I_K (\nu) = 3 \int_{\mathbb S^{d-1}}  \bigg(  \int_{\mathbb S^{d-1}}  \langle x,  y \rangle d\nu (x)  \bigg) ^2 d\nu (y)  \ge 0 = I_K (\sigma). 
\end{equation}

We now turn to parts \eqref{k2}--\eqref{k2a}. We first note that $$U_{K}^{\sigma}(x,y) = \int_{\mathbb S^{d-1}}  \langle  z,x \rangle \langle y,z \rangle d\sigma (z) = \frac{1}{d} \langle x, y \rangle,$$ which can be proved using the Funk--Hecke formula or by a direct computation (see, e.g., \cite{BDM}).  Hence, the kernel  $U_{K}^{\sigma}(x,y) $ is positive definite,  i.e. \eqref{k2a} holds. Therefore $\sigma$ minimizes the two-input energy with this kernel,  i.e.,   for any $\nu \in \mathbb P (\mathbb S^{d-1})$,
$$ I_{U_{K}^{\sigma}} (\nu) = I_K (\nu,\nu, \sigma) \ge  I_{U_{K}^{\sigma}} (\sigma) = I_K (\sigma) = 0. $$
Observe also that  $U_{K}^{\sigma^2}(x) = 0$ and thus $I_K (\sigma, \sigma , \nu ) =0$.

For an arbitrary $\nu \in \mathbb P (\mathbb S^{d-1})$ and $t\in [0,1]$, define $\sigma_t = (1-t) \sigma + t \nu$. Then 
\begin{align*}
I_K (\sigma_t) & = (1-t)^3 I_K (\sigma) + 3(1-t)^2 t I_K (\sigma, \sigma, \nu) + 3(1-t)t^2 I_K  (\nu,\nu ,\sigma) + t^3 I_K (\nu)\\
& =  3(1-t)t^2 I_K  (\nu,\nu ,\sigma) + t^3 I_K (\nu).
\end{align*}

If $I_K (\nu ) >0$, we can choose $t_\nu$ so small that for all $t\in (0, t_\nu)$, we have $I_K (\nu,\nu,\sigma ) \le \frac{1+t}{3t} I_K (\nu)$, since the right-hand side goes to $+\infty$ as $t\rightarrow 0$. Then
$$  I_K (\sigma_t) \le (1-t^2)t I_K  (\nu) + t^3 I_K (\nu) = t I_K (\nu) = t I_K (\nu) + (1-t) I_K (\sigma).$$

It remains to consider the case $I_K (\nu) =0$. According to \eqref{eq:S011min}, in this situation,  $\displaystyle{\int_{\mathbb S^{d-1}}   \langle x,  y \rangle d\nu (x)  = 0}$ for $\nu$-a.e. $y \in \mathbb S^{d-1}$, and therefore
$$  \int_{\mathbb S^{d-1}} \int_{\mathbb S^{d-1}}   \langle x,  y \rangle d\nu (x) d\nu (y) = 0.$$
But this implies that $$ I_K (\nu,\nu,\sigma) = I_{U_{K}^{\sigma}} (\nu) =   \int_{\mathbb S^{d-1}} \int_{\mathbb S^{d-1}} \frac{1}{d}   \langle x,  y \rangle d\nu (x) d\nu (y) = 0. $$
Thus, when $I_K (\nu) =0$, we have 
$$ I_K (\sigma_t ) =  3(1-t)t^2 I_K  (\nu,\nu ,\sigma) + t^3 I_K (\nu) = 0 = (1-t)I_K (\sigma) + t I_K (\nu)$$
for all $t\in [0,1]$. This finishes the proof  that $I_K$ is convex at $\sigma$. \\

Finally, we  show that $I_{K}$ is not conditionally $3$-positive definite, i.e. part \eqref{k3}. Taking $\mu = \delta_{e_2} - \delta_{-e_1}$ and $z= e_1$, a straightforward computation  shows that 
$$ I_{K_z}(\mu) = I_K (\delta_{e_1}, \mu, \mu) =  -1 < 0,$$
which proves  our claim.
\end{proof}

The behavior of the kernel  $S^d_{1,0,0}$ is somewhat different. Since
$$I_{S^d_{1,0,0}}( \delta_{e_1}, \delta_{e_1}, \sigma) = \int_{\mathbb{S}^{d-1}} (1 - z_1^2) d \sigma(z) > 0 =  I_{S^d_{1,0,0}}(\sigma) = I_{S^d_{1,0,0}}(\sigma, \sigma, \delta_{e_1}) = I_{S^d_{1,0,0}}(\delta_{e_1}),$$
we see that for all $t \in (0,1)$,
$$I_{S^d_{1,0,0}}( t \delta_{e_1} + (1-t) \sigma ) = 3 t^2 (1-t) I_{S^d_{1,0,0}}( \delta_{e_1}, \delta_{e_1}, \sigma)  >  t I_{S^d_{1,0,0}}(\delta_{e_1}) + (1-t)I_{S^d_{1,0,0}}(\sigma),$$
so $I_{S^d_{1,0,0}}$ is not convex at $\sigma$, and therefore not conditionally $3$-positive definite, according to Corollary \ref{cor:pdconvex}. In particular,  this shows that convexity of $I_K$ at $\sigma$ and the fact that $\sigma$ is a minimizer of $I_K$  are not equivalent for three-input energies, unlike in the classical two-input case \cite{BMV}.\\

In the next subsection we introduce, two more three-input  kernels with a geometric flavor, which have similar properties:  they also fail to be $3$-positive definite, yet the  corresponding energies are minimized by  the uniform measure $\sigma$.

\subsection{Energies with geometric kernels, which are optimized by the uniform surface measure.}\label{sec:minsigma}

Riesz energies with the kernel $K (x,y) = \| x- y\|^\alpha$   are one of the most important classes of two-input energies. In particular, when $\alpha =1$,   maximizing the sum of distances between points  or the corresponding distance integrals is a classical optimization problem of metric geometry \cite{AS, Bj, F1}. 
One can construct interesting  multi-input  analogues of Riesz energies 
by replacing the distance with other geometric characteristics which depend  on $n$ points, such as area and volume. 
For $n=3$, some of the most natural examples include the area of the triangle generated by three points or the volume of the tetrahedron (or the parallelepiped)   spanned by three vectors. This can be generalized to higher values of $n$ by considering volumes of various simplices or polytopes generated by $n$ points or vectors. \\

It is reasonable to conjecture that on the sphere,  energy integrals with  these  three-input kernels (namely, the area of the triangle and the volume of the parallelepiped) are {\emph{maximized}} by the uniform measure $\sigma$. Probabilistically, this can be reformulated in the following way:  assume that three random points are chosen on the sphere $\mathbb S^{d-1}$ independently according to a probability distribution $\mu$.  The conjecture then states that the expected value of these geometric quantities is maximized when the distribution $\mu$ is uniform, i.e. $\mu = \sigma$. The question was posed in this form in \cite{Ro}. 

This conjecture is supported, among other reasons, by the fact that for the classical case $n=2$, the analogous kernels  $ | \sin (\arccos \langle x,y \rangle)| = \sqrt{1-u^2} $ and $  \| x- y \| = \sqrt{2-2u}$ (i.e. the area of the parallelogram   and the Euclidean distance, respectively) are both negative definite kernels on the sphere (up to an additive constant), and hence the corresponding two-input energies are maximized by $\sigma$.

In this section, we verify the conjecture above for slightly different, yet closely related kernels $V^2$ and $A^2$: the {\emph{squares}} of the said volume and area. In these cases, the kernels are multivariate polynomials, which substantially simplifies the analysis. Theorems \ref{thm:vol squared max} and  \ref{thm: triangle area squared max} show that the three-input energies $I_{V^2}$ and $I_{A^2}$ are maximized by the uniform surface measure $\sigma$. 

Despite the fact that $\sigma$ is a minimizer of  $I_{-V^2}$ and $I_{-A^2}$, we shall show in Propositions \ref{prop:notpd1} and \ref{prop:notpd2} that both kernels $ -V^2$ and $ -A^2$ fail to be  conditionally $3$-positive definite, which provides yet another proof that the converse to Theorem \ref{thm:3pt sphere} does not hold, unlike in  the two-input case.

While in the present paper we only touch upon these questions tangentially, a much more thorough investigation of such geometric problems is undertaken in our paper \cite{BFGMPV}.




\subsubsection{Volume of  the tetrahedron/parallelepiped} Let $V(x,y,z)$ denote  the three-dimensional volume of the parallelepiped spanned by the vectors $x$, $y$, $z\in \mathbb S^{d-1}$.  (Observe  that the volume of the tetrahedron   with vertices at $x$, $y$, $z$, and the origin is  $\frac16  V(x,y,z)$.)  The square of the volume $V(x,y,z)$ is given by the determinant of the Gram matrix. Thus we consider the kernel

\begin{equation}\label{eq:volsq2}
  V^2 (x,y,z) = \det\begin{pmatrix} 1 & u & v \\ u & 1 & t \\ v & t & 1 \end{pmatrix}=1-u^2-v^2-t^2+2uvt,
\end{equation}
where, as before,  we set $u =  \langle x,y \rangle$, $ v=  \langle y,z \rangle$, $t =  \langle z,x \rangle$. 
We have the following statement.

\begin{theorem}\label{thm:vol squared max}
Assume that $d\ge 3$ and $\Omega = \mathbb S^{d-1}$. Let $V^2(x,y,z) = 1 - t^2 - u^2 - v^2 + 2uvt$ be the square of the volume of the parallelepiped spanned by the vectors $x$, $y$, $z\in \mathbb S^{d-1}$. Then $\sigma$ is a maximizer of $I_{V^2}$ over $\mathbb{P}(\mathbb{S}^{d-1})$.
\end{theorem}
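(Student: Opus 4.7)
The plan is to reduce the maximization to a classical symmetric-function inequality by expressing $I_{V^2}(\mu)$ in closed form through the second-moment matrix $M = M(\mu) := \int_{\mathbb{S}^{d-1}} x x^\top \, d\mu(x)$. This matrix is positive semidefinite with $\operatorname{tr} M = 1$ (since $\|x\|=1$), so its eigenvalues $\lambda_1, \ldots, \lambda_d$ form a probability vector on $\{1, \ldots, d\}$.

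First I would expand the Gram-determinant kernel $V^2(x,y,z) = 1 - u^2 - v^2 - t^2 + 2 u v t$ and integrate monomial by monomial. A short Fubini calculation using $\langle x,y\rangle = \sum_i x_i y_i$ yields
\[
\int\int \langle x,y\rangle^2 \, d\mu(x)\, d\mu(y) = \operatorname{tr}(M^2), \qquad \int\int\int \langle x,y\rangle \langle y,z\rangle \langle z,x\rangle \, d\mu(x)\, d\mu(y)\, d\mu(z) = \operatorname{tr}(M^3),
\]
so that, by symmetry in $u,v,t$,
\[
I_{V^2}(\mu) = 1 - 3 \operatorname{tr}(M^2) + 2 \operatorname{tr}(M^3) = 1 - 3 p_2 + 2 p_3,
\]
where $p_k := \sum_i \lambda_i^k$.

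Next, since $p_1 = 1$, Newton's identities give $p_2 = 1 - 2 e_2$ and $p_3 = 1 - 3 e_2 + 3 e_3$, so the constant and $e_2$ terms cancel and
\[
I_{V^2}(\mu) = 6\, e_3(\lambda_1, \ldots, \lambda_d).
\]
Maclaurin's inequality (applied with $e_1 = 1$) then yields $e_3(\lambda)/\binom{d}{3} \leq (1/d)^3$, with equality iff all $\lambda_i$ are equal, hence
\[
I_{V^2}(\mu) \leq 6 \binom{d}{3}/d^3 = \frac{(d-1)(d-2)}{d^2}.
\]
Rotational invariance forces $M(\sigma) = I/d$, so $\lambda_i(\sigma) = 1/d$ and $\sigma$ attains this upper bound, which gives the theorem.

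The one step that needs care is spotting the reduction $1 - 3 p_2 + 2 p_3 = 6 e_3$, since without it one is left with the non-convex optimization of $2\sum \lambda_i^3 - 3\sum \lambda_i^2$ on the simplex (which can still be handled by a tangent-line argument, but less cleanly). Notably, the potential-theoretic machinery of Sections \ref{sec:min}--\ref{sec:sph} cannot be invoked directly, because by Proposition \ref{prop:notpd1} the kernel $-V^2$ fails to be conditionally $3$-positive definite; the determinantal (Gram) structure of $V^2$ is precisely what makes Maclaurin's inequality available.
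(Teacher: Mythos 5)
Your proof is correct, and it is worth noting that the paper itself does not actually prove Theorem \ref{thm:vol squared max}: it only cites Rankin \cite{R}, Cahill--Casazza \cite{CC}, and the follow-up work \cite{BFGMPV} for the (more general) statement about Gram determinants of $n$ vectors. So you have supplied a complete, self-contained argument where the paper defers to external references. The key steps all check out: the moment-matrix identities $I_{u^2}(\mu)=\operatorname{tr}(M^2)$ and $I_{uvt}(\mu)=\sum_{i,j,k}M_{ij}M_{jk}M_{ki}=\operatorname{tr}(M^3)$ are correct by Fubini; the reduction $1-3p_2+2p_3=6e_3$ is exactly the expansion of $\sum_{i,j,k\ \mathrm{distinct}}\lambda_i\lambda_j\lambda_k$ under $p_1=1$; Maclaurin's inequality applies because $M\succeq 0$ forces $\lambda_i\ge 0$; and $M(\sigma)=I/d$ follows from rotational invariance, so $\sigma$ attains the bound $\tfrac{(d-1)(d-2)}{d^2}$ (which agrees with a direct check, e.g.\ $2/9$ for $d=3$). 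Your closing remark is also consistent with the paper: Proposition \ref{prop:notpd1} shows $-V^2$ is not conditionally $3$-positive definite, so Theorem \ref{thm:3pt sphere} is indeed unavailable here. Beyond proving the theorem, your argument yields two bonuses the statement does not ask for: the explicit maximal value, and a characterization of \emph{all} maximizers as the isotropic measures (those with $M(\mu)=I/d$), since equality in Maclaurin forces all eigenvalues equal; this matches the frame-theoretic perspective of \cite{CC}. The only cosmetic caveat is that Maclaurin's inequality is usually stated for strictly positive reals, so you should note that the case of vanishing eigenvalues is handled by continuity (or observe directly that $e_3\le\binom{d}{3}(e_1/d)^3$ with equality only when all $\lambda_i$ coincide holds for nonnegative arguments as well).
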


In fact, this theorem also holds for  the $n$-input kernel $K (x_1,\ldots,x_n)$ defined as the determinant of the Gram matrix of the set of vectors $\{x_1,\ldots,x_n\}\subset \mathbb S^{d-1}$ with $d\ge n \ge 3$. This statement is essentially contained in the works of  Rankin \cite[1956]{R} ($n=d$) and of  Cahill and Casazza \cite{CC} (for $d\ge n $).  A comprehensive exposition is presented in our paper \cite{BFGMPV}.

\subsubsection{Area of the triangle} We now turn to the discussion of   the area $A(x,y,z)$  of the triangle with vertices $x$, $y$, and $z\in \mathbb S^{d-1}$.  It is  a standard geometrical fact  that
\begin{equation}
 A^2 (x,y,z)  = \frac14 \big( \| y-x \|^2 \cdot \| z-x \|^2 - \langle y-x, z-x \rangle^2  \big).
\end{equation}
 A straightforward computation then  shows that 
\begin{equation}\label{eq:asq}
 A^2 (x,y,z)   =   \frac34  -\frac12 (u+v+t) + \frac12 (uv + vt +tu) - \frac{1}4 (  u^2 + v^2 + t^2).
 \end{equation}
One could also deduce this identity from Heron's formula.  We are now ready to prove that the expectation of the area of the triangle {\it{squared}} is maximized by  the uniform surface measure $\sigma$ on the sphere $\mathbb S^{d-1}$.

\begin{theorem}\label{thm: triangle area squared max}
Suppose $d \geq 2$, and let $ A^2(x,y,z) $ be the square of the area of the triangle  with vertices at  $x$, $y$, $z\in \mathbb S^{d-1}$. Then the uniform surface measure $\sigma$ maximizes $I_{A^2} (\mu)$ over $\mathbb{P} (\mathbb{S}^{d-1})$. 
\end{theorem}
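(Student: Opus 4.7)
The plan is to prove the theorem by a direct computation, exploiting the fact that $A^{2}(x,y,z)$ is a polynomial of degree at most $2$ in the pairwise inner products $u$, $v$, $t$, so that the energy $I_{A^{2}}(\nu)$ depends on $\nu$ only through its first two moments. The general machinery of Section~\ref{sec:sph} does not apply directly here: Proposition~\ref{prop:notpd2} (stated later in the paper) will show that $-A^{2}$ is not conditionally $3$-positive definite, so Theorem~\ref{thm:3pt sphere} is unavailable, and Theorem~\ref{t.sph}(\ref{i2}) would only yield local minimality of $\sigma$ for $I_{-A^{2}}$. A direct polynomial expansion is therefore needed for the global statement.

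To carry this out, I would introduce, for any $\nu\in\mathbb{P}(\mathbb{S}^{d-1})$, the mean vector $m=\int x\,d\nu(x)\in\mathbb{R}^{d}$ and the symmetric positive semidefinite second moment matrix $M=\int x x^{T}\,d\nu(x)\in\mathbb{R}^{d\times d}$, which satisfies $\operatorname{tr}(M)=1$. A short Fubini computation, together with the permutation symmetry of the integrand, yields
\begin{align*}
\int\!\!\int\!\!\int (u+v+t)\,d\nu^{3} &= 3\|m\|^{2}, \\
\int\!\!\int\!\!\int (uv+vt+tu)\,d\nu^{3} &= 3\,m^{T}Mm, \\
\int\!\!\int\!\!\int (u^{2}+v^{2}+t^{2})\,d\nu^{3} &= 3\|M\|_{F}^{2}.
\end{align*}
Plugging these into the expansion \eqref{eq:asq} I obtain
$$I_{A^{2}}(\nu) \;=\; \tfrac{3}{4} \;-\; \tfrac{3}{2}\|m\|^{2} \;+\; \tfrac{3}{2}\,m^{T}Mm \;-\; \tfrac{3}{4}\|M\|_{F}^{2}.$$
For $\nu=\sigma$ one has $m=0$ and $M=\tfrac{1}{d}I$, hence $I_{A^{2}}(\sigma)=\tfrac{3}{4}\bigl(1-\tfrac{1}{d}\bigr)$.

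The desired inequality $I_{A^{2}}(\nu)\leq I_{A^{2}}(\sigma)$ then rearranges to
$$I_{A^{2}}(\sigma)-I_{A^{2}}(\nu) \;=\; \tfrac{3}{2}\,m^{T}(I-M)m \;+\; \tfrac{3}{4}\Bigl(\|M\|_{F}^{2}-\tfrac{1}{d}\Bigr) \;\geq\; 0.$$
Both summands are manifestly non-negative: since $M$ is positive semidefinite with $\operatorname{tr}(M)=1$, its eigenvalues lie in $[0,1]$, so $I-M$ is positive semidefinite; and by the Cauchy--Schwarz inequality applied to the eigenvalues $\lambda_{1},\dots,\lambda_{d}$ of $M$,
$$\|M\|_{F}^{2}=\sum_{i=1}^{d}\lambda_{i}^{2} \;\geq\; \tfrac{1}{d}\Bigl(\sum_{i=1}^{d}\lambda_{i}\Bigr)^{2}=\tfrac{1}{d}.$$

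No single step is a genuine obstacle; the only item requiring care is the bookkeeping in the moment computation and recognizing the correct algebraic grouping $\tfrac{1}{2}\|m\|^{2}-\tfrac{1}{2}m^{T}Mm=\tfrac{1}{2}m^{T}(I-M)m$. As an aside, equality in the bound forces $M=\tfrac{1}{d}I$ (after which the first summand vanishes automatically), so any $\nu$ whose first and second moments agree with those of $\sigma$ --- for instance the normalized counting measure on the vertices of a regular cross-polytope --- is also a maximizer, and $\sigma$ is in general \emph{not} the unique one.
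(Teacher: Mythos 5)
Your proof is correct and is essentially the paper's argument in moment-matrix notation: your inequality $m^{T}(I-M)m\geq 0$ is exactly the paper's Cauchy--Schwarz step $I_{uv}(\mu)\leq I_{u}(\mu)$ controlling the middle terms of \eqref{eq:asq}, and $\|M\|_{F}^{2}\geq \tfrac{1}{d}$ is exactly the frame-energy bound \eqref{eq:asq2b}. The explicit identification of the equality cases ($M=\tfrac{1}{d}I$, so $\sigma$ is not the unique maximizer) is a nice addition not present in the paper's proof.
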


\begin{proof}
Fix an arbitrary measure  $\mu \in \mathbb{P} (\mathbb{S}^{d-1})$. Observe that 
\begin{equation}
I_{u  }  (\mu)  =  \int_{\mathbb{S}^{d-1}} \int_{\mathbb{S}^{d-1}}  \langle x,y \rangle   d \mu(x) d \mu(y)   = \left| \left|  \int_{\mathbb{S}^{d-1}} x \; \mu(x) \right| \right|^2 . 
\end{equation}
Furthermore, applying  the Cauchy--Schwarz inequality,  we obtain 
\begin{align}
\nonumber I_{uv} (\mu)  & = \int_{\mathbb{S}^{d-1}} \int_{\mathbb{S}^{d-1}} \langle x,y \rangle  \langle  z,x \rangle d \mu(x) d \mu(y)  d\mu (z) =  \int_{\mathbb{S}^{d-1}}  \left\langle x,\int_{\mathbb{S}^{d-1}} y d\mu (y) \right\rangle^2 d\mu (x) \\
\label{eq:asq2a} & \le \int_{\mathbb{S}^{d-1}}  \|   x\| ^2  \cdot \left\| \int_{\mathbb{S}^{d-1}}  y d\mu (y) \right\|^2 d \mu (x) =   \left\| \int_{\mathbb{S}^{d-1}} y d\mu (y) \right\|^2 =  I_{u } (\mu).
\end{align}
This inequality  implies that the contribution of the  two middle terms in the representation \eqref{eq:asq}  is non-positive, i.e. $I_{\frac12(uv+vt+tu) - \frac12 (u +v  +t   ) } (\mu) \le 0$. 
 Finally, we have  a well-known estimate 
\begin{align}
\label{eq:asq2b} I_{u^2 } (\mu) &  =   \int_{\mathbb{S}^{d-1}} \int_{\mathbb{S}^{d-1}} \langle x, y \rangle^2 d \mu(x) d \mu(y) \geq \frac{1}{d}.
\end{align}
 The two-input energy appearing above is known as the {\it{frame energy}}.  Its discrete version was  introduced  in \cite{BF}  in connection to  {\it{finite unit norm tight frames}} (FUNTF's), for the continuous analogue, see e.g.  \cite{BM}.  Putting  it all together,  we find that
\begin{align*}
I_{A^2} (\mu) & \le \frac{3}{4} - \frac14 I_{u^2 +v^2 +t^2} (\mu) \le \frac34 - \frac{3}{4d} = \frac34 \frac{d-1}{d},
\end{align*}
and it is easy to check that  equality holds if $\mu = \sigma$. 
\end{proof}

Numerous generalizations and refinements of Theorems \ref{thm:vol squared max} and  \ref{thm: triangle area squared max} (including  characterizations of minimizers) can be obtained. An in-depth discussion of such geometric  problems can be found in our follow-up paper \cite{BFGMPV}.

\subsubsection{Lack of $3$-positive definiteness.}
It now remains to show that the kernels $ -V^2$ and $ -A^2$  are not conditionally $3$-positive definite. We first recall the following lemma:
\begin{lemma}[Chp. 3, Lemma 2.1, \cite{BCR}] \label{lem:pos def relation} Let $\Omega$ be a nonempty set, $x_0\in \Omega$, $\psi: \Omega ^2\rightarrow \C$ be a Hermitian kernel, i.e. $\psi(x,y)=\overline{\psi(y,x)}$, and define
	$$\phi(x,y):=\psi(x,y)+\psi(x_0,x_0)-\psi(x,x_0)-\psi(x_0,y).$$
	Then $\phi$ is positive definite if and only if $\psi$ is conditionally positive definite. If $\psi(x_0,x_0)\leq0$ and 
	$$\phi_0(x,y):=\psi(x,y)-\psi(x,x_0)-{\psi(x_0,y)},$$
	then $\phi_0$ is positive definite if and only if $\psi$ is conditionally positive definite.
	\end{lemma}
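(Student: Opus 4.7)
The plan is to prove both equivalences through a single algebraic identity relating the quadratic form of $\phi$ over $N$ points to a quadratic form of $\psi$ over $N{+}1$ points, obtained by augmenting the data with $x_0$ carrying a weight that forces the total to vanish. Once this identity is in hand, both directions of both equivalences become short consequences.

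First I would establish the key identity. Fix arbitrary $x_1,\dots,x_N\in\Omega$ and $c_1,\dots,c_N\in\C$, set $S:=\sum_{i=1}^N c_i$, and augment the data by $x_{N+1}:=x_0$, $c_{N+1}:=-S$, so that $\sum_{i=1}^{N+1}c_i=0$. A direct expansion using the four terms defining $\phi$ yields
\begin{equation*}
\sum_{i,j=1}^{N}\phi(x_i,x_j)\,c_i\bar{c}_j \;=\; \sum_{i,j=1}^{N+1}\psi(x_i,x_j)\,c_i\bar{c}_j,
\end{equation*}
because the corner term $\psi(x_0,x_0)|S|^2$ and the cross-sums $-\bar{S}\sum_{i=1}^N\psi(x_i,x_0)c_i$ and $-S\sum_{j=1}^N\psi(x_0,x_j)\bar{c}_j$ on the right reproduce exactly the contributions of $\psi(x_0,x_0)$, $-\psi(x,x_0)$, and $-\psi(x_0,y)$ from the definition of $\phi$ on the left.

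With this identity the first equivalence is immediate. If $\psi$ is conditionally positive definite, the augmented weights sum to zero, hence the right-hand side is nonnegative for every configuration and $\phi$ is positive definite. Conversely, to verify conditional positive definiteness of $\psi$ on a configuration $(y_1,\dots,y_M;\,d_1,\dots,d_M)$ with $\sum d_k=0$, I would absorb any weight attached to $x_0$ into the ``$-S$'' slot of the identity and read it backwards, rewriting the $\psi$-sum as a $\phi$-sum on the remaining points, which is nonnegative by hypothesis.

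For the second equivalence, since $\phi_0=\phi-\psi(x_0,x_0)$, the identity rearranges to
\begin{equation*}
\sum_{i,j=1}^{N}\phi_0(x_i,x_j)\,c_i\bar{c}_j \;=\; \sum_{i,j=1}^{N+1}\psi(x_i,x_j)\,c_i\bar{c}_j \;-\;\psi(x_0,x_0)\,|S|^2.
\end{equation*}
If $\psi$ is conditionally positive definite, both terms on the right are $\ge 0$ (the second using $\psi(x_0,x_0)\le 0$), so $\phi_0$ is positive definite. Conversely, if $\phi_0$ is positive definite, restricting to coefficient vectors with $\sum c_i=0$ kills the $|S|^2$ correction and identifies the $\phi_0$-sum with the $\psi$-sum, yielding conditional positive definiteness of $\psi$; notably, the hypothesis $\psi(x_0,x_0)\le 0$ is only used in one direction. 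The main care required throughout is bookkeeping: respecting the Hermitian symmetry so that the two cross-sums pair correctly and $\psi(x_0,x_0)\in\R$, and handling CPD test configurations that already contain $x_0$ by folding the corresponding weight into the ``$-S$'' slot rather than treating it separately.
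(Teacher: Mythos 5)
The paper does not prove this lemma at all --- it is quoted verbatim from Berg--Christensen--Ressel with a citation --- so there is no internal proof to compare against. Your argument is correct and is essentially the standard proof from that reference: the augmentation identity $\sum_{i,j=1}^{N}\phi(x_i,x_j)c_i\bar{c}_j=\sum_{i,j=1}^{N+1}\psi(x_i,x_j)c_i\bar{c}_j$ with $x_{N+1}=x_0$, $c_{N+1}=-S$ checks out, both converses are handled correctly (including folding a weight already sitting at $x_0$ into the $-S$ slot, and using $\psi(x_0,x_0)\le 0$ only in the forward direction for $\phi_0$), so the proposal is a complete and correct substitute for the citation.
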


We shall now use this lemma to show that our  two geometric kernels  are not conditionally $3$-positive definite. 

\begin{proposition}\label{prop:notpd1}
Assume that $d \geq 3$, and let $V(x,y,z)$ be the volume of the parallelepiped spanned by   the vectors $x ,y, z \in \mathbb S^{d-1}$. Define the kernel $K(x,y,z) = -V^2(x,y,z)$. Then $K$ is not conditionally $3$-positive definite.
\end{proposition}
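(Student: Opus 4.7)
The plan is to exploit the rotational invariance of $K=-V^2$ to fix one of the three arguments, expand the kernel as an explicit quadratic polynomial in the remaining inner products, and then exhibit a simple signed test measure of total mass zero on which the associated two-input energy is strictly negative.

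First, by rotational invariance and Definition \ref{def:pd}, it suffices to check that the two-input kernel $K_{e_1}(x,y) = K(x,y,e_1)$ fails to be conditionally positive definite on $\mathbb{S}^{d-1}$. Setting $u=\langle x,y\rangle$, $v=y_1$, $t=x_1$ and expanding the Gram determinant $V^2$ from \eqref{eq:volsq2} produces
\[
K_{e_1}(x,y) = -1 + \langle x,y\rangle^2 + x_1^2 + y_1^2 - 2\,\langle x,y\rangle\,x_1 y_1.
\]

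Second, I would compute $I_{K_{e_1}}(\nu)$ for an arbitrary signed Borel measure $\nu$ with $\nu(\mathbb{S}^{d-1})=0$ in terms of the moments $A_{ij}(\nu):=\int_{\mathbb{S}^{d-1}} x_i x_j\, d\nu(x)$. The constant $-1$ and the $x_1^2,\,y_1^2$ terms all contribute zero because $\nu$ has zero total mass. Expanding $\langle x,y\rangle^2 = \sum_{i,j} x_i y_i x_j y_j$ and $\langle x,y\rangle x_1 y_1 = \sum_i (x_1 x_i)(y_1 y_i)$ and integrating factorwise in $x$ and $y$, one gets $\int\int \langle x,y\rangle^2\, d\nu\, d\nu = \sum_{i,j}A_{ij}^2$ and $\int\int \langle x,y\rangle x_1 y_1\, d\nu\, d\nu = \sum_i A_{1i}^2$. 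Collecting terms and using $A_{ij}=A_{ji}$, the energy telescopes to the clean identity
\[
I_{K_{e_1}}(\nu) \;=\; \sum_{i,j\geq 2} A_{ij}^2 \;-\; A_{11}^2.
\]

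Third, the task then becomes the design question of producing a mean-zero measure whose moment matrix has a large $(1,1)$-entry but only a small lower-right block; the assumption $d\geq 3$ is precisely what leaves room for this, since one needs at least two coordinate directions orthogonal to $e_1$. A convenient candidate is $\nu = 2\delta_{e_1} - \delta_{e_2} - \delta_{e_3}$, for which $\nu(\mathbb{S}^{d-1})=0$ and the only nonzero moments are $A_{11}=2$, $A_{22}=-1$, $A_{33}=-1$. Substituting into the identity yields $I_{K_{e_1}}(\nu) = 1+1-4 = -2 < 0$, contradicting conditional positive definiteness.

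The only conceptual hurdle is spotting the competition inside $K_{e_1}$: the piece $\langle x,y\rangle^2$ is positive definite, while $-2\langle x,y\rangle x_1 y_1 = -2\sum_i (x_1 x_i)(y_1 y_i)$ is negative semidefinite, and concentrating $\nu$ along the $e_1$-axis tilts the balance toward the latter. Once the compact identity $I_{K_{e_1}}(\nu)=-A_{11}^2+\sum_{i,j\geq 2}A_{ij}^2$ is in hand, writing down an explicit counterexample is essentially automatic, so no serious obstacle is anticipated.
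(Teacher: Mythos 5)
Your proof is correct. You arrive at the same reduction as the paper (fix $z=e_1$, expand $K_{e_1}(x,y)=-1+\langle x,y\rangle^2+x_1^2+y_1^2-2\langle x,y\rangle x_1y_1$, and test on a discrete measure supported on coordinate vectors), but you handle the word ``conditionally'' differently. The paper invokes Lemma \ref{lem:pos def relation} (Berg--Christensen--Ressel): it observes that $K_{e_1}$ vanishes whenever one argument equals $e_1$, so the BCR correction terms are trivial and conditional positive definiteness of $K_{e_1}$ is equivalent to plain positive definiteness, which is then refuted with the non-mean-zero measure $\nu=\delta_{e_2}+\delta_{e_3}$ (giving $I_{K_{e_1}}(\nu)=-2$). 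You instead work directly with the definition, deriving the identity $I_{K_{e_1}}(\nu)=\sum_{i,j\ge 2}A_{ij}^2-A_{11}^2$ for mean-zero $\nu$ (which checks out: $\sum_{i,j}A_{ij}^2-2\sum_iA_{1i}^2$ telescopes exactly as you claim) and then exhibiting the mean-zero measure $2\delta_{e_1}-\delta_{e_2}-\delta_{e_3}$, for which the energy is again $-2$. Your route is more self-contained, needing no external lemma, and the moment identity makes transparent both why $d\ge 3$ is required and how to cook up counterexamples; the paper's route is shorter once the BCR lemma is available and reuses that lemma for the companion result on $-A^2$ (Proposition \ref{prop:notpd2}). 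One small remark: for the negative direction you do not actually need rotational invariance --- failure of conditional positive definiteness at a single $z$ already disproves conditional $3$-positive definiteness --- so that appeal, while harmless, is superfluous.
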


\begin{proof}
Using the representation    $V^2(x,y,z)   =1-u^2-v^2-t^2+2uvt$ and fixing $z = e_1$, we find that $K_{e_1}(x,y) = u^2 + y_1^2 + x_1^2 - 2u x_1 y_1 - 1$. It is easy to check that $ K_{e_1}(e_1,e_1) =  K_{e_1}(e_1,y) = K_{e_1}(x,e_1) =0$ and hence 
\begin{equation}\label{eq:VolNotCPD}
K_{e_1}(x,y) + K_{e_1}(e_1,e_1) -  K_{e_1}(e_1,y) - K_{e_1}(x,e_1) = K_{e_1}(x,y).
\end{equation}
Taking  $\nu =   \delta_{e_2} + \delta_{e_3}  $, one can compute
\begin{equation*}
I_{K_{e_1}} (\nu)  = - 2 < 0,
\end{equation*}
i.e. $K_{e_1}$ is not positive definite. Lemma \ref{lem:pos def relation}  and \eqref{eq:VolNotCPD} then tell us that $K_{e_1}$ is not conditionally positive definite and thus $K$ is not conditionally $3$-positive definite. 
\end{proof}

We now turn to area squared of a triangle and prove an analogous statement. 

\begin{proposition}\label{prop:notpd2}
Assume that $d\ge 2$. Let $A(x,y,z)$ be the area of the triangle with vertices at $x,y ,z \subset \mathbb S^{d-1}$ and set $K(x,y,z) = - A^2(x,y,z) $. Then $K$ is not conditionally $3$-positive definite.
\end{proposition}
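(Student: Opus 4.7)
The plan is to mirror the argument used for Proposition~\ref{prop:notpd1}, i.e., to fix $z = e_1 \in \mathbb{S}^{d-1}$ and show that the resulting two-input section $K_{e_1}(x,y) = -A^2(x,y,e_1)$ fails to be conditionally positive definite on the sphere.

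First I would substitute $z = e_1$ into the explicit expression \eqref{eq:asq} for $A^2(x,y,z)$: since $v = y_1$ and $t = x_1$ in this case, this yields a concrete polynomial in $u = \langle x,y \rangle$, $x_1$, and $y_1$ for $K_{e_1}(x,y)$. The key geometric observation to record next is that any triangle with two coincident vertices has zero area, so
\begin{equation*}
K_{e_1}(e_1, e_1) \;=\; K_{e_1}(x, e_1) \;=\; K_{e_1}(e_1, y) \;=\; 0
\end{equation*}
for every $x, y \in \mathbb{S}^{d-1}$ (this can equally well be verified by plugging the appropriate values into the polynomial identity above). Consequently, with $\psi = K_{e_1}$ and $x_0 = e_1$, the kernel $\phi$ from Lemma~\ref{lem:pos def relation} coincides with $K_{e_1}$ itself, so by that lemma it suffices to check that $K_{e_1}$ is not positive definite as a two-input kernel.

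For this last step I would exhibit the (unsigned) measure $\nu = \delta_{e_2} + \delta_{-e_2} \in \mathcal{M}(\mathbb{S}^{d-1})$, which is available for all $d \geq 2$. The diagonal contributions $K_{e_1}(\pm e_2, \pm e_2)$ both vanish (degenerate triangles again), while a direct substitution into \eqref{eq:asq} with $u = -1$, $v = t = 0$ yields $A^2(e_2, -e_2, e_1) = 1$, a geometrically obvious value since this is an isoceles triangle with base $2$ and height $1$. Hence $I_{K_{e_1}}(\nu) = -2 < 0$, so $K_{e_1}$ is not positive definite, and by the reduction above it is not conditionally positive definite either. By Definition~\ref{def:pd}, $K = -A^2$ is therefore not conditionally $3$-positive definite.

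The computations involved are routine; the only real subtlety is noticing that, thanks to the degenerate-triangle identities $K_{e_1}(x, e_1) = K_{e_1}(e_1, y) = K_{e_1}(e_1,e_1) = 0$, the transfer between positive and conditional positive definiteness supplied by Lemma~\ref{lem:pos def relation} is essentially free, so there is no need to construct a signed measure by hand. Of course, one could alternatively bypass the lemma entirely by testing conditional positive definiteness directly against $\mu = \delta_{e_2} + \delta_{-e_2} - 2\delta_{e_1}$, for which the same computation shows $I_{K_{e_1}}(\mu) = -2 < 0$.
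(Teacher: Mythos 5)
Your proof is correct and follows essentially the same route as the paper: fix $z=e_1$, use the degenerate-triangle identities $K_{e_1}(x,e_1)=K_{e_1}(e_1,y)=K_{e_1}(e_1,e_1)=0$ so that Lemma~\ref{lem:pos def relation} reduces conditional positive definiteness to positive definiteness, and then exhibit a nonnegative measure with negative energy. The only (immaterial) difference is your choice of test measure $\delta_{e_2}+\delta_{-e_2}$ versus the paper's $\delta_{e_2}+\delta_{-e_1}$; both yield $I_{K_{e_1}}(\nu)=-2<0$.
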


\begin{proof}
As computed in \eqref{eq:asq},  
$$A^2(x,y,z) = \frac34  -\frac12 (u+v+t) + \frac12 (uv + vt +tu) - \frac{1}4 ( u^2 + v^2 +t^2).$$ Fixing $z = e_1$, we find that 
$$ 4K_{e_1}(x,y) = u^2  + x_1^2 + y_1^2  + 2u + 2x_1 + 2 y_1  - 2 x_1 y_1 - 2 u x_1 - 2 u y_1 - 3.$$
The rest of the argument  almost repeats the proof of  Proposition \ref{prop:notpd1}: we  have that
\begin{equation}\label{eq:AreaNotCPD}
K_{e_1}(x,y) + K_{e_1}(e_1,e_1) -  K_{e_1}(e_1,y) - K_{e_1}(x,e_1) = K_{e_1}(x,y),
\end{equation}
as well as 
\begin{equation*}
I_{K_{e_1}} ( \delta_{e_2} + \delta_{-e_1} ) = - 2 < 0,
\end{equation*}
and an application of Lemma \ref{lem:pos def relation}  finishes the proof. 
\end{proof}


\begin{thebibliography}{10}

\bibitem[AS]{AS} R. Alexander, K. Stolarsky. \emph{Extremal Problems of Distance Geometry Related to Energy Integrals.} Transactions of AMS,  \textbf{193}, 1--31   (1974).

\bibitem[AT]{AT} B. M. Axilrod, E. Teller. \emph{Interaction of the van der Waals Type Between Three Atoms}. {The Journal of Chem. Physics}, \textbf{11} (6), 299--300 (1943).

\bibitem[BV]{BV} C. Bachoc, F. Vallentin. \emph{New Upper Bounds for Kissing Numbers from Semidefinite Programming.} Journal of the American Mathematical Society, \textbf{21}(3), 909--924 (2008).



\bibitem[BF]{BF} J. Benedetto, M. Fickus. \emph{Finite normalized tight frames}. {Advances in Computational Mathematics}, \textbf{18} (2-4), 357--385 (2003).

\bibitem[BCR]{BCR} C. Berg, J.P.R. Christensen,  P. Ressel. \emph{Harmonic Analysis on Semi Groups - Theory of Positive Definite and Related Functions.} Springer (1984).


\bibitem[BDM]{BDM} D. Bilyk, F. Dai, and R. Matzke. \emph{Stolarsky principle and energy optimization on the sphere.} Constructive Approximation, \textbf{48}(1), 31--60 (2018).

\bibitem[BFGMPV]{BFGMPV} D. Bilyk,  D. Ferizovi\'c, A. Glazyrin, R. Matzke, J. Park, O. Vlasiuk. {\emph{Optimal measures for multivariate geometric potentials,}} (2021), preprint. 

\bibitem[BGMPV]{BGMPV} D. Bilyk, A. Glazyrin, R. Matzke, J. Park, O. Vlasiuk. {\emph{Optimal measures for $p$-frame energies on spheres}}, (2021) to appear,  available at \url{https://arxiv.org/pdf/1908.00885}.

\bibitem[BMV]{BMV} D. Bilyk, R. Matzke, O. Vlasiuk.  {\emph{Positive definiteness and the Stolarsky invariance principle,}} (2021), preprint. 

\bibitem[BM]{BM} D. Bilyk, R. Matzke. \emph{On the Fejes T\'{o}th problem on the sum of acute angles.} {Proceedings of AMS} \textbf{147}, 51--59  (2019).


\bibitem[Bj]{Bj} G. Bj\"{o}rck. \emph{Distributions of positive mass, which maximize a certain generalized energy integral.} Arkiv F\"{o}r Matematik, \textbf{3}, 255--269 (1956).

\bibitem[BHS]{BHS} S. Borodachov, D. Hardin, and E. Saff. \emph{ Discrete Energy on Rectifiable Sets}. Springer Monographs in Mathematics (2019).

\bibitem[BMZ]{BMZ} H. P. B\"{u}chler, A. Micheli, P. Zoller. \emph{Three­-body Interactions with Cold Polar Molecules}. \textit{Nature Physics}, \textbf{3}, 726--­731 (2007).

\bibitem[CC]{CC} J. Cahill, P. G. Casazza. \emph{Optimal Parseval frames: Total coherence and total volume.} Preprint: \url{https://arxiv.org/pdf/1910.01733}

	
\bibitem[CW]{CW} H. Cohn, J. Woo. \emph{Three-Point Bounds for Energy Minimization}. Journal of the AMS, \textbf{25} (4), 929--958 (2012).

\bibitem[D]{D} G. David. {\emph{Analytic capacity, Calder\'on-Zygmund operators, and rectifiability}}. Publ. Mat. \textbf{43} (1), 3--25 (1999).



\bibitem[DMOV]{DMOV} D. de Laat, F.C. Machado, F.M. de Oliveira Filho,  F. Vallentin. \emph{$k$-point semidefinite programming bounds for equiangular lines.} Preprint: \url{https://arxiv.org/pdf/1812.06045}


\bibitem[EO]{EO} M. Ehler, K. A. Okoudjou. \emph{Minimization of the probabilistic $p$-frame potential}. J. Statist. Plann. Inference \textbf{142}, 645--659 (2012).

\bibitem[F]{F1} L. Fejes T\'{o}th. \emph{On the sum of distances determined by a point set}. {Acta Mathematica Academiae Scientiarum Hungarica}, \textbf{7}, 397--401 (1956).



\bibitem[FSch]{FSch} F. Finster, D. Schiefeneder.  {\emph{On the support of minimizers of causal variational principles}}. Arch. Ration. Mech. Anal. \textbf{210}, no. 2, 321--364 (2013).



\bibitem[FTh]{FTh} L. C. Flatley, F. Theil.  \emph{Face-Centered Cubic Crystallization in Atomistic Configurations}. {Arch. Rationl Mech. Anal.}, \textbf{218}, 363--416 (2015).  

\bibitem[Fu]{Fu} B. Fuglede.
{\emph{On the theory of potentials in locally compact spaces.}}
Acta Math. {\bf{103}}, 139--215 (1960). 


\bibitem[HS]{HS} D. P. Hardin, E. B. Saff. {\emph{Discretizing manifolds via minimum energy points}}, Notices of AMS, \textbf{51} (10), 1186--1194 (2004). 

\bibitem[L]{L} P.-L. Lions. Course notes {\emph{\'Equations aux d\'eriv\'ees partielles et applications}}, published online at: \url{https://www.college-de-france.fr/media/pierre-louis-lions/UPL40948_lions_cours0708.pdf}. 

\bibitem[MS]{MS} E. Mainini, U. Stefanelli.  {\emph{Crystallization in Carbon Nanostructures.}} {Communications in Mathematical Physics}, \textbf{328}, 545--­571 (2014).


\bibitem[MMV]{MMV} P. Mattila, M. S. Melnikov,  J. Verdera.  {\emph{The Cauchy integral, analytic capacity, and uniform rectifiability}}. Ann. of Math. (2) \textbf{144}  no. 1, 127--136 (1996).



\bibitem[Mu]{Mu1}  O.R. Musin.  \emph{Multivariate positive definite functions on spheres.}  Discrete geometry and algebraic combinatorics, Contemp. Math., \textbf{625},  177--190, AMS, Providence (2014). 


\bibitem[Q]{Q}   L. Qi.  \emph{Eigenvalues of a real supersymmetric tensor.} Journal of Symbolic Computation, {\textbf{40}}, 1302--1324 (2005).

\bibitem[R]{R} R.A. Rankin. \emph{On the Minimal Points of Positive Definite Quadratic Forms}. Mathematika, \textbf{3}(1), 15--24 (1956).


\bibitem[Ro]{Ro} Romeo, math.stackexchange post: \url{https://math.stackexchange.com/questions/3114400/} (2019)



\bibitem[Sa]{Sa}  F. Santambrogio.  \emph{Optimal transport for applied mathematicians. Calculus of variations, PDEs, and modeling.} Progress in Nonlinear Differential Equations and their Applications \textbf{87}. Birkh\"auser/Springer, Cham (2015).

\bibitem[SG]{SG} A. J. Scott,  M. Grassl. {\emph{Symmetric informationally complete positive operator-valued measures: a new computer study}}.  J. Math. Phys. \textbf{51} (2010)

\bibitem[S]{S} I.J. Schoenberg. \emph{Positive definite functions on spheres}. Duke Mathematical Journal, \textbf{9}, 96--108 (1941).

\bibitem[Sk]{Sk} M. Skriganov. \emph{Stolarsky's invariance principle for projective spaces.} {Journal of Complexity}, \textbf{56}, 101428 (2020).

\bibitem[StW]{StW} F. H. Stillinger, T. A. Weber. \emph{Computer simulation of local order in condensed phases of silicon.} {Physical Review B}, \textbf{31}, 5262--5271 (1985).

\bibitem[St]{St} K. B. Stolarsky. \emph{Sums of distances between points on a sphere. II.}, {Proceedings of the American Mathematical Society}, \textbf{41}, 575--582 (1973).

\bibitem[Th]{Th} {\emph{Thomson problem}}, Wikipedia. \url{https://en.wikipedia.org/wiki/Thomson_problem}

\bibitem[T]{T} S. Torquato. {\emph{Reformulation of the covering and quantizer problems as ground states of interacting particles}}. Phys. Rev. E \textbf{82}, 056109 (2010).

\bibitem[Ze]{Ze} V.G. Zelevinsky. \emph{Three-­Body Forces and Many­-Body Dynamics.} {Physics of Atomic Nuclei}, \textbf{72}, 1107­--1115 (2009).

\bibitem[ZST]{ZST} G. Zhang, F. H. Stillinger, S. Torquato. \emph{The Perfect Glass Paradigm: Disordered Hyperuniform Glasses Down to Absolute Zero.} {Scientific Reports}, \textbf{6}, no. 36963 (2016).

\end{thebibliography}
\end{document}